\newtheorem{theorem}{Theorem}[section]
\newtheorem{lemma}{Lemma}[section]
\newtheorem{definition}{Definition}[section]
\newtheorem{proposition}{Proposition}[section]
\newtheorem{corollary}{Corollary}[section]
\newtheorem{remark}{Remark}[section]
\newtheorem{claim}{Claim}
\newtheorem*{mainthm}{Main Theorem}
\newtheorem*{thmA}{Theorem A}
\newtheorem*{thmB'}{Theorem B'}
\newtheorem*{thmB''}{Theorem B''}
\numberwithin{equation}{section}
\newcommand{\R}{\mathbb{R}}
\newcommand{\Z}{\mathbb{Z}}
\newcommand{\eps}{\varepsilon}
\title{A Dichotomy for the dimension of solenoidal attractors on high dimensional space}
\author{Haojie Ren}
\date{\today}
\address{School of Mathematical Sciences, Fudan University, No 220 Handan Road, Shanghai, China 200433}
\email{20110180012@fudan.edu.cn}
\begin{document}
\setlength{\parindent}{1em}

\maketitle

\begin{abstract}
We study dynamical systems generated by skew products:
$$T: [0,1)\times\mathbb{C}\to [0,1)\times\mathbb{C}
\quad\quad T(x,y)=(bx\mod1,\gamma y+\phi(x))$$
where  integer $b\ge2$, $\gamma\in\mathbb{C}$ such that $0<|\gamma|<1$, and $\phi$ is a real analytic $\mathbb{Z}$-periodic function. 
Let $\Delta\in[0,1) $ such that $\gamma=|\gamma|e^{2\pi i\Delta}$.
For the case $\Delta\notin\mathbb{Q}$
we prove the following  dichotomy for the solenoidal attractor $K^{\phi}_{b,\,\gamma}$ for $T$: Either $K^{\phi}_{b,\,\gamma}$ is a graph of  real analytic function, or the Hausdorff dimension of $K^{\phi}_{b,\,\gamma}$ is  equal to $\min\{3,1+\frac{\log b}{\log1/|\gamma|}\}$. Furthermore, given $b$ and $\phi$, the former alternative only  happens for countable many $\gamma$ unless $\phi$ is constant.
\end{abstract}

\section{introduction}
One of main purposes of dynamical system is to study the strange attractors. In this paper, we concern the Hausdorff dimension of solenoidal attractors on skew products.

For any positive integer $d$, let $\mathbf{B}_d:=\{\,z\in\mathbb{R}^d:\,|z|\le 1\}.$
For any functions $f:[0,1)\to[0,1)$ and $g:[0,1)\times\mathbf{B}_d\to\mathbf{B}_d$, define the map 
$$F:\,[0,1)\times\mathbf{B}_d\to[0,1)\times\mathbf{B}_d\quad\quad F(x,\,y)=\big(f(x),\,g(x,y)\big).$$
Thus 
\begin{equation}\label{def:solenoid}
K_{F}:=\bigcap_{n=0}^{\infty} F^n\big(\,[0,1) \times\mathbf{B}_d\,\big)
\end{equation}
is an invariant set and called {\em solenoidal attractor} or  {\em solenoid} for $F$. Denote $\text{dim}_H(K_F)$ as the Hausdorff dimension of $K_F$.

{\bf Historical remarks.}
The study of solenoidal attractors on skew products has long history. For the case $d=1$,
Alexander and Yorke  \cite{Alexander1984fat} introduced a class maps called generalized baker's transformation:
\begin{equation}\nonumber
B: [-1,1]\times[-1,1]\circlearrowleft
\quad\quad B(x,y)=\begin{cases}
(2x-1,\gamma y+(1-\gamma))&\, x\ge0\\
(2x+1,\gamma y-(1-\gamma))&\, x<0.
\end{cases}
\end{equation}
Solomyak \cite{Solo, PS} proved $\text{dim}_H(K_B)=2$ for Lebesgue-a.e. $\gamma\in(1/2,1] $. Later Hochman \cite{hochman2014self} showed the packing dimension of the exception set is zero and for every $\gamma$ transcendental in Varj\'u \cite{Varju1}. See \cite{Varju2} for some $\gamma$ algebraic.

  To generalize $B$ to the nonlinear case, Tsujii \cite{tsujii2001fat} introduced dynamical systems generated by maps:
\begin{equation}\label{Map}
\tilde{T}: [0,1)\times\mathbb{R}\to [0,1)\times\mathbb{R}
\quad\quad \tilde{T}(x,y)=(bx\mod1,\gamma y+\phi(x))
\end{equation}
where  integer $b\ge2$, $0<\gamma<1$ and $\phi$ is a real $\mathbb{Z}$-periodic Lipschitz function. ( Note that  $\tilde{T}\big(\,[0,1)\times\mathbf{B}(0,\,\frac{\parallel\phi\parallel_{\infty}}{1-\gamma}\,)\,\big)\subset [0,1)\times\mathbf{B}(0,\,\frac{\parallel\phi\parallel_{\infty}}{1-\gamma}\,)$, thus we can define the solenoids $K_{\tilde{T}}$ by (\ref{def:solenoid})).
For $\gamma\in(1/b,1)$,
Tsujii  \cite{tsujii2001fat} proved that $\text{dim}(K_{\hat{T}})=2$ for $C^2$ generic $\phi$.
Later the author \cite{Ren2022} gave a complete answer for all $\gamma\in(0,1)$ when $\phi$ is a real analytic $\mathbb{Z}$-periodic function. See e.g. \cite{barany2019hausdorff, shen2018hausdorff, Zhang2018, Rams} for results about the dimension of solenoids when $d=1$.

For the case $d=2$, studying the dimension of solenoids got harder.
A well-known example is Smale-Williams-type solenoids generated by the map
\begin{equation}\nonumber
Q: [0,1)\times\mathbb{R}^2\circlearrowleft
\quad\quad Q(x,y)=(bx\mod1,\gamma y+\phi(x), \lambda y+\psi(x))
\end{equation}
where  integer $b\ge2$, $0<\gamma,\,\lambda<1$ and $\phi,\,\psi$ are real $\mathbb{Z}$-periodic Lipschitz functions. In particular $K_Q$ is called {\em Smale-Williams solenoid } if $\phi(x)=\cos(2\pi x)$, $\psi(x)=\sin(2\pi x)$ and $b=2$.
When  $\gamma,\,\lambda<1/b,$  $K_Q$ is called {\em thin solenoid}. \cite{Schmeling, Simon} gave the positive answers for the dimension of thin Smale-Williams solenoids. Later Simon \cite{Simon2} calculated the the Hausdorff dimension of thin Smale-Williams solenoids when $Q$ is one-to-one map. Recently  Mohammadpour et al.\cite{Przytycki} consider a  general class of  functions $Q'$. Under the transversality assumption, \cite{Przytycki} calculated the the Hausdorff dimension of thin solenoids for $Q'$ when $Q'$ is injective on $K_{Q'}$. More results about the Hausdorff dimension of solenoids can be found in \cite{Silva,Simon2,Mihailescu, Rams2,Rams3}.

{\bf Main findings.} It is well-known that transversality is vital to study the dimension of solenoids. One of our goals in this paper is to find  ways to  remove the transversality assumption
 for the case $d\ge2$. Another goal is to explore ways to use the methods of additive combination\cite{hochman2014self,hochman2021,barany2019hausdorff,Hochman2022} to study the dimension of strange attractors for nonlinear maps as \cite{ren2021dichotomy,Ren2022} did.
  Thus  we 
  introduce the following  nonlinear maps 
 \begin{equation}\label{MapT}
 T: [0,1)\times\mathbb{C}\to [0,1)\times\mathbb{C}
 \quad\quad T(x,y)=(bx\mod1,\gamma y+\phi(x))
 \end{equation}
 where  integer $b\ge2$, $\gamma\in\mathbb{C}$ such that $0<|\gamma|<1$, and $\phi$ is a real $\mathbb{Z}$-periodic Lipschitz function.
  Note that map $T$ is a generalization of $\tilde{T}$, see (\ref{Map}).  Let $K^{\phi}_{b,\,\gamma} $ be the solenoid for $T$.
  For any $\gamma\in\mathbb{C}\setminus\{0\}$, let $\Delta=\Delta(\gamma)\in[0,1)$ be the unique number such that
 \begin{equation}\label{def:Delta}
 \gamma=|\gamma|e^{2\pi i\Delta(\gamma)}.
 \end{equation}
 
 The main result of this paper is following.
 \begin{mainthm} Let $b\ge 2$ be an integer, $\gamma\in\mathbb{C}$ such that $0<|\gamma|<1$. Let $\phi$ be a $\Z$-periodic real analytic function. Then exactly
 	one of the following holds:
 	\begin{enumerate}
 		\item [(i)] $K^{\phi}_{b,\gamma}$ is a graph of a real analytic function;
 		\item [(ii)] $dim_H(K^{\phi}_{b,\gamma})=\min\{3,1+\frac{\log b}{\log1/|\gamma|}\}$
 		when $\Delta\notin\mathbb{Q}$;
 		\item [(iii)] $dim_H(K^{\phi}_{b,\gamma})\ge\min\{2,1+\frac{\log b}{\log1/|\gamma|}\}$
 		when $\Delta\in\mathbb{Q}$.
 	\end{enumerate}
 	Moreover given $b$ and  non-constant $\phi$, the first alternative only holds for countable many $0<|\gamma|<1$.
 \end{mainthm}
\medskip{\bf Organization.} 
In Sect. \ref{ProveM}, we will introduce Theorem A  which is used for proving Main Theorem. The rest parts of this paper is devoted to prove Theorem A.
 In Sect. \ref{pre},
 we will introduce the classical Ledrappier-Young theory for the map $T$, and recall  some basic properties of entropy that serve as the basic tools in this paper.
The main difficulty in proving Theorem A is proving Theorem \ref{thm:entropygrowth}, since the rest parts can be proved by the similar method of \cite{Ren2022} (See Sect.\ref{Theorem A} for explanations). Thus we shall show the dimension conservation property of $m_x$
in Sect. \ref{DC}, which is used to prove Theorem \ref{thm:entropygrowth} in Sect. \ref{sec:inverse}. Finally we will use Theorem \ref{thm:entropygrowth} to give a  sketchy proof for Theorem A in Sect. \ref{sec:proveA}.

\medskip
{\bf Acknowledgment.} We would like to thank  Feliks Przytycki for useful communication.

\section{Theorem A and proof of Main Theorem }\label{ProveM}
In this section, we shall first give the expression of solenoidal attractors $K^{\phi}_{b,\,\gamma}$. Later we will introduce Theorem A and   give some explanations about the ideas of  proof,  which is our main finding in this paper. 
Finally we will use Theorem A to prove the Main Theorem as we did in \cite{Ren2022}.
\subsection{Expression of $K^{\phi}_{b,\,\gamma}$}
Let $\mathbb{Z}_+$ denote the set of positive integers and $\mathbb{R}_+$ denote the set of nonnegative real numbers.
Let $\mathbb{N}$ be the set of nonnegative integers.
Let $\varLambda=\{ 0,1,...,b-1 \}$,
$\varLambda^{\#}=\bigcup_{n=1}^{\infty} \varLambda ^n$,
$\Sigma=\varLambda^{\mathbb{Z}_+}$. For any word $\textbf{j}\in \varLambda^{\#}\cup\Sigma $ let $|\textbf{j}|=t$ if $\textbf{j}=j_1j_2\ldots j_t\in \varLambda^{\#}$, and $|\textbf{j}|=\infty$ if $\textbf{j}\in\Sigma$. 
 For any 
 $x\in [0,1]$ define function
\begin{equation}\label{S}
S(x,\textbf{j})=S_{b,\,\gamma}^\phi(x,\textbf{j})=\sum\limits_{n=1}^{|\textbf{j}|}{\gamma^{n-1}\phi\left(\frac x{b^n}+\frac{j_1}{b^n}+\frac{j_2}{b^{n-1}} + \cdot \cdot \cdot + \frac{j_n}b\right)},
\end{equation}
thus we have
 $$K^{\phi}_{b,\,\gamma}=\bigg\{(x, S(x,\textbf{j})):x\in [0,1),\,\textbf{j}\in\Sigma\bigg\}.$$
 See \cite{tsujii2001fat} for details, the proof in \cite[ Sect. 2]{tsujii2001fat} works for all $\gamma\in\mathbb{C}$ such that $|\gamma|\in(0,1)$.
 
 It is necessary to recall the following formulas from \cite{tsujii2001fat}, which serve as the  basic tools in our paper.
  For any $x\in [0,1]$, $\textbf{w}=w_1w_2\ldots w_m\in\varLambda^{\#}$ and each $\textbf{i},\,\textbf{j}\in\varLambda^{\#}\cup\Sigma,$ 
  let 
  \begin{equation}\label{eq:symbolfunction}
  \textbf{w}(x):=\frac{x+w_1+\ldots+w_{m}b^{m-1}}{b^m}
  \end{equation}
  and 
  \begin{equation}\label{eq:symbolsum}
  \textbf{w}\textbf{i}=w_1\ldots w_m i_1i_2\ldots
  \end{equation}
   as usual, so we have
 \begin{equation}\label{F}
 S(x,\textbf{w}\textbf{i})=S(x,\textbf{w})+\gamma^{m}S(\textbf{w}(x),\textbf{i})
 \end{equation}
 by the definition of function $S(\cdot,\cdot)$.
 Therefore we have
 \begin{equation}\label{FM}
 S(x,\,\textbf{w}\textbf{i})-S(x,\,\textbf{w}\textbf{j})=\gamma^m\bigg(S(\textbf{w}(x),\,\textbf{i})-S(\textbf{w}(x),\,\textbf{j})\bigg).
 \end{equation}
\subsection{Theorem A}\label{Theorem A}
Let $\nu$ denote even distributed probability  measure on $\varLambda$ and let $\nu^{\mathbb{Z}_+}$ be product measure on $\Sigma.$  Denote the Lebesgue measure on $[0,1)$ as $\mathcal{m}$. Define the probability measure on $[0,1)\times\mathbb{C}$
$$\omega:=G( \mathcal{m} \times\nu^{\mathbb{Z}_+})$$
where the map
\begin{equation}\nonumber
G : [0,1)\times\Sigma\to [0,1)\times\mathbb{C}\quad\quad G(x,\textbf{j})=(x, S(x,\textbf{j})).
\end{equation}
In fact the measure $\omega$ is the SRB measure for the map $T$.
For every  $x\in [0,1]$ define the map
\begin{equation}\label{S_x}
S_x: \Sigma\to\mathbb{C}\quad\quad S_x(\,\textbf{j}\,)=S(x,\textbf{j})
\end{equation}
and  measure
\begin{equation}\label{def:m_x}
m_x:=S_x(\nu^{\mathbb{Z}_+})
\end{equation}
 as in \cite{tsujii2001fat, Ren2022}.
Note that $\omega=\int_{[0,1) }\big(\delta_x\times m_x\big)\,dx.$
 For any $n\ge1$, we have
 \begin{equation}\label{FundementalFormular}
m_x=\frac1{b^n}\sum_{\textbf{j}\in\varLambda^n} T^n(m_{\textbf{j}(x)})
\end{equation}
where probability measures
\begin{equation}\label{T^nm_x}
T^n(m_{\textbf{j}\,(x)})=f_{x,\,\textbf{j}}(m_{\textbf{j}(x)})
\end{equation}
and $f_{x,\,\textbf{j}}(y)=\gamma^ny+S(x,\,\textbf{j}),\,\forall\, y\in\mathbb{C}$. See  \cite[Section 2]{tsujii2001fat} for details.

A probability measure  $\mu$ in a metric space $X$ is called {\em exact-dimensional} if there exists a constant $\kappa\ge 0$ such that for $\mu-\text{a.e.}$ $x$,
\begin{equation}
\lim_{r\to0}\frac{\log\mu\big(\mathbf{B}(x,r)\big) } {\log r}=\kappa.
\end{equation} 
In this situation, we write $\text{dim}(\mu)=\kappa$ and call it {\em the dimension of $\mu$}.

To study the dimension of the SRB measure $\omega$ as \cite{Ren2022} did, we shall recall the following gentle  transversality condition (H).
  Shen and the author \cite{ren2021dichotomy}  put it and studied the case $\gamma\in(1/b,1)$.  Recently Gao and Shen \cite{gao2022} studied the case $\gamma\in\mathbb{C}$ such that $0<|\gamma|<1$.
\begin{definition}
	Given an integer $b\ge 2$ and $\gamma\in\mathbb{C}$ such that $0<|\gamma|<1$,	we say that a real $\mathbb{Z}$-periodic $C^1$ function $\phi(x)$ satisfies 
	the condition (H) if 
	$$S(x,\textbf{j})-S(x,\textbf{i}) \not\equiv 0, \quad \forall \, \textbf{j} \neq \textbf{i} \in \Sigma.$$
\end{definition}
The following theorem is our main findings in this paper.
\begin{thmA}
	Let $\phi(x)$ be  a real analytic $\mathbb{Z}$-periodic function  satisfies the condition (H) for an integer $b\ge 2$,  $\gamma\in\mathbb{C}$ such that $0<|\gamma|<1$.
	If $\Delta\in\mathbb{R}\setminus\mathbb{Q}$,
	 then
	$$dim (\omega)=\min\{1+\frac{\log b}{\log1/|\gamma|},3\}.$$
\end{thmA}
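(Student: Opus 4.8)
The plan is to follow the Ledrappier–Young strategy that succeeded in \cite{Ren2022} for the real case, adapted to the complex setting where the fibre is $\mathbb{C}\cong\R^2$ rather than $\R$. Write $\alpha=\frac{\log b}{\log 1/|\gamma|}$. By the Ledrappier–Young formula for the skew product $T$ (to be recalled in Sect. \ref{pre}), the dimension of $\omega$ decomposes as $\dim(\omega)=1+\dim(m_x)$ for a.e.\ $x$, where $1$ is the entropy-to-Lyapunov ratio in the base direction (the base map $x\mapsto bx$ has entropy $\log b$ and expansion $\log b$) and $\dim(m_x)$ is the exact dimension of the conditional (fibre) measure. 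Since $T$ contracts the fibre by the constant factor $|\gamma|$ in every direction and the "driving" entropy available is $\log b$, the Ledrappier–Young upper bound gives $\dim(m_x)\le\min\{2,\alpha\}$ automatically, hence $\dim(\omega)\le\min\{3,1+\alpha\}$. So the whole content is the matching lower bound $\dim(m_x)\ge\min\{2,\alpha\}$.

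The key step for the lower bound is the entropy growth / inverse-theorem machinery encapsulated in Theorem \ref{thm:entropygrowth}, which the excerpt explicitly flags as the main difficulty; granting it, the argument is structured as follows. First, using the self-similarity relation (\ref{FundementalFormular})–(\ref{T^nm_x}), one realizes $m_x$ as (a limit of) convolution-type combinations of rescaled copies of $m_{\mathbf j(x)}$ under the similarities $f_{x,\mathbf j}(y)=\gamma^n y+S(x,\mathbf j)$. Because $\Delta\notin\mathbb{Q}$, the rotation parts $e^{2\pi i n\Delta}$ of $\gamma^n$ equidistribute on the circle, which is exactly what prevents $m_x$ from concentrating on a line or a real-analytic curve and forces genuinely two-dimensional spreading — this is where the irrationality hypothesis is used and where the complex case genuinely departs from the real one. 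Second, one runs Hochman-type entropy-increase: if $\dim(m_x)<\min\{2,\alpha\}$ then the $n$-scale entropies $\frac1n H(m_x,\mathcal D_n)$ fail to grow at the rate forced by $b^n$ copies contracted by $|\gamma|^n$ unless the pieces $f_{x,\mathbf j}(m_{\mathbf j(x)})$ exhibit near-exact overlaps; the inverse theorem then yields approximate algebraic resonances among the translations $S(x,\mathbf j)-S(x,\mathbf i)$ across a positive-density set of scales. Third — and this is the bridge supplied by Sect. \ref{DC} — the dimension conservation property of $m_x$ (dimension conservation along the fibration of $m_x$ by the projection onto, say, the "angular" or a suitable linear coordinate) upgrades these approximate resonances at many scales to an exact functional identity, contradicting condition (H), i.e.\ contradicting $S(x,\mathbf j)-S(x,\mathbf i)\not\equiv 0$. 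The real-analyticity of $\phi$ is used here: it makes $x\mapsto S(x,\mathbf j)-S(x,\mathbf i)$ real-analytic, so that "vanishing on a positive-measure set of $x$" forces "$\equiv 0$", converting a statement that holds for a.e.\ $x$ into an identity.

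Concretely I would carry out the steps in this order: (1) state the Ledrappier–Young decomposition $\dim(\omega)=1+\dim(m_x)$ and derive the upper bound $\dim(m_x)\le\min\{2,\alpha\}$ from the constant fibre contraction, reducing Theorem A to the lower bound; (2) reduce the lower bound to Theorem \ref{thm:entropygrowth} by the standard argument that an exact-dimensional self-similar-type measure which is not of full local dimension must have an entropy gap at a positive proportion of scales, i.e.\ show $\frac1n H(m_x,\mathcal D_{\lceil n\log(1/|\gamma|)/\log b\rceil}) $ tends to $\min\{2,\log b/\log(1/|\gamma|)\cdot\text{(something)}\}$ unless Theorem \ref{thm:entropygrowth}'s hypothesis is violated; (3) invoke Theorem \ref{thm:entropygrowth} to conclude. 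The bulk of the paper being the proof of Theorem \ref{thm:entropygrowth}, for Theorem A itself I would present only this reduction, mirroring \cite[final section]{Ren2022} and pointing to Sect. \ref{sec:proveA} for the "sketchy proof."

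The main obstacle, as the author already warns, is Theorem \ref{thm:entropygrowth}: proving an entropy-increase statement for $m_x$ without a transversality assumption, in two fibre dimensions, where the pieces can overlap in complicated ways and where the rotational components $\gamma^n$ must be exploited quantitatively. The supporting obstacle is establishing dimension conservation for $m_x$ (Sect. \ref{DC}) in enough generality to feed the inverse theorem — in the plane this is more delicate than on the line because one must control the conditional measures along a non-trivial one-dimensional foliation and argue that they, too, behave well under the dynamics. Once those two ingredients are in hand, the deduction of Theorem A along the lines above is routine.
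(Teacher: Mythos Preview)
Your high-level outline matches the paper: Ledrappier--Young (Theorem \ref{TheoremB}) reduces Theorem A to $\dim(m_x)\ge\min\{2,\log b/\log(1/|\gamma|)\}$, and the lower bound is obtained by contradiction using Theorem \ref{thm:entropygrowth} plus the machinery of \cite{Ren2022} extended verbatim to complex $\gamma$.

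But you misassign the role of dimension conservation, and this is a real gap. Sect.~\ref{DC} does \emph{not} ``upgrade approximate resonances to an exact functional identity contradicting (H)''; it is consumed entirely inside the proof of Theorem \ref{thm:entropygrowth}. Hochman's 2D inverse theorem (Theorem \ref{thm:hochman}) outputs subspaces $V_i\le\mathbb{C}$, and the new obstacle over the real case is ruling out $\dim V_i=1$ at a positive proportion of scales. Dimension conservation yields a constant $\beta=\dim(\pi_\theta m_x)$ independent of $(x,\theta)$ --- this is precisely where $\Delta\notin\mathbb{Q}$ enters, via ergodicity of the group extension and unique ergodicity of the circle rotation (Lemmas \ref{lem:ergodic}, \ref{lem:entropyprojection}) --- and then Corollary \ref{cor:relation} gives $\alpha<2\Rightarrow\alpha<\beta+1$, which together with Lemma \ref{lem:projectionlow} makes one-dimensional saturation incompatible with the entropy-porosity hypothesis. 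The contradiction with condition (H) comes from a \emph{separate} set of ingredients you omit: exponential separation (Theorem \ref{thmSeperation}, proved directly from analyticity and (H) via zero-counting, Corollary \ref{lemHochmanSeperation2}), transversality (Theorem \ref{thmTransversality}), and entropy porosity (Theorem \ref{thm:entporous}, needed to verify the hypothesis of Theorem \ref{thm:entropygrowth}). These are assembled exactly as in \cite[Sect.~7--8]{Ren2022}: separation produces discrete measures of definite entropy, Theorem \ref{thm:entropygrowth} then forces an entropy increment under convolution with $m_x$, and iterating contradicts $\alpha<\min\{2,\log b/\log(1/|\gamma|)\}$. Your step (3) conflates the internal mechanism of Theorem \ref{thm:entropygrowth} with the external argument that applies it; as written it would not close the proof.
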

Recall that $\Delta$ is defined by (\ref{def:Delta}).

{\bf Explanations of Theorem A.}
Inspired by the  recent works \cite{hochman2014self,barany2019hausdorff,ren2021dichotomy}, the author \cite{Ren2022} has studied Theorem A for the case $\gamma\in(0,1)$, thus it is nature to consider whether these methods can be used to study the general case $\gamma\in\mathbb{C}$. In fact,  most parts of proof in  \cite{Ren2022} also work for  the case $\gamma\in\mathbb{C}$ such that $|\gamma|\in(0,1)$. Therefore, if the inverse theorem for convolution on $\mathbb{R}$ of Hochman \cite[Theorem 2.7]{hochman2014self} also holds for all  measures in $\mathscr{P}(\mathbb{C})$, there will be little difficulty to prove Theorem A. Unfortunately that is not true, studying the entropy of convolutions on $\mathbb{R}^d$ gets more tough.

To study the selfsimilar measures on $\mathbb{R}^d$, Hochman \cite{hochman2021} generalized  the inverse theorem on $\mathbb{R}$ \cite[Theorem 2.7]{hochman2014self} to high-dimension spaces \cite[Theorem 2.8]{hochman2021}. However the latter is not easy to use directly. Thus the main difficulty in our paper is to give a  corollary of \cite[Theorem 2.8]{hochman2021}, whose condition is easier to verify in our system (See Theorem \ref{thm:entropygrowth}). 

Indeed,
The  classical Ledrappier-Young theory implies that:
 for Lebesgue-a.e. $x\in[0,1]$
$$\text{dim}(m_x)=\alpha.$$
Thus it suffice to prove $\alpha=\min\{2,\frac{\log b}{\log1/|\gamma|}\}$ (See Theorem \ref{TheoremB} for details).

As mentioned before, studying the  dimension of measures $m_x$ directly is  hard, thus we hope to understand the properties of   projection measures $\pi_{\theta}m_x$ and the conditional measures $\{\,(m_x)_{\pi_{\theta}^{-1}(z)}\}_{z\in\mathbb{R}}$, see Definition \ref{def:m}. Thanks to the methods in \cite{Feng, Falconer}, we  build a new Ledrappier-Young theory for the  nonlinear system generated by the map $T$, which establishes the connections between $\text{dim}(m_x)$ and $\text{dim}(\pi_{\theta}m_x)$,  $\text{dim}\big((m_x)_{\pi_{\theta}^{-1}(z)}\big)$ for the case $\Delta\notin\mathbb{Q}$,
see Theorem \ref{thm:DC}. Therefore inspired by the work of Hochman \cite{hochman2021},
  we can  use the skills in \cite{hochman2014self, hochman2021,barany2019hausdorff,Ren2022} to  build Theorem \ref{thm:entropygrowth} which  is a  corollary of \cite[Theorem 2.8]{hochman2021}. The rest of proofs are similar to \cite{Ren2022} and we will omit some details for convenience.
\subsection{Proof of Main Theorem}
We shall first introduce the following Theorem \ref{thm:dichotomy}, which is an immediate consequence of \cite[Theorem 2.1]{gao2022}. Note that the proof of \cite[Theorem 2.1]{Ren2022} also works for  $\gamma\in\mathbb{C}$ such that $0<|\gamma|<1$, thus we omit the proof of Theorem \ref{thm:dichotomy}.
\begin{theorem}\label{thm:dichotomy}
	Fix $b\ge 2$ integer and $\gamma\in\mathbb{C}$ such that $0<|\gamma|<1$. 
	 Assume that $\phi$ is analytic $\Z$-periodic function. Then exactly one of the following holds:
	\begin{enumerate}
		\item [(H.1)]  $K^{\phi}_{b,\,\gamma}$ is a graph of  real analytic function;
		\item [(H.2)]  $\phi$ satisfies the condition (H).
	\end{enumerate}
\end{theorem}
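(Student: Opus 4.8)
The plan is to translate the dichotomy into a Fourier uniqueness statement in the variable $x$. The two alternatives are visibly incompatible: if $K^{\phi}_{b,\gamma}$ is the graph of a function $h$, then for every $x$ the fibre $\{S(x,\textbf{j}):\textbf{j}\in\Sigma\}$ equals $\{h(x)\}$, so $S(\cdot,\textbf{i})\equiv S(\cdot,\textbf{j})$ for all $\textbf{i},\textbf{j}$ and (H) fails; conversely, if (H) holds then, taking any $\textbf{i}\neq\textbf{j}$, $S(\cdot,\textbf{i})-S(\cdot,\textbf{j})\not\equiv0$, so the fibre over some $x$ contains two points and $K^{\phi}_{b,\gamma}$ is not a graph. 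Moreover any such $h$ coincides with $S(\cdot,\textbf{0})$ (the word $000\cdots$), which is real analytic since each summand in (\ref{S}) extends holomorphically to a fixed horizontal strip about $\mathbb{R}$ and the series converges uniformly there ($\phi$ being real analytic and $\mathbb{Z}$-periodic). So the whole content is: \emph{if} (H) \emph{fails, then} $K^{\phi}_{b,\gamma}$ \emph{is a graph of a real analytic function.}

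So assume $S(\cdot,\textbf{i})\equiv S(\cdot,\textbf{j})$ for some $\textbf{i}\neq\textbf{j}$. First I would reduce to $i_1\neq j_1$: if $\textbf{w}$ is the longest common prefix, with $|\textbf{w}|=\ell\ge1$ and $\textbf{i}=\textbf{w}\textbf{i}'$, $\textbf{j}=\textbf{w}\textbf{j}'$, then (\ref{FM}) gives $S(\textbf{w}(x),\textbf{i}')\equiv S(\textbf{w}(x),\textbf{j}')$ for $x\in[0,1]$, i.e.\ $S(\cdot,\textbf{i}')-S(\cdot,\textbf{j}')$ vanishes on the non-degenerate interval $\textbf{w}([0,1])$, hence identically on $\mathbb{R}$ by real analyticity; and $\textbf{i}',\textbf{j}'$ differ at the first coordinate. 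After relabelling, $S(\cdot,\textbf{i})\equiv S(\cdot,\textbf{j})$ on $\mathbb{R}$ with $i_1=a\neq b'=j_1$. Next, the Fourier side: write $\phi=\sum_{k\in\mathbb{Z}}c_ke^{2\pi ikx}$, so that $\sum_k|c_k|<\infty$. Expanding (\ref{S}) through (\ref{eq:symbolfunction}) and regrouping by frequency (legitimate by absolute convergence), the coefficient of $e^{2\pi i\omega x}$ in $S(x,\textbf{j})$, for a $b$-adic rational $\omega$, equals
\[
e^{\,2\pi i\omega\,(j_1+j_2b+\cdots+j_{n(\omega)}b^{\,n(\omega)-1})}\cdot B(\omega),
\]
where $n(\omega)$ is the least $n\ge0$ with $b^n\omega\in\mathbb{Z}$ and $B(\omega):=\sum_{n\ge\max(1,n(\omega))}\gamma^{n-1}c_{\omega b^n}$ depends only on $\omega,\phi,\gamma$; one also checks the scaling relation $B(\omega/b^{s})=\gamma^{s}B(\omega)$ for $\omega\notin\mathbb{Z}$. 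Thus integer frequencies contribute a term independent of $\textbf{j}$, whereas at a non-integer frequency the phase genuinely involves the first $n(\omega)$ digits of $\textbf{j}$.

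Each $S(\cdot,\textbf{j})$ is a uniformly convergent sum of exponentials, hence is determined by its frequency coefficients; so $S(\cdot,\textbf{i})\equiv S(\cdot,\textbf{j})$, compared at the frequency $P/b^{n}$ ($P\in\mathbb{Z}$, $b\nmid P$, $n\ge1$) and after dividing out $\gamma^{n-1}$ via the scaling relation, forces
\[
B(P/b)\,\bigl(e^{\,2\pi i(P/b^{n})D_{n}}-1\bigr)=0,\qquad D_{n}:=(a-b')+\textstyle\sum_{m=1}^{n-1}(i_{m+1}-j_{m+1})b^{m},
\]
where $D_{n}\equiv a-b'\pmod b$ (so $D_n\not\equiv0$) and $|D_{n}|<b^{n}$. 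Hence $B(P/b)=0$ unless $b^{n}\mid PD_{n}$ for \emph{every} $n\ge1$ — which is impossible, since $0<|a-b'|<b$ gives a prime $p\mid b$ with $v_{p}(a-b')<v_{p}(b)$, whence $v_{p}(D_{n})=v_{p}(a-b')$ for all $n$ (because $b\mid D_n-(a-b')$), so $v_{p}(PD_{n})$ is bounded in $n$ while $b^{n}\mid PD_{n}$ demands $v_{p}(PD_{n})\ge n\,v_{p}(b)$. Therefore $B(P/b)=0$ for every $P$ with $b\nmid P$, and then $B(\omega)=0$ for \emph{every} non-integer $b$-adic rational $\omega$ by the scaling relation. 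Consequently $S(x,\textbf{j})=\sum_{m\in\mathbb{Z}}B(m)\,e^{2\pi imx}=:h(x)$ independently of $\textbf{j}$; the coefficients $B(m)=\sum_{n\ge1}\gamma^{n-1}c_{mb^{n}}$ inherit the exponential decay of the $c_{k}$, so $h$ is $\mathbb{Z}$-periodic and real analytic, and $K^{\phi}_{b,\gamma}$ is exactly its graph.

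The step I expect to be the real obstacle is this last passage — deducing the vanishing of \emph{all} the $B(\omega)$ from the single identity $S(\cdot,\textbf{i})\equiv S(\cdot,\textbf{j})$. A crude comparison only kills $B(\omega)$ at frequencies whose numerator is coprime to $b$; reaching the remaining (non-integer) frequencies really requires the self-similarity (\ref{FM}) of the system, concretely the descent to arbitrarily deep $b$-adic scales through $B(\omega/b^{s})=\gamma^{s}B(\omega)$ together with $D_{n}\equiv a-b'\not\equiv0\pmod b$. Alternatively one may simply invoke \cite[Theorem 2.1]{gao2022}, or transcribe the proof of \cite[Theorem 2.1]{Ren2022}; the argument above is a self-contained rendering of the phenomenon underlying them.
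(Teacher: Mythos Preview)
Your argument is correct. The paper does not actually prove this theorem: it simply invokes \cite[Theorem~2.1]{gao2022} and remarks that the proof of \cite[Theorem~2.1]{Ren2022} carries over to complex $\gamma$ --- precisely the alternative you mention in your closing sentence. What you add is a self-contained Fourier-analytic proof: expand $S(\cdot,\textbf{j})$ over $b$-adic frequencies, observe that at each non-integer frequency $\omega$ the coefficient factors as a phase depending only on the first $n(\omega)$ digits of $\textbf{j}$ times a symbol-independent $B(\omega)$, then use the identity $S(\cdot,\textbf{i})\equiv S(\cdot,\textbf{j})$ (reduced via \eqref{FM} and analyticity to $i_1\neq j_1$), the scaling $B(\omega/b^{s})=\gamma^{s}B(\omega)$, and a short $p$-adic valuation count to kill every $B(\omega)$ with $\omega\notin\mathbb{Z}$. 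This buys transparency and independence from the cited references, at the cost of a page of computation; the paper's route is terse but opaque. One point you leave implicit and might make explicit: the phrase ``determined by its frequency coefficients'' is the uniqueness of Bohr--Fourier expansions for uniformly almost periodic functions, concretely $a_{\omega}=\lim_{T\to\infty}\frac{1}{2T}\int_{-T}^{T} f(x)e^{-2\pi i\omega x}\,dx$ under $\sum_{\omega}|a_{\omega}|<\infty$, which is what legitimises comparing coefficients frequency by frequency here (the functions $S(\cdot,\textbf{j})$ are not periodic in $x$).
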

For any $\theta\in\mathbb{R}$, let $\pi_{\theta}:\mathbb{C}\to\mathbb{R}$ be the projection function such that
\begin{equation}\label{def:pi}
z\mapsto \text{Re}(ze^{-2\pi i\theta})
\end{equation}
where $\text{Re}(ze^{-2\pi i\theta})$ is the real part of $ze^{-2\pi i\theta}$. 
\begin{proof}[Proof of the Main Theorem] 
If $\phi$ is a non-constant function, then there exists $m\in\mathbb{Z}_+$ and $x'\in[0,1) $ such that $\phi(\frac{x'}{b^m})\neq\phi(\frac{x'+1}{b^m})$.
 Let $10_{\infty}=100\ldots,\,0_{\infty}=00\ldots\in\Sigma$. Thus the function $ g(\gamma):=S^{\phi}_{b,\,\gamma}(x',10_{\infty})-S^{\phi}_{b,\,\gamma}(x',0_{\infty}) $
 has at most countable zero points in $\mathbf{B}(0,1)$, which implies: (i) holds for at most countable  many $\gamma\in\mathbb{C}$ such that $0<|\gamma|<1$ and  we only need to consider the case that  condition (H) holds by Theorem \ref{thm:dichotomy}.
 
 For the case  $\Delta\in\mathbb{R}\setminus\mathbb{Q}$, it suffice to show $dim (\omega)=\min\{1+\frac{\log b}{\log1/|\gamma|},3\}$ since $\text{supp}(\omega)=K^{\phi}_{b,\,\gamma}$ and $\text{dim}_B(K^{\phi}_{b,\,\gamma})\le\min\{1+\frac{\log b}{\log1/|\gamma|},3\}.$
 Thus we have (ii) holds by Theorem A.
 
 For the case $\Delta\in\mathbb{Q}$, we may assume that $\Delta\neq0$ by \cite[Main Theorem]{Ren2022}. Let $\Delta=k/m\in(0,1)$ for some $k,\,m\in\mathbb{Z}_+.$ Since 
  the condition (H) holds, there exist $\theta\in[0,1) $ such that 
  \begin{equation}\label{theta}
  \pi_{\theta}\circ S^{\phi}_{b,\,\gamma}(x,1_{\infty})-\pi_{\theta}\circ S^{\phi}_{b,\,\gamma}(x,0_{\infty})\not\equiv0.
  \end{equation}
 define the  function $\overline{\phi}:\mathbb{R}\to\mathbb{R} $ be such that
 $$\overline{\phi}(x)=\pi_{\theta}\bigg(\,\sum_{t=0}^{m-1}\gamma^{m-1-t}\phi(b^tx)\,\bigg).$$
 Thus (\ref{theta}) implies $\overline{\phi}$ is a real analytic $\Z$-periodic function such that the condition (H) holds for  $b^m$ and $|\gamma|^m.$ Therefore we have
 \begin{equation}\label{eq:low0}
 \text{dim}\big(\,\omega^{\overline{\phi}}_{b^m,\,|\gamma|^m}\,\big)=\min\{2,1+\frac{\log b}{\log1/|\gamma|}\}
 \end{equation}
 by \cite[Theorem A]{Ren2022}. Define the function $\hat{\pi}_{\theta}:[0,1)\times\mathbb{C}\to[0,1)\times\mathbb{C}$ such that
 $\hat{\pi}_{\theta}(x,z)=(x,\,\pi_{\theta}(z))$. Therefore we have
 $$\hat{\pi}_{\theta}(\,\omega^{\phi}_{b,\,\gamma}\,)=\omega^{\overline{\phi}}_{b^m,\,|\gamma|^m},$$
 which implies $\text{dim}\big(\,\omega^{\phi}_{b,\,\gamma}\,\big)\ge\min\{2,1+\frac{\log b}{\log1/|\gamma|}\}$ by $\text{dim}\big(\,\omega^{\phi}_{b,\,\gamma}\,\big)\ge\text{dim}\big(\,\hat{\pi}_{\theta}(\omega^{\phi}_{b,\,\gamma})\,\big)$ and (\ref{eq:low0}). Therefore (iii) holds.
\end{proof}

\section{Ledrappier-Young theory and entropy}\label{pre}
In this section we shall first recall the classical Ledrappier-Young formula for $T$.
Then we will give a brief introduce the conditional measure theorem of Rohlin \cite{rokhlin1949fundamental} and recall the definition of entropy. Finally we will introduce some basic properties of the entropy of measures, which serve as basic tools in our paper.
\subsection{Ledrappier-Young formula}\label{ledrappier-young}
The following Theorem \ref{TheoremB} is the classical Ledrappier-Young formula for SRB measure $\omega$, which can be proved by exactly the same methods in \cite[Theorem 3.1]{Ren2022}. Thus we  omit the proof, see also \cite{ledrappier1992dimension, ledrappier1985metric, Shu2010}.
\begin{theorem}\label{TheoremB}
	 Let $b\ge 2$ be an integer and $\gamma\in\mathbb{C}$ such that $0<|\gamma|<1$.
	If $\phi:\mathbb{R}\to\mathbb{R}$ is a $\mathbb{Z}$-periodic Lipschitz function, then
	\begin{enumerate}
		\item[(1)] $\omega$ is exact dimensional;
		\item[(2)] there is a constant $\alpha\in [0,2]$ such that for Lebesgue-a.e. $x\in [0,1) $, $m_x$ is exact dimensional and $\dim(m_x)=\alpha$ and
		\begin{equation}\label{LedrapperYoungF}
		\dim(\omega)=1+\alpha.
		\end{equation}
	\end{enumerate}
\end{theorem}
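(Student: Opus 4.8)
The plan is to obtain Theorem \ref{TheoremB} exactly as \cite[Theorem 3.1]{Ren2022} does, by realising $(T,\omega)$ as a factor of an explicit uniformly hyperbolic symbolic system and then separating the two ``dimension components''; the point of this sketch is to explain why that argument transfers verbatim from $\mathbb{R}$ to $\mathbb{C}$. \emph{First,} set up the symbolic model. Applying the cocycle identity (\ref{F}) with $|\mathbf{w}|=1$ to the map
$$\hat{T}:[0,1)\times\Sigma\to[0,1)\times\Sigma,\qquad \hat{T}(x,\mathbf{j})=\big(bx\bmod 1,\ \lfloor bx\rfloor\,\mathbf{j}\,\big),$$
where $\lfloor bx\rfloor\,\mathbf{j}$ prepends the leading base-$b$ digit of $x$ to $\mathbf{j}$, one checks directly that $G\circ\hat{T}=T\circ G$ and that $\hat{T}$ preserves $\mathcal{m}\times\nu^{\mathbb{Z}_+}$. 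Coding a point as the bi-infinite digit string $(\ldots j_2 j_1;x_1x_2\ldots)\in\varLambda^{\mathbb{Z}}$ (digits of $x$ to the right of the point, reversed digits of $\mathbf{j}$ to the left), $\hat{T}$ becomes the left shift and $\mathcal{m}\times\nu^{\mathbb{Z}_+}$ becomes the $(1/b,\ldots,1/b)$-Bernoulli measure; hence $\hat{T}$ is measurably isomorphic to a two-sided Bernoulli shift, so it is ergodic, and $(T,\omega)$ — its factor by $G$ with $G_*(\mathcal{m}\times\nu^{\mathbb{Z}_+})=\omega$ — is ergodic too. In this picture the global stable manifold of a point is the vertical fibre $\{x\}\times\mathbb{C}$, on which $\omega$ disintegrates as $m_x$ and along which $T$ acts by the affine similarity $y\mapsto\gamma y+\phi(x)$; the global unstable leaf through $(x,S(x,\mathbf{j}))$ is the graph $\{(t,S(t,\mathbf{j})):t\in[0,1)\}$, which is a bi-Lipschitz image of $[0,1)$ since the series (\ref{S}) converges geometrically in $C^0$.

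\emph{Second}, I would prove that for Lebesgue-a.e.\ $x$ the measure $m_x$ is exact-dimensional with a dimension $\alpha\in[0,2]$ not depending on $x$. The recursion (\ref{FundementalFormular}) exhibits $m_x$ as a uniform average of affine \emph{similarity} images (all of ratio $|\gamma|^n$, the conformal feature) of the measures $m_{\mathbf{j}(x)}$ over the $b^n$ preimages $\mathbf{j}(x)$ of $x$; equivalently, the family $\{m_x\}$ is the disintegration of the hyperbolic measure $\omega$ along its stable foliation. Exact-dimensionality of such conditionals is provided by the Ledrappier--Young / Feng--Hu dimension theory \cite{ledrappier1985metric,Feng,Shu2010}; the weak regularity of $\phi$ is harmless here because the hyperbolic structure is uniform and completely explicit (expansion by the constant $b$, contraction by the similarity of ratio $|\gamma|$), so one may equally well argue by a direct martingale/covering estimate from (\ref{FundementalFormular}) as in \cite[Theorem 3.1]{Ren2022}. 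Constancy of $\dim(m_x)$ then follows because this quantity is invariant under the affine action of $T$ on the fibres, together with ergodicity of $(x\mapsto bx\bmod 1,\ \mathcal{m})$.

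\emph{Third}, add the two components. By the very definition $\omega=G_*(\mathcal{m}\times\nu^{\mathbb{Z}_+})$ and the product structure of the Bernoulli measure, the conditional of $\omega$ on an unstable leaf $G([0,1)\times\{\mathbf{j}\})$ is the push-forward of Lebesgue on $[0,1)\times\{\mathbf{j}\}$, i.e.\ arclength along a Lipschitz graph, hence exact-dimensional of dimension $1$. The Ledrappier--Young additivity — which in this conformal-stable skew product reduces to a Fubini/slicing statement for $\omega=\int_{[0,1)}\delta_x\times m_x\,dx$, the required local uniformity in $x$ of the exact-dimensionality of $m_t$ for $t$ in a short interval being again supplied by (\ref{FundementalFormular}) — then yields both exact-dimensionality of $\omega$ and $\dim(\omega)=1+\alpha$.

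The only genuine work is the second step together with the uniformity needed in the third, and this is precisely the content of \cite[Theorem 3.1]{Ren2022}: its proof carries over word for word because the fibre contraction $y\mapsto\gamma y$ is a conformal similarity of the plane exactly as it is a similarity of the line, so the single scaling ratio in the recursion (\ref{FundementalFormular}) is $|\gamma|^n$ in both cases and every metric estimate there has a verbatim counterpart here.
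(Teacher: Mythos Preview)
Your proposal is correct and matches the paper's treatment: the paper omits the proof entirely, stating only that it ``can be proved by exactly the same methods in \cite[Theorem 3.1]{Ren2022}'' and referring to \cite{ledrappier1992dimension, ledrappier1985metric, Shu2010}. Your sketch spells out precisely why that transfer from $\mathbb{R}$ to $\mathbb{C}$ is legitimate --- the fibre contraction $y\mapsto\gamma y$ is still a conformal similarity, so the single scale $|\gamma|^n$ governs all the covering estimates --- and this is exactly the point the paper is implicitly relying on.
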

Recall that the definition of $m_x$ is from (\ref{def:m_x}).

\subsection{Conditional measure and entropy}\label{subsection2}
Let $(\Omega,\mathscr{B},\mu)$ be a Lebesgue space (see \cite[Definition 2.3]{GTM79}). 
A partition $\eta$ of $\Omega$ is called {\em measurable partition} if, up to a  set of measure zero, the quotient space is separated by a countable number of measurable sets $\{	B_i\}$. Let $\hat{\eta}$ be the sub-$\sigma$-algebra of $\mathscr{B}$
whose elements are unions of elements of $\eta$.
\begin{theorem}[Rohlin \cite{rokhlin1949fundamental}]\label{Rohlin}
	Let $\eta$ be a measurable partition of a Lebesgue space $(\Omega,\mathscr{B},\mu)$. Then for every $x$ in the set of full $\mu$-measure, there is a probability measure $\mu^{\eta}_x$ defined on $\eta(x)$, the element of $\eta$ containing $x$. These measures are uniquely characterized (up to set of $\mu$-measure 0) by the following properties: if $A\subset \Omega$ is a measurable set, then $x\mapsto \mu^{\eta}_x(A)$ is $\hat{\eta}$-measurable and $$\mu(A)=\int \mu^{\eta}_x(A) d\mu(x).$$
	These  properties imply that for any $g\in L^1(\Omega,\mathscr{B},\mu)$, $\mu^{\eta}_x(g)=\mathbf{E}_{\mu}(g|\,\hat{\eta})(x)$ for $\mu-\text{a.e.\,}x$, and
	$\mu(g)=\int \mathbf{E}_{\mu}(g|\,\hat{\eta}) d\mu.$
\end{theorem}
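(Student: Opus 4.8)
The statement is the classical disintegration theorem, so the plan is the standard one: build the family $\{\mu^{\eta}_x\}$ as a martingale limit of elementary conditional measures along the finite sub-partitions generating $\hat{\eta}$, use the Lebesgue-space (standard Borel) structure to upgrade the limit to a genuine probability measure, and then read off the support on atoms and the uniqueness from the countable generating family. Concretely, since $\eta$ is a measurable partition I would fix a countable family $\{B_i\}_{i\ge 1}$ of elements of $\hat{\eta}$ that generates $\hat{\eta}$, and let $\eta_n$ be the finite partition generated by $B_1,\dots,B_n$, so $\hat{\eta}_n\uparrow\hat{\eta}$. For $x$ lying in an atom $A$ of $\eta_n$ with $\mu(A)>0$ (a set of full measure) put $\mu^{\eta_n}_x:=\mu(\,\cdot\cap A\,)/\mu(A)$; then for each fixed measurable $C$ the sequence $x\mapsto\mu^{\eta_n}_x(C)=\mathbf{E}_{\mu}(\mathbf{1}_C\mid\hat{\eta}_n)(x)$ is a bounded martingale, hence converges $\mu$-a.e.\ and in $L^1$ to $\mathbf{E}_{\mu}(\mathbf{1}_C\mid\hat{\eta})(x)$. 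Realizing $(\Omega,\mathscr{B},\mu)$ as a standard Borel space, fix a countable algebra $\mathscr{A}$ generating $\mathscr{B}$; applying this convergence to each $C\in\mathscr{A}$ and intersecting the countably many conull sets, on a conull set $\Omega_0$ the limits $\nu_x(C):=\lim_n\mu^{\eta_n}_x(C)$ exist for all $C\in\mathscr{A}$ and define a finitely additive, normalized set function on $\mathscr{A}$; after shrinking $\Omega_0$ by one further null set one also gets continuity at $\emptyset$ along the countably many decreasing sequences in $\mathscr{A}$ needed to test $\sigma$-additivity (using $\mathbf{E}_{\mu}(\mathbf{1}_{C_k}\mid\hat{\eta})\downarrow 0$ a.e.\ when $C_k\downarrow\emptyset$). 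By Carath\'eodory extension $\nu_x$ extends to a probability measure $\mu^{\eta}_x$ on $\mathscr{B}$, which agrees with $\mathbf{E}_{\mu}(\mathbf{1}_{(\cdot)}\mid\hat{\eta})(x)$ on the generating algebra, hence on all of $\mathscr{B}$ after discarding, for each given $A$, a further $\mu$-null set. The $\hat{\eta}$-measurability of $x\mapsto\mu^{\eta}_x(A)$ is built in, and $\mu(A)=\int\mu^{\eta}_x(A)\,d\mu(x)$ is the $B=\Omega$ instance of the conditional-expectation identity $\mu(A\cap B)=\int_B\mu^{\eta}_x(A)\,d\mu$.

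Next I would check that $\mu^{\eta}_x$ is carried by $\eta(x)$. Since each $\mathbf{1}_{B_i}$ is $\hat{\eta}$-measurable, $\mu^{\eta}_x(B_i)=\mathbf{E}_{\mu}(\mathbf{1}_{B_i}\mid\hat{\eta})(x)=\mathbf{1}_{B_i}(x)$ for $\mu$-a.e.\ $x$; intersecting over the countably many $i$, for $\mu$-a.e.\ $x$ one has $\mu^{\eta}_x(B_i)=\mathbf{1}_{B_i}(x)$ for all $i$ simultaneously, so $\mu^{\eta}_x$ is concentrated on $\bigcap_i\{z:\mathbf{1}_{B_i}(z)=\mathbf{1}_{B_i}(x)\}$, which coincides with $\eta(x)$ up to a null set because $\{B_i\}$ generates $\hat{\eta}$. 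Restricting $\mu^{\eta}_x$ to $\eta(x)$ then produces the measure in the statement without disturbing the two characterizing properties.

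For uniqueness and the ``these properties imply'' clause: if $\{\mu^{\eta}_x\}$ obeys the stated properties then, because $\mu^{\eta}_x$ lives on $\eta(x)$, for $B\in\hat{\eta}$ we have $\mu^{\eta}_x(A\cap B)=\mathbf{1}_B(x)\,\mu^{\eta}_x(A)$; feeding $A\cap B$ into $\mu(\cdot)=\int\mu^{\eta}_x(\cdot)\,d\mu$ yields $\mu(A\cap B)=\int_B\mu^{\eta}_x(A)\,d\mu$, which together with $\hat{\eta}$-measurability is exactly $\mu^{\eta}_x(A)=\mathbf{E}_{\mu}(\mathbf{1}_A\mid\hat{\eta})(x)$ for $\mu$-a.e.\ $x$; hence two such families agree $\mu$-a.e.\ on every member of $\mathscr{A}$ and so agree as Borel measures for $\mu$-a.e.\ $x$. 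The formula $\mu^{\eta}_x(g)=\mathbf{E}_{\mu}(g\mid\hat{\eta})(x)$ extends from indicators to simple functions by linearity and to $g\in L^1$ by approximating $g$ in $L^1$ along a countable dense set of simple functions, passing to a subsequence converging $\mu^{\eta}_x$-a.e.\ for $\mu$-a.e.\ $x$, and applying dominated convergence; integrating and using the tower property for conditional expectation then gives $\mu(g)=\int\mathbf{E}_{\mu}(g\mid\hat{\eta})\,d\mu$. The one genuinely delicate point, common to every disintegration theorem, is upgrading ``conditional expectation of a single fixed set'' to ``one family of measures valid for all sets at once''; this is exactly where the Lebesgue-space hypothesis is used, through the countable generating algebra and the almost-everywhere verification of $\sigma$-additivity.
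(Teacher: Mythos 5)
The paper does not prove this statement at all: it is quoted as Rokhlin's classical disintegration theorem with a citation to \cite{rokhlin1949fundamental}, so there is no in-paper argument to compare yours against. Your route --- finite partitions $\eta_n$ refining to $\hat\eta$, the bounded martingale $\mathbf{E}_\mu(\mathbf{1}_C\mid\hat{\eta}_n)\to\mathbf{E}_\mu(\mathbf{1}_C\mid\hat\eta)$, a countable generating algebra $\mathscr{A}$ on a standard model, extension of the a.e.\ limits to genuine measures, concentration on atoms via $\mu^\eta_x(B_i)=\mathbf{1}_{B_i}(x)$, and uniqueness plus the $L^1$ statement from the conditional-expectation characterization --- is the standard textbook proof, and most of it is carried out correctly.

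There is, however, one step that does not work as written, and it is exactly the delicate point you yourself single out. You claim that after discarding one further null set you obtain ``continuity at $\emptyset$ along the countably many decreasing sequences in $\mathscr{A}$ needed to test $\sigma$-additivity.'' Countable additivity of a finitely additive set function on an algebra is equivalent to continuity at $\emptyset$ along \emph{all} decreasing sequences in the algebra with empty intersection, and even though $\mathscr{A}$ is countable, the family of such sequences is uncountable; so you cannot secure this by intersecting countably many conull sets, one per sequence, and the a.e.\ argument via $\mathbf{E}_\mu(\mathbf{1}_{C_k}\mid\hat\eta)\downarrow 0$ only handles each fixed sequence separately. The standard repairs are: (1) choose the model so that no testing is needed --- realize the Lebesgue space (mod atoms) in $\{0,1\}^{\mathbb{N}}$ or $[0,1]$ and take $\mathscr{A}$ to be the clopen (or finite-unions-of-closed-dyadic-intervals) algebra, which is a compact class, so any finitely additive probability on it is automatically $\sigma$-additive; or equivalently verify, a.e.\ simultaneously for the countably many $A\in\mathscr{A}$, the inner regularity $\nu_x(A)=\sup_K\nu_x(K)$ over a countable compact class $K\subset A$, which follows from $\mathbf{E}_\mu(\mathbf{1}_{K_j}\mid\hat\eta)\uparrow\mathbf{E}_\mu(\mathbf{1}_A\mid\hat\eta)$ in $L^1$ for $K_j\uparrow$ with $\mu(A\setminus K_j)\to0$; or (2) bypass Carath\'eodory altogether and obtain $\mu^\eta_x$ from the Riesz representation theorem applied to the a.e.-defined positive linear functional on a countable dense set of continuous functions on the compact model. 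With either fix the rest of your argument (support on $\eta(x)$, uniqueness, and the extension from indicators to $L^1$, where the monotone-convergence route via simple $g_n\uparrow g$ is cleaner than $L^1$-approximation) goes through and yields the theorem as stated.
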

In  Theorem \ref{Rohlin}, we call $\{\,\mu^{\eta}_x\,\}_{x\in\Omega}$ {\em the canonical system of conditional measure with $\eta$}. 
A {\em countable partition} $\mathcal{P}$ is a countable collection of pairwise disjoint measurable subsets of $\Omega$ whose union is equal to $\Omega$.
For any sub-$\sigma$-algebra $\mathcal{M}$ of $\mathscr{B}$, any countable $\mathscr{B}$-measurable partition $\mathcal{P}$ of $\Omega$ we define the conditional information
$$\mathbf{I}_{\mu}(\,\mathcal{P}\,|\,\mathcal{M})=-\sum_{I\in\mathcal{P}}\chi_{I}\text{log}_b\mathbf{E}_{\mu}(\chi_{I}|\mathcal{M}),$$
and the conditional entropy
$$H_{\mu}(\,\mathcal{P}\,|\,\mathcal{M})=\int_{\Omega}\mathbf{I}_{\mu}(\,\mathcal{P}\,|\,\mathcal{M})d\mu.$$
If $\mathcal{M}=\{\emptyset,\,\Omega\}$, let $\mathbf{I}_{\mu}(\mathcal{P})=\mathbf{I}_{\mu}(\,\mathcal{P}\,|\,\mathcal{M})$ and
  $H(\mu,\mathcal{P})=H_{\mu}(\,\mathcal{P}\,|\,\mathcal{M})$  for convenience.
 
 For any countable $\mathscr{B}$-measurable partition $\mathcal{Q}$ of $\Omega$.
   Let $\mathcal{Q}(x)$  be the member of $\mathcal{Q}$ that contains $x$. If $\mu(\mathcal{Q}(x))>0$, we call the conditional measure
  $$\mu_{\mathcal{Q}(x)}(A)=\frac{\mu(A\cap \mathcal{Q}(x))}{\mu(\mathcal{Q}(x))}$$
  a {\em $\mathcal{Q}$-component} of $\mu$. 
  Let function $f_b:\mathbb{R}_+\to\mathbb{R}_+$ be a function such that
  $$f_b(p)= -p\log_b p$$
   where  the common convention $0\log 0=0$ is adopted.
  In particular we have
  $$H(\mu, \mathcal{Q})=\sum_{Q\in\mathcal{Q}}f_b\big(\mu(Q)\big).$$
  For another countable partition $\mathcal{P}$, we also have
  $$H(\mu, \mathcal{Q}|\mathcal{P})=\sum_{P\in \mathcal{P},\,\mu(P)>0} \mu(P) H(\mu_P, \mathcal{Q}).$$
  When $\mathcal{Q}$ is a {\em refinement} of $\mathcal{P}$, i.e.,
  $\mathcal{Q}(x)\subset\mathcal{P}(x)$ for every $x\in \Omega$, we have
  $$H(\mu, \mathcal{Q}|\mathcal{P})=H(\mu, \mathcal{Q})-H(\mu, \mathcal{P}).$$

\subsection{Entropy of measures}\label{sub:entropy}
 In this subsection we shall introduce some notations whose were  used extensively in ~\cite{hochman2014self,barany2019hausdorff,Hochman2022,hochman2021,Ren2022}, which also serve as the basic language in this paper. Latter we will recall some basic facts about the Entropy of measures.
 
Let  $(\Omega,\mathscr{B},\mu)$ be a Lebesgue space.
If there exists a sequence of partitions $\mathcal{Q}_i$, $i=0,1,2,\cdots$, such that $\mathcal{Q}_{i+1}$ is a refinement of $\mathcal{Q}_i$, we shall denote $\mu_{x,\,i}=\mu_{\mathcal{Q}_i(x)}$, and call it a {\em $i$-th component measure of  $\mu$}. For a finite set $I$ of integers, if for every $i\in I$, there is a random variable $Y_i$ defined over $(\Omega, \hat{\mathcal{Q}_i}, \mu)$, see Section \ref{subsection2} for the definition of $\hat{\mathcal{Q}_i}$. Then we shall use the following notation
$$\mathbb{P}_{i\in I} (K_i)=\mathbb{P}_{i\in I}^{\,\mu}(K_i):=\frac{1}{\# I} \sum_{i\in I} \mu(K_i),$$
where $K_i$ is an event for $Y_i$. If $Y_i$'s are $\mathbb{R}$-valued random variable, we shall also use the notation
$$\mathbb{E}_{i\in I} (Y_i)=\mathbb{E}^{\,\mu}_{i\in I}(Y_i):=\frac{1}{\# I} \sum_{i\in I} \mathbb{E}(Y_i).$$
Therefore the following holds
$$H(\mu, \mathcal{Q}_{m+n}|\mathcal{Q}_n)=\mathbb{E}(H(\mu_{x,\, n}, \mathcal{Q}_{m+n}))=\mathbb{E}^{\,\mu}_{i=n} (H(\mu_{x,\,i},\mathcal{Q}_{i+m})).$$

In most  cases of this paper, we shall consider the situation  that $\Omega=\mathbb{R},\,\mathbb{C}$. 
Write $\mathbb{R}^d$ for $\mathbb{R}$ or $\mathbb{C}$. 
Let $\mathcal{L}^{\mathbb{R}}_n$ be the partition of $\mathbb{R}$ into $b$-adic intervals of level $n$, i.e., the intervals $[j/b^n, (j+1)/b^n)$, $j\in \mathbb{Z}$.
In this paper we also regard $\mathbb{C}$ as $\mathbb{R}^2$. 
Similar let $$\mathcal{L}^{\mathbb{C}}_n:=\bigg\{\,[j_1/b^n, (j_1+1)/b^n)\times[j_2/b^n, (j_2+1)/b^n)\,\bigg\}_{j_1,\,j_2\in\mathbb{Z}}.$$

In the rest of paper, we may  write $\mathcal{L}^{\mathbb{R}}_n,\,\mathcal{L}^{\mathbb{C}}_n$ as $\mathcal{L}_n$ for convenience. 
For any $I\in\mathcal{L}_n$ and $\mu(I)>0$ let
$$\mu^I:=g_I (\mu_I)$$
where map $g_I(z):=b^n\cdot z+c_I,\,\forall \,z\in\mathbb{R}^d$ and $g_I(I)=[0,1)^d$ holds
for some $c_I\in\mathbb{R}^d$.
As \cite{hochman2014self} did, denote
\begin{equation}\label{def:measureupper}
\mu^{x,\,i}=\mu^{\mathcal{L}_i(x)}.
\end{equation}

 Let $\mathscr{P}(\R^d)$ denote the collection of all Borel probability measures in $\R^d$. 
If a probability measure $\mu\in \mathscr{P}(\mathbb{R}^d)$ is exact dimensional, its dimension is closely related to the entropy. In fact we have the
following result
\cite[Theorem 4.4]{Young}. See also~\cite[Theorem 1.3]{Fan2002}.
\begin{proposition}\label{prop:Young}
	If $\mu \in \mathscr{P}(\mathbb{R}^d)$ is exact dimensional, then
	$$\dim(\mu)= \lim\limits_{n\to\infty}\frac{1}{n} H(\mu,\mathcal{L}_n).$$
\end{proposition}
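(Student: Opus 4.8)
\textbf{Proof proposal for Proposition \ref{prop:Young}.}

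The plan is to reduce the statement to the fact that for an exact-dimensional measure the local dimension can be computed along the dyadic (here $b$-adic) filtration, and then to transfer this pointwise information to the entropy via a dominated-convergence argument. First I would recall that $H(\mu,\mathcal{L}_n) = \int_{\mathbb{R}^d} \mathbf{I}_{\mu}(\mathcal{L}_n)\,d\mu = \int_{\mathbb{R}^d} -\log_b \mu(\mathcal{L}_n(x))\,d\mu(x)$, so that $\frac1n H(\mu,\mathcal{L}_n) = \int \frac{-\log_b \mu(\mathcal{L}_n(x))}{n}\,d\mu(x)$. The task is therefore to show the integrand converges $\mu$-a.e.\ to the constant $\dim(\mu)$ (with the appropriate normalization of the logarithm base $b$, which matches the base used in the entropy $H$ throughout the paper), and then to justify interchanging limit and integral.

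For the pointwise convergence: since $\mathcal{L}_n(x)$ is contained in a ball of radius $\sqrt{d}\,b^{-n}$ about $x$ and contains a ball of radius $c\,b^{-n}$ (for a dimensional constant $c$ depending only on $d$), the exact-dimensionality hypothesis $\lim_{r\to 0}\frac{\log \mu(\mathbf{B}(x,r))}{\log r} = \dim(\mu)$ for $\mu$-a.e.\ $x$ gives, squeezing between these two radii, that $\lim_{n\to\infty}\frac{\log_b \mu(\mathcal{L}_n(x))}{-n} = \dim(\mu)$ for $\mu$-a.e.\ $x$; here one uses that $\log b^{-n} = -n\log b$ cancels against the change of logarithm base. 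This is the standard comparison between balls and $b$-adic cubes, and I would only sketch it.

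For the passage to the limit under the integral sign, I would invoke the maximal/Chebyshev-type bound that controls $\frac1n H(\mu,\mathcal{L}_n)$ uniformly in $n$ together with a uniform integrability argument — concretely, one shows that the family of functions $x\mapsto \frac{-\log_b\mu(\mathcal{L}_n(x))}{n}$ is uniformly integrable with respect to $\mu$ (a consequence, for instance, of the fact that $\int \big(\tfrac1n\log_b\tfrac{1}{\mu(\mathcal{L}_n(x))}\big)^{1+\delta} d\mu$ stays bounded, or more simply of a Fatou argument on one side combined with the trivial upper bound $H(\mu,\mathcal{L}_n)\le dn\log_b 2 + O(1)$ once $\mu$ is compactly supported, which it is in all our applications). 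Since the measures $m_x$ and $\omega$ in this paper are compactly supported, the crude upper bound $H(\mu,\mathcal{L}_n)\le Cn$ is immediate and Fatou plus the reverse Fatou (via the uniform upper bound) pin down the limit of the averages to be exactly $\int \dim(\mu)\,d\mu = \dim(\mu)$.

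The main obstacle is the exchange of limit and integral: pointwise a.e.\ convergence of $\frac1n\log_b\frac{1}{\mu(\mathcal{L}_n(x))}$ to a constant does not by itself yield convergence of the integrals, and one genuinely needs either uniform integrability or a compact-support reduction to kill the contribution of cubes of very small mass. Everything else — the ball-versus-cube comparison and the rewriting of $H(\mu,\mathcal{L}_n)$ as an integral of the information function — is routine. Since the cited reference \cite[Theorem 4.4]{Young} (and \cite[Theorem 1.3]{Fan2002}) establishes exactly this equivalence in the generality needed, I would in the paper simply refer to it; the sketch above indicates the argument for completeness.
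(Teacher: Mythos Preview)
The paper gives no proof of this proposition at all: it simply records the statement and cites \cite[Theorem~4.4]{Young} and \cite[Theorem~1.3]{Fan2002}. Your closing recommendation --- to refer to those references rather than prove it in the paper --- is therefore exactly what the paper does, so on that level your proposal matches perfectly.

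One small imprecision in your sketch, worth flagging since you offered it for completeness: the $b$-adic cube $\mathcal{L}_n(x)$ does \emph{not} in general contain a ball of radius $c\,b^{-n}$ centred at $x$ (think of $x$ arbitrarily close to a face of the cube), so the squeezing argument as written does not directly yield the upper bound $\limsup_n \frac{-\log_b\mu(\mathcal{L}_n(x))}{n}\le\dim(\mu)$. The cited references handle this by a density/covering argument rather than a naive ball-in-cube inclusion; alternatively, for the entropy statement one can bypass the full pointwise cube identity and argue more directly. This is a standard technicality and does not undermine the overall strategy you describe.
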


The following are some well-known facts about entropy and conditional entropy, which will  be used a lot in our work.
See \cite[Section 3.1]{hochman2014self} for details.
Define function
\begin{equation}\label{def:H}
H:[0,1]\to\mathbb{R}_+\quad\quad H(p)=f_b(t)+f_b(1-t).
\end{equation}
\begin{lemma}[Concavity and convexity]\label{lem:concave}
	Consider a measurable space $(\Omega, \mathscr{B})$ which is endowed with partitions $\mathcal{Q}$ and $\mathcal{P}$ such that $\mathcal{P}$ is a refinement of $\mathcal{Q}$. Let $\mu, \mu'$ be probability measures in $(\Omega, \mathscr{B})$. The for any $t\in (0,1)$,
	\begin{equation}\nonumber
	(\text{concavity})\quad
	tH(\mu,\mathcal{Q})+(1-t)H(\mu',\mathcal{Q})\le H(t\mu+(1-t)\mu',\mathcal{Q}),
	\end{equation}
	\begin{equation}\nonumber
	\qquad\qquad
	tH(\mu,\mathcal{P}|\mathcal{Q})+(1-t)H(\mu',\mathcal{P}|\mathcal{Q})\le H(t\mu+(1-t)\mu',\mathcal{P}|\mathcal{Q}),
	\end{equation}
	\begin{equation}\nonumber
	(\text{convexity})\quad
	 H(t\mu+(1-t)\mu',\mathcal{Q})\le t H(\mu,\mathcal{Q})+(1-t)H(\mu',\mathcal{Q})+H(t).
	\end{equation}
\end{lemma}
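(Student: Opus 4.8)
The plan is to reduce all three estimates to two elementary inputs: the concavity of $f_b(p)=-p\log_b p$ on $[0,1]$, and the chain rule for the entropy of a pair of countably-valued observables. Write $\nu:=t\mu+(1-t)\mu'$ throughout. The unconditional concavity comes first: from the identity $H(\nu,\mathcal{Q})=\sum_{Q\in\mathcal{Q}}f_b(\nu(Q))$ recalled in Section~\ref{subsection2} and $\nu(Q)=t\mu(Q)+(1-t)\mu'(Q)$, I would apply concavity of $f_b$ to each summand and add up over $Q\in\mathcal{Q}$; all summands being nonnegative, the rearrangement is legitimate in $[0,+\infty]$, and this yields $tH(\mu,\mathcal{Q})+(1-t)H(\mu',\mathcal{Q})\le H(\nu,\mathcal{Q})$.

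For the conditional concavity I would bootstrap from the unconditional case cell by cell. For each $Q\in\mathcal{Q}$ with $\nu(Q)>0$, the corresponding components satisfy the convex decomposition
$$\nu_Q=\frac{t\mu(Q)}{\nu(Q)}\,\mu_Q+\frac{(1-t)\mu'(Q)}{\nu(Q)}\,\mu'_Q$$
(a term being omitted when its weight vanishes); applying the concavity just proven to the trace of $\mathcal{P}$ on $Q$, multiplying through by $\nu(Q)$, summing over $Q$, and invoking the definition $H(\,\cdot\,,\mathcal{P}\mid\mathcal{Q})=\sum_{Q}(\,\cdot\,)(Q)\,H((\,\cdot\,)_Q,\mathcal{P})$ from Section~\ref{subsection2} yields $H(\nu,\mathcal{P}\mid\mathcal{Q})\ge tH(\mu,\mathcal{P}\mid\mathcal{Q})+(1-t)H(\mu',\mathcal{P}\mid\mathcal{Q})$.

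For the convexity, whose superadditivity defect is exactly $H(t)$, I would pass to the auxiliary space $\Omega\times\{0,1\}$ equipped with the measure $\lambda$ that equals $t\mu$ on $\Omega\times\{0\}$ and $(1-t)\mu'$ on $\Omega\times\{1\}$, so that its marginal on $\Omega$ is $\nu$. Let $Z$ be the $\{0,1\}$-coordinate and $X$ the $\mathcal{Q}$-cell of the $\Omega$-coordinate. The chain rule gives $H_\lambda(X)\le H_\lambda(X,Z)=H_\lambda(Z)+H_\lambda(X\mid Z)$, and one reads off $H_\lambda(X)=H(\nu,\mathcal{Q})$, $H_\lambda(Z)=H(t)$ (this is precisely why $H(t)$ is set equal to $f_b(t)+f_b(1-t)$), and $H_\lambda(X\mid Z)=tH(\mu,\mathcal{Q})+(1-t)H(\mu',\mathcal{Q})$, which is the claimed bound. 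The only point that genuinely needs care — rather than a real obstacle, the lemma being soft — is the bookkeeping when the countable partitions carry infinite entropy: every identity and inequality above is to be read in $[0,+\infty]$, justified by monotone approximation over finite sub-partitions, and in the convexity estimate the finite correction $H(t)$ makes the bound vacuous whenever the right-hand side diverges.
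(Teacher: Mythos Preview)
Your argument is correct and entirely standard. The paper does not supply its own proof of this lemma: it simply states the three inequalities as ``well-known facts about entropy and conditional entropy'' and refers the reader to \cite[Section 3.1]{hochman2014self} for details. Your write-up therefore fills in what the paper leaves to the literature, and does so along the expected lines---termwise concavity of $f_b$, the cell-by-cell bootstrap for the conditional version, and the coupling/chain-rule trick for the almost-convexity with defect $H(t)$.
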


\begin{lemma} \label{lem:affinetransform}
	Let $\mu\in \mathscr{P}(\mathbb{R}^d)$. There is a constant $C_d>0$ such that for any affine map  $f(x)=ax+c$, $a,\in \R\setminus\{0\},\, c\in\R^d$ and for any $n\in\mathbb{N}$ we have
	$$\left|H(f\mu,\,\mathcal{L}_{n+[\log_b |a|]})-H(\mu,\,\mathcal{L}_{n})\right|\le C_d.$$
\end{lemma}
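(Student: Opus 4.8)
The plan is to split the affine map $f(x)=ax+c$ into elementary pieces on which the effect on $b$-adic entropy is either exact or costs at most an additive constant depending only on $b$ and $d$, and then to add up the errors. Write $a=\varepsilon|a|$ with $\varepsilon\in\{-1,1\}$, and factor $|a|=b^{k}t$ with $k=[\log_b|a|]\in\Z$ and $t=|a|b^{-k}\in[1,b)$. Then
\[
f=\tau_{c}\circ S_{b^{k}}\circ S_{t}\circ R_{\varepsilon},\qquad S_{\lambda}(z)=\lambda z,\quad \tau_{c}(z)=z+c,\quad R_{\varepsilon}(z)=\varepsilon z ,
\]
and the three scalings commute, so $f\mu=\tau_{c}\,S_{b^{k}}\,S_{t}\,R_{\varepsilon}\,\mu$ and it suffices to control $H(\cdot,\mathcal L_{m})$ through each of the four maps in turn.

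For the non-exact pieces I would use a comparable-partitions estimate: if $\mathcal P,\mathcal Q$ are countable partitions of $\R^{d}$ such that every atom of $\mathcal Q$ meets at most $N$ atoms of $\mathcal P$ and vice versa, then $\bigl|H(\rho,\mathcal P)-H(\rho,\mathcal Q)\bigr|\le\log_b N$ for all $\rho\in\mathscr P(\R^{d})$. This drops out of the identities in Section~\ref{subsection2}: since $\mathcal P\vee\mathcal Q$ refines $\mathcal Q$,
\[
H(\rho,\mathcal P)\le H(\rho,\mathcal P\vee\mathcal Q)=H(\rho,\mathcal Q)+H(\rho,\mathcal P\mid\mathcal Q),\qquad H(\rho,\mathcal P\mid\mathcal Q)=\sum_{Q}\rho(Q)\,H(\rho_{Q},\mathcal P)\le\log_b N,
\]
the last bound because each $\rho_{Q}$ sits on at most $N$ atoms of $\mathcal P$ (a measure supported on $\le N$ atoms has $\mathcal P$-entropy $\le\log_b N$, by concavity of $f_{b}$, cf. Lemma~\ref{lem:concave}); the reverse inequality is symmetric. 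Applying this with $\mathcal Q=\tau_{c}^{-1}\mathcal L_{m}=\mathcal L_{m}-c$ (any translate of a $b$-adic cube meets at most $2^{d}$ of them, and conversely) shows $\tau_{c}$ changes $H(\cdot,\mathcal L_{m})$ by $\le d\log_b2$; with $\mathcal Q=R_{\varepsilon}^{-1}\mathcal L_{m}$, a reflected $b$-adic grid of meeting multiplicity $\le 2^{d}$, the map $R_{\varepsilon}$ costs $\le d\log_b2$; and with $\mathcal Q=S_{t}^{-1}\mathcal L_{m}=S_{1/t}\mathcal L_{m}$, whose cubes have side $b^{-m}/t\in(b^{-m-1},b^{-m}]$ and hence meet $\mathcal L_{m}$ with multiplicity bounded by some $N(b,d)$, the map $S_{t}$ costs $\le\log_b N(b,d)$. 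The remaining piece $S_{b^{k}}$ is exact: $S_{b^{k}}^{-1}\mathcal L_{m}=\mathcal L_{m+k}$ literally as partitions, so $H(S_{b^{k}}\rho,\mathcal L_{m})=H(\rho,\mathcal L_{m+k})$ for every $\rho$ and $m$, with no error at all.

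Chaining these four estimates along $f\mu=\tau_{c}\,S_{b^{k}}\,S_{t}\,R_{\varepsilon}\,\mu$, and letting the exact shift through $S_{b^{k}}$ absorb the offset $[\log_b|a|]$ in the partition index, yields
\[
\bigl|H(f\mu,\mathcal L_{n+[\log_b|a|]})-H(\mu,\mathcal L_{n})\bigr|\le 2d\log_b2+\log_b N(b,d)=:C_{d},
\]
a constant independent of $a$, $c$ and $n$, which is exactly the asserted inequality. I do not anticipate a real obstacle: the only spot needing a little care is the uniform comparability of $S_{1/t}\mathcal L_{m}$ with $\mathcal L_{m}$ --- where one uses that the rescaling factor $t$ is confined to the single ``octave'' $[1,b)$, so the overlap multiplicity depends on $b$ alone, not on $m$ --- together with the bookkeeping of which composition slot carries the index shift. (It is also automatic that the two entropies are simultaneously finite or infinite, and their difference makes sense in particular whenever $\mu$ is compactly supported, as it is in all our applications.)
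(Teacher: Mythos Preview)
The paper does not actually prove this lemma; it is listed among the ``well-known facts'' in Section~\ref{sub:entropy} with a pointer to \cite[Section~3.1]{hochman2014self}. Your decomposition $f=\tau_{c}\circ S_{b^{k}}\circ S_{t}\circ R_{\varepsilon}$ together with the comparable-partitions bound is exactly the standard argument, and each of your four individual estimates (translation, reflection, bounded scaling, exact $b$-adic scaling) is correct.

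There is, however, a bookkeeping slip in the final chaining, and it matters. Your identity $S_{b^{k}}^{-1}\mathcal L_{m}=\mathcal L_{m+k}$ is right, and it gives $H(S_{b^{k}}\rho,\mathcal L_{m})=H(\rho,\mathcal L_{m+k})$. But then, starting from $H(f\mu,\mathcal L_{n+k})$ and peeling off $\tau_{c}$ (cost $\le d\log_{b}2$) followed by $S_{b^{k}}$, you land at
\[
H(S_{t}R_{\varepsilon}\mu,\mathcal L_{(n+k)+k})=H(S_{t}R_{\varepsilon}\mu,\mathcal L_{n+2k}),
\]
not at level $n$. The remaining two steps then compare this with $H(\mu,\mathcal L_{n+2k})$, so what your chain actually proves is
\[
\bigl|H(f\mu,\mathcal L_{m})-H(\mu,\mathcal L_{m+[\log_{b}|a|]})\bigr|\le C_{d},
\]
which is the statement with the index shift on the \emph{other} side. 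A one-line sanity check shows the printed version cannot hold with a constant independent of $a$: take $d=1$, $\mu$ Lebesgue on $[0,1]$, $c=0$, $a=b^{k}$; then $H(\mu,\mathcal L_{n})=n$ while $H(f\mu,\mathcal L_{n+k})=n+2k$, so the difference is $2k$. In short, the displayed inequality carries a sign typo (it should read $n-[\log_{b}|a|]$, or equivalently the shift should sit on $\mu$), and your argument --- carried through carefully --- proves exactly that corrected form, which is also the version implicitly used elsewhere in the paper (for instance in the manipulations around Claim~\ref{claim:H}).
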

%

\begin{lemma}\label{lem:pfclose}
	Given a probability space $(\Omega, \mathscr{B}, \mu)$, if $f,g:\Omega\to\mathbb{R}^d$ are measurable and $\sup_x|f(x)-g(x)|\le b^{-n}$ then
	$$\left|H(f\mu, \mathcal{L}_{n})-H(g\mu, \mathcal{L}_{n})\right|\le C_d,$$
	where $C_d$ is an absolute constant.
\end{lemma}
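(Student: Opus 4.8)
\textbf{Proof proposal for Lemma~\ref{lem:pfclose}.}
The plan is to reduce the statement to a comparison of two finite entropy sums over a common partition by passing to the coarser partition that both $f\mu$ and $g\mu$ respect up to bounded error. First I would fix $n$ and observe that since $\sup_x|f(x)-g(x)|\le b^{-n}$, for every point $\omega\in\Omega$ the images $f(\omega)$ and $g(\omega)$ lie within $b^{-n}$ of each other, hence in $b$-adic cubes of level $n$ that are either equal or (in each coordinate) adjacent. Thus if $\mathcal{L}'$ denotes the partition of $\mathbb{R}^d$ into cubes of side $b^{-n}$ but with grid shifted so as to merge $2^d$ (in the $\mathbb{C}$ case, $4$) neighbouring level-$n$ cubes — equivalently, passing to $\mathcal{L}_{n-1}$ after a bounded relabelling — one gets a partition $\mathcal{M}$ with the property that $\mathcal{L}_n$ refines $\mathcal{M}$ with uniformly bounded fibres (each atom of $\mathcal{M}$ contains at most $3^d$ atoms of $\mathcal{L}_n$), and moreover $f(\omega)$ and $g(\omega)$ always land in the \emph{same} atom of a suitable such $\mathcal{M}$. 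Concretely I would take $\mathcal{M}$ to be $\mathcal{L}_{n-\lceil\log_b 3\rceil}$ (the cube of side $\ge 3b^{-n}$ containing $f(\omega)$ also contains $g(\omega)$), so that $f\mu$ and $g\mu$ induce the \emph{identical} push-forward on $\mathcal{M}$: $(f\mu)(M)=(g\mu)(M)=\mu(f^{-1}M)=\mu(g^{-1}M)$ for each $M\in\mathcal{M}$ since $f^{-1}M$ and $g^{-1}M$ coincide off a $\mu$-null set when $M$ is a union of translates large enough to swallow the $b^{-n}$ discrepancy; one checks this by a direct inclusion argument on preimages.

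Granting that $f\mu$ and $g\mu$ agree on $\mathcal{M}$, I would then write, using the refinement identity recalled just before Lemma~\ref{lem:concave} and the fact that $\mathcal{L}_n$ refines $\mathcal{M}$,
\begin{equation}\nonumber
H(f\mu,\mathcal{L}_n)=H(f\mu,\mathcal{M})+H(f\mu,\mathcal{L}_n\mid\mathcal{M}),
\end{equation}
and likewise for $g\mu$. The first terms cancel, so $|H(f\mu,\mathcal{L}_n)-H(g\mu,\mathcal{L}_n)|\le H(f\mu,\mathcal{L}_n\mid\mathcal{M})+H(g\mu,\mathcal{L}_n\mid\mathcal{M})$. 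Each conditional entropy is bounded by $\log_b(\text{max number of }\mathcal{L}_n\text{-atoms in an }\mathcal{M}\text{-atom})$, which is a purely dimensional constant (at most $d\log_b(3b^{\lceil\log_b3\rceil})$, say), giving the claim with $C_d$ absolute. Alternatively, and perhaps more cleanly, I would invoke Lemma~\ref{lem:affinetransform} together with the observation that the identity map and the translation by $f(\omega)-g(\omega)$ differ by at most one grid cell, but the conditional-entropy route avoids having to make the translation uniform in $\omega$.

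The only genuinely delicate point is the claim that $f\mu$ and $g\mu$ push forward to the \emph{same} measure on the coarse partition $\mathcal{M}$ — for an arbitrary fixed grid this is false, since $f(\omega)$ and $g(\omega)$, though close, can straddle a boundary of $\mathcal{M}$. The fix is that we do not need one fixed $\mathcal{M}$; it is enough to bound $H(f\mu,\mathcal{L}_n)$ and $H(g\mu,\mathcal{L}_n)$ separately by quantities differing by $O_d(1)$. So the robust version of the argument is: let $\mathcal{M}=\mathcal{L}_{n-k}$ with $b^k\ge 3$; then $\mathcal{L}_n$ refines $\mathcal{M}$ with at most $b^{kd}$ atoms per cell, and the measures $f\mu$ and $g\mu$ restricted to $\mathcal{M}$ satisfy $\|f\mu|_{\mathcal{M}}-g\mu|_{\mathcal{M}}\|\le$ (mass of the set of $\omega$ with $f(\omega),g(\omega)$ in distinct $\mathcal{M}$-cells) — but by construction of $k$ every such pair lies in a common $\mathcal{M}$-cell, so this mass is zero and the two coarse measures are literally equal. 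I expect writing out the preimage inclusion $f^{-1}(M)=g^{-1}(M)$ for $M\in\mathcal{M}$ cleanly (choosing the half-open convention for cubes consistently) to be the one spot requiring care; everything else is the standard refinement decomposition of entropy plus a counting bound, exactly in the spirit of~\cite[Section 3.1]{hochman2014self}.
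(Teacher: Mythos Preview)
The paper does not supply its own proof of this lemma; it is listed among standard facts with a blanket reference to \cite[Section 3.1]{hochman2014self}. Your proposal, however, contains a genuine error at precisely the step you flag as delicate.

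Your ``fix'' asserts that taking $\mathcal{M}=\mathcal{L}_{n-k}$ with $b^k\ge3$ forces $f(\omega)$ and $g(\omega)$ to lie in the same $\mathcal{M}$-cell whenever $|f(\omega)-g(\omega)|\le b^{-n}$. This is false. The $b$-adic partitions are nested, so every boundary hyperplane of $\mathcal{L}_{n-k}$ is already a boundary hyperplane of $\mathcal{L}_n$; enlarging the cells removes some boundaries but not all. If $f(\omega)$ and $g(\omega)$ straddle one of the surviving hyperplanes (e.g.\ in $\mathbb{R}$, $f(\omega)=-\tfrac12 b^{-n}$ and $g(\omega)=\tfrac12 b^{-n}$), they lie in different $\mathcal{M}$-cells no matter how large $k$ is, and no half-open convention repairs this. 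Hence $f\mu$ and $g\mu$ need not agree on $\mathcal{M}$, and the cancellation of the $H(\cdot,\mathcal{M})$ terms on which your argument rests does not occur.

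The standard argument bypasses any coarse partition of the target and works instead with the pullback partitions $\mathcal{Q}=f^{-1}\mathcal{L}_n$ and $\mathcal{Q}'=g^{-1}\mathcal{L}_n$ of $\Omega$. The hypothesis implies that each atom of $\mathcal{Q}$ meets at most $3^d$ atoms of $\mathcal{Q}'$ (if $f(\omega)\in Q\in\mathcal{L}_n$ then $g(\omega)$ lies in one of the at most $3^d$ level-$n$ cubes adjacent to $Q$), and symmetrically. Thus
\[
H(\mu,\mathcal{Q})\;\le\;H(\mu,\mathcal{Q}\vee\mathcal{Q}')\;\le\;H(\mu,\mathcal{Q}')+d\log_b 3,
\]
and likewise with $\mathcal{Q}$ and $\mathcal{Q}'$ interchanged, giving $|H(f\mu,\mathcal{L}_n)-H(g\mu,\mathcal{L}_n)|\le d\log_b 3$. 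This is the bounded-multiplicity idea you gestured at in your first paragraph; the mistake was relocating it to a coarser \emph{target} partition rather than to the common refinement of pullbacks on $\Omega$.
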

The following lemma is from \cite[Lemma 3.4]{hochman2014self}.
\begin{lemma}\label{lem:decomposition}
For $R\ge1$ and $\mu\in\mathscr{P}([-R,R]^2)$ and integers $m\le n$,
$$\frac1nH(\mu,\mathcal{L}_n)=\mathbb{E}^{\,\mu}_{0\le i<n}\bigg(\frac1mH(\mu^{x,\,i},\mathcal{L}_m) \bigg)+O\big(\frac mn+\frac{\log R}n\big).$$	
\end{lemma}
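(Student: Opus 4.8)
The plan is to reduce the statement to the chain rule for conditional entropies along the refining sequence $\mathcal{L}_0 \prec \mathcal{L}_1 \prec \mathcal{L}_2 \prec \cdots$, together with the translation/scaling invariance of the $b$-adic partitions that underlies the definition of $\mu^{x,i}$ in \eqref{def:measureupper}. First I would record the two elementary identities on which everything rests. On one hand, since $\mathcal{L}_{i+m}$ refines $\mathcal{L}_i$, the formula $H(\mu,\mathcal{Q}|\mathcal{P})=\sum_{P\in\mathcal{P},\,\mu(P)>0}\mu(P)H(\mu_P,\mathcal{Q})$ from Section \ref{subsection2} gives $H(\mu,\mathcal{L}_{i+m}\,|\,\mathcal{L}_i)=\sum_{I\in\mathcal{L}_i,\,\mu(I)>0}\mu(I)\,H(\mu_I,\mathcal{L}_{i+m})$; and because the affine map $g_I$ carries the cells of $\mathcal{L}_{i+m}$ lying in $I$ bijectively onto the cells of $\mathcal{L}_m$ in $[0,1)^2$, we have $H(\mu_I,\mathcal{L}_{i+m})=H(\mu^I,\mathcal{L}_m)$, hence $H(\mu,\mathcal{L}_{i+m}\,|\,\mathcal{L}_i)=\mathbb{E}_{x\sim\mu}\big(H(\mu^{x,i},\mathcal{L}_m)\big)$. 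On the other hand, telescoping $H(\mu,\mathcal{Q}|\mathcal{P})=H(\mu,\mathcal{Q})-H(\mu,\mathcal{P})$ for refinements yields $H(\mu,\mathcal{L}_n)=H(\mu,\mathcal{L}_0)+\sum_{j=0}^{n-1}H(\mu,\mathcal{L}_{j+1}\,|\,\mathcal{L}_j)$.

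The core of the argument is then a reindexing. Writing $H_j:=H(\mu,\mathcal{L}_{j+1}\,|\,\mathcal{L}_j)$ and using $H(\mu,\mathcal{L}_{i+m}\,|\,\mathcal{L}_i)=\sum_{j=i}^{i+m-1}H_j$, I would swap the order of summation:
$$\sum_{i=0}^{n-1}H(\mu,\mathcal{L}_{i+m}\,|\,\mathcal{L}_i)=\sum_{i=0}^{n-1}\sum_{j=i}^{i+m-1}H_j=\sum_{j=0}^{n+m-2}c_j\,H_j,\qquad c_j:=\#\{\,0\le i\le n-1:\ i\le j\le i+m-1\,\}.$$
A direct count gives $c_j=m$ for $m-1\le j\le n-1$, while $c_j=j+1\le m-1$ for $0\le j\le m-2$ and $c_j=n+m-1-j\le m-1$ for $n\le j\le n+m-2$. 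Since each cell of $\mathcal{L}_j$ meeting $\mathrm{supp}(\mu)$ splits into $b^2$ cells of $\mathcal{L}_{j+1}$, we have $0\le H_j\le 2$, so $\sum_{j=0}^{n+m-2}c_jH_j=m\sum_{j=0}^{n-1}H_j+E$ with $|E|\le 2\sum_{k=1}^{m-1}k + 2\sum_{k=1}^{m-1}k=O(m^2)$. Combining with the two identities above and dividing by $nm$,
$$\mathbb{E}^{\,\mu}_{0\le i<n}\Big(\tfrac1mH(\mu^{x,i},\mathcal{L}_m)\Big)=\frac{1}{nm}\sum_{i=0}^{n-1}H(\mu,\mathcal{L}_{i+m}\,|\,\mathcal{L}_i)=\frac1n H(\mu,\mathcal{L}_n)-\frac1n H(\mu,\mathcal{L}_0)+O\!\Big(\frac mn\Big).$$
Finally, since $\mu$ is supported on $[-R,R]^2$ it charges at most $(2R+1)^2$ cells of $\mathcal{L}_0$, whence $0\le H(\mu,\mathcal{L}_0)\le \log_b\big((2R+1)^2\big)=O(\log R)$, and $\frac1nH(\mu,\mathcal{L}_0)=O\big(\tfrac{\log R}{n}\big)$; rearranging gives the claimed estimate.

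I do not expect a genuine obstacle here: the only content beyond bookkeeping is the scaling invariance identity $H(\mu_I,\mathcal{L}_{i+m})=H(\mu^I,\mathcal{L}_m)$ and the a priori bound $H_j\le 2$, which is exactly what forces the boundary correction to be $O(m^2)$ and hence $O(m/n)$ after normalization. The step requiring the most care is the multiplicity count for $c_j$ (in particular keeping track of the two boundary ranges $0\le j\le m-2$ and $n\le j\le n+m-2$ under the standing assumption $m\le n$), together with verifying that $R$ enters the final error only through the coarse-scale term $H(\mu,\mathcal{L}_0)$.
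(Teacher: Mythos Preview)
Your argument is correct and is precisely the standard proof of this fact; the paper does not give its own proof but cites \cite[Lemma~3.4]{hochman2014self}, whose argument is the same telescoping-and-reindexing computation you carry out. The key identities you isolate (namely $H(\mu_I,\mathcal{L}_{i+m})=H(\mu^I,\mathcal{L}_m)$, the multiplicity count for $c_j$, and the coarse-scale bound $H(\mu,\mathcal{L}_0)=O(\log R)$) are exactly what is needed, and your bookkeeping on the boundary ranges is accurate.
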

See (\ref{def:measureupper}) for the definition of $\mu^{x,\,i}$.

\section{ dimension conservation}\label{DC}
In this section, we shall first introduce the definition of dimension conservation. Later we will give the  dimension conservation property of measures $ m_x$.
The following notation is introduced by Furstenberg \cite{Furstenberg}.
\begin{definition}\label{def:m}
	For any $\theta\in[0,1]$, a Borel probability measure $\mu\in\mathscr{P}(\mathbb{C})$ is called  {\em $\theta$-dimension conservation}
	 if we have
	 	\begin{enumerate}
	 	\item[(1)] $\mu,\,\pi_{\theta}\mu$ are exact-dimensional;
	 	\item[(2)] for $\mu\text{-a.e.}\, z\in\mathbb{C}$,  $\mu^{\eta_{\theta}}_{z}$ is exact-dimensional and
	 	$$\text{dim}(\mu)=\text{dim}(\pi_{\theta}\mu)+\text{dim}(\mu^{\eta_{\theta}}_{z})$$
	 \end{enumerate}
 where  $\pi_{\theta}$ is from (\ref{def:pi}), 
 partition $\eta_{\theta}:=\big\{\,\pi_{\theta}^{-1}(\,\{x\}\,):\,x\in\mathbb{R}\,\big\} $ and $\{\,\mu^{\eta_{\theta}}_{z}\,\}_{z\in\mathbb{C}}$ is the canonical system of conditional measure with $\eta_{\theta}$ (See Theorem \ref{Rohlin}).
\end{definition}

  Feng and Hu \cite{Feng} studied the exact-dimensional properties of   selfsimilar measures 
 by Ledrappier-Young theory
  \cite{ledrappier1985metric}. 
 The difference is that they  consider the system in symbolic space  instead of Riemann space. They also provided some useful tools  for this approach. Based on these,  Falconer and Jin \cite{Falconer}
studied the dimension conservation properties of random self-similar. Following this strategy we have the following result, which is important to prove Theorem \ref{thm:entropygrowth}.

\begin{theorem}\label{thm:DC}
	Let $b\ge 2$ be an integer and $\gamma\in\mathbb{C}$ such that $0<|\gamma|<1$.
	Let $\phi:\mathbb{R}\to\mathbb{R}$ be a $\mathbb{Z}$-periodic Lipschitz function.
	If $\Delta\in\mathbb{R}\setminus\mathbb{Q}$, then there exists nonnegative constants $\beta,\,\upsilon$ such that the following holds.
	
	For Lebesgue-$\text{a.e.}\,(x,\theta)\in [0,1)^2,$ the measure $m_x$ is 
	$\theta$-dimension conservation. Furthermore we have
		\begin{enumerate}
		\item[(D.1)] $\text{dim}(\pi_{\theta}m_x)=\beta$;
		\item[(D.2)] for $m_x\text{-a.e.}\, z\in\mathbb{C}$,
		$\text{dim}\big(\,(m_x)^{\eta_{\theta}}_{z}\,\big)=\upsilon$.
	\end{enumerate}
\end{theorem}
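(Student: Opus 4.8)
## Proof proposal for Theorem \ref{thm:DC}

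The plan is to transport the self-similarity of $m_x$ under the family of maps $\{f_{x,\mathbf{j}}\}$ (see (\ref{FundementalFormular})–(\ref{T^nm_x})) into the symbolic picture of Feng--Hu \cite{Feng}, and then run their Ledrappier--Young machinery together with the dimension-conservation argument of Falconer--Jin \cite{Falconer}. Concretely, I would first fix the coding map $S_x:\Sigma\to\mathbb{C}$ from (\ref{S_x}) and regard $m_x=S_x(\nu^{\mathbb{Z}_+})$ as the image of a Bernoulli measure under a (non-stationary, because the contraction is $\gamma^n$ at depth $n$, but $|\gamma|<1$ so uniformly contracting) iterated function system indexed by $\varLambda$. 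The key structural input is (\ref{F}): $S(x,\mathbf{w}\mathbf{i})=S(x,\mathbf{w})+\gamma^{m}S(\mathbf{w}(x),\mathbf{i})$, which says that the cylinder measure $(m_x)_{[\mathbf{w}]}$ is an affine copy, with linear part $\gamma^{m}$, of $m_{\mathbf{w}(x)}$. Because $\mathbf{w}(x)$ ranges over all of $[0,1)$ as $\mathbf{w}$ varies and the base map $x\mapsto bx\bmod 1$ preserves Lebesgue measure $\mathcal{m}$, the family $\{m_x\}_{x}$ is ``self-similar on average'' in exactly the sense needed to apply the a.e.\ statements of \cite{Feng,Falconer}.

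Next I would set up the two relevant entropies along the $b$-adic filtration $\mathcal{L}_n$. For the projection, note that $\pi_\theta\circ f_{x,\mathbf{j}}$ is again affine on $\mathbb{R}$ with linear part of modulus $|\gamma|^{|\mathbf{j}|}$ composed with a rotation by $\Delta\cdot|\mathbf{j}|$; the irrationality of $\Delta$ is what forces the relevant projected angle to equidistribute, so that after averaging over $x$ and $\theta$ the quantity $\frac1n H(\pi_\theta m_x,\mathcal{L}_n)$ converges to a single constant $\beta$ independent of $(x,\theta)$ — here one uses Proposition \ref{prop:Young} once exact-dimensionality of $\pi_\theta m_x$ is known, together with Lemma \ref{lem:affinetransform} to handle the affine pieces and Lemma \ref{lem:decomposition} to pass between scales. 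For the fibre measures $(m_x)^{\eta_\theta}_z$, I would invoke Rohlin's disintegration (Theorem \ref{Rohlin}) with respect to the measurable partition $\eta_\theta$, and identify $\mathrm{dim}\big((m_x)^{\eta_\theta}_z\big)$ via the conditional-entropy difference $H(m_x,\mathcal{L}_n)-H(\pi_\theta m_x,\mathcal{L}_n)$ in the limit — this is where the dimension-conservation identity $\mathrm{dim}(m_x)=\mathrm{dim}(\pi_\theta m_x)+\mathrm{dim}\big((m_x)^{\eta_\theta}_z\big)$ will come out, once all three quantities are shown to exist as a.e.\ constants. Exact-dimensionality of $m_x$ itself (with constant $\alpha$) is already given by Theorem \ref{TheoremB}(2), so the remaining work is exact-dimensionality of the projections and of the conditionals, plus the additivity.

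The heart of the argument — and the step I expect to be the main obstacle — is proving that the fibre dimension $\upsilon$ is genuinely constant for Lebesgue-a.e.\ $(x,\theta)$ rather than merely measurable in $(x,\theta,z)$. The difficulty is that the disintegration $\{(m_x)^{\eta_\theta}_z\}$ does not transform under $f_{x,\mathbf{j}}$ in a clean stationary way: applying $f_{x,\mathbf{j}}$ rotates the fibre direction $\theta$ by $|\mathbf{j}|\Delta$, so a single fibre of $m_x$ is built from fibres of $m_{\mathbf{j}(x)}$ taken in \emph{different} directions. To close this one must exploit $\Delta\notin\mathbb{Q}$ to average over the orbit of $\theta$ under the irrational rotation $\theta\mapsto\theta+\Delta$ and use a Fubini/ergodic argument (the rotation is uniquely ergodic) showing that the direction-dependence washes out in the limit, so that $\mathrm{dim}\big((m_{\mathbf{j}(x)})^{\eta_{\theta+|\mathbf{j}|\Delta}}_{\cdot}\big)$ has the same a.e.\ value as $\mathrm{dim}\big((m_x)^{\eta_\theta}_{\cdot}\big)$. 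Combined with the Feng--Hu lower-semicontinuity and the component-measure heredity arguments (and the convexity/concavity estimates of Lemma \ref{lem:concave}), this upgrades the a.e.-measurable fibre dimension to the a.e.-constant $\upsilon$, and symmetrically gives $\beta$; the identity $\alpha=\beta+\upsilon$ then follows from the additivity of conditional entropy established above. I would also record, for later use in Theorem \ref{thm:entropygrowth}, that the convergence of $\frac1n H(\pi_\theta m_x,\mathcal{L}_n)$ to $\beta$ and of the conditional version to $\upsilon$ is uniform after averaging, since that averaged form is what the inverse-theorem application will consume.
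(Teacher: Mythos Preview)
Your high-level strategy --- encode $m_x$ symbolically via $S_x:\Sigma\to\mathbb{C}$, exploit the cylinder relation (\ref{F}), and run a Feng--Hu/Falconer--Jin type argument together with the irrational rotation $\theta\mapsto\theta-\Delta$ --- is exactly the route the paper takes. However, the implementation you outline has two genuine gaps.

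First, there is a circularity in your treatment of exact-dimensionality. You write that $\tfrac{1}{n}H(\pi_\theta m_x,\mathcal{L}_n)$ converges to a constant $\beta$ and then invoke Proposition~\ref{prop:Young} ``once exact-dimensionality of $\pi_\theta m_x$ is known''; likewise you propose to read off $\dim\big((m_x)^{\eta_\theta}_z\big)$ from the entropy difference $H(m_x,\mathcal{L}_n)-H(\pi_\theta m_x,\mathcal{L}_n)$. But convergence of these \emph{global} entropies does not by itself prove that the \emph{local} dimension $\lim_{r\to 0}\log\mu(B(z,r))/\log r$ exists for a.e.\ $z$, nor that the conditional measures are exact-dimensional. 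The paper does not go through entropy here: it computes the local dimension directly. Using the symbolic balls $\mathbf{B}_g(\mathbf{j},n)$ of (\ref{def:ball}) and the telescoping identity of Lemma~\ref{lem:projection}, one writes $\log\nu^{\mathbb{Z}_+}\big(\mathbf{B}_{\pi_\theta\circ S_x}(\mathbf{j},n)\big)$ as a Birkhoff-type sum $\sum_{k=0}^{n-1} g_{n-k}\circ\hat{T}^k$ along the orbit of a single map $\hat{T}$ on $\Sigma\times[0,1)^2$, and then proves the limit exists and is a.e.\ constant (Lemmas~\ref{lem:D.1}, \ref{lem:D.2}, \ref{lem:D.0}). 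Exact-dimensionality and the value of the dimension come out simultaneously from this computation, not as separate steps.

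Second, your ergodic step is underspecified in a way that matters. You appeal to unique ergodicity of the rotation and a ``Fubini/ergodic argument'', but two further ingredients are needed. One is that the relevant dynamical system is the \emph{joint} map $\hat{T}(\mathbf{j},x,\theta)=(\lfloor bx\rfloor\mathbf{j},\,bx\bmod 1,\,\theta-\Delta\bmod 1)$ on $\Sigma\times[0,1)^2$, and it is ergodicity of $\hat{T}$ (Lemma~\ref{lem:ergodic}, proved via Parry's criterion for compact group extensions) that forces the local-dimension limit to be a single constant in $(x,\theta)$ rather than merely measurable. The other is that the summands $g_{n-k}$ above depend on \emph{both} $k$ and $n$, so the ordinary Birkhoff theorem does not apply; the paper invokes Maker's ergodic theorem (Theorem~\ref{thm:Maker}) precisely to handle this time-dependent observable, after first checking via \cite[Proposition~3.5]{Feng} that $g_n$ converges a.e.\ and is dominated by an integrable function (Lemma~\ref{lem:Feng0}). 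Without these two pieces your sketch does not close, even though the overall architecture is the right one.
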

Recall $\Delta=\Delta(\gamma)$ is defined by (\ref{def:Delta}).
We only consider the case $\Delta\notin\mathbb{Q}$ in the rest of paper.
The following is a  corollary of Theorem \ref{thm:DC} by the observation of Hochman \cite[Lemma 3.21(5)]{hochman2021}.  We  offer the proof for the completeness.

\begin{corollary}\label{cor:relation}
	If $\alpha\ge\beta+1$, then $\alpha\ge2.$
\end{corollary}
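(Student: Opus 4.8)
The plan is to combine the dimension conservation statement of Theorem \ref{thm:DC} with the two-dimensional ambient space constraint, using the numerology $\alpha = \beta + \upsilon$ together with $\beta \le 1$. First I would observe that Theorem \ref{thm:DC} gives, for Lebesgue-a.e.\ $(x,\theta) \in [0,1)^2$, that $m_x$ is $\theta$-dimension conservation with $\dim(\pi_\theta m_x) = \beta$ and $\dim\big((m_x)^{\eta_\theta}_z\big) = \upsilon$ for $m_x$-a.e.\ $z$; hence by Definition \ref{def:m}(2),
\begin{equation}\nonumber
\alpha = \dim(m_x) = \dim(\pi_\theta m_x) + \dim\big((m_x)^{\eta_\theta}_z\big) = \beta + \upsilon .
\end{equation}
Here I am using that $\dim(m_x) = \alpha$ for Lebesgue-a.e.\ $x$, which is the content of Theorem \ref{TheoremB}(2), so that for a.e.\ $(x,\theta)$ both statements hold simultaneously and the identity $\alpha = \beta + \upsilon$ is forced.

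Next I would bound the two pieces separately. Since $\pi_\theta m_x$ is a measure on $\mathbb{R}$, it is supported on a set of Hausdorff dimension at most $1$, so $\beta = \dim(\pi_\theta m_x) \le 1$. Similarly, the conditional measure $(m_x)^{\eta_\theta}_z$ lives on the fiber $\pi_\theta^{-1}(\{\pi_\theta(z)\})$, which is an affine line in $\mathbb{C} \cong \mathbb{R}^2$, hence one-dimensional, giving $\upsilon \le 1$. Now suppose $\alpha \ge \beta + 1$. Combining with $\alpha = \beta + \upsilon$ yields $\beta + \upsilon \ge \beta + 1$, i.e.\ $\upsilon \ge 1$; together with $\upsilon \le 1$ this forces $\upsilon = 1$. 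Therefore $\alpha = \beta + \upsilon = \beta + 1 \ge 1$, and since $\beta \ge 0$ we would like $\beta \ge 1$ to conclude $\alpha \ge 2$; indeed the hypothesis $\alpha \ge \beta + 1$ combined with $\alpha = \beta + \upsilon \le \beta + 1$ shows $\alpha = \beta + 1$ exactly, so it remains to see $\beta \ge 1$. But $\alpha \le 2$ always (Theorem \ref{TheoremB}(2)), and if $\beta < 1$ were strict we would get $\alpha = \beta + 1 < 2$ with no contradiction yet — so the argument must instead extract $\beta = 1$ from the structure.

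The real point, and the step I expect to be the main obstacle, is ruling out $\beta < 1$: one must show that if the fiber dimension $\upsilon$ is already maximal (equal to $1$), then the projection dimension $\beta$ cannot simultaneously be deficient, i.e.\ some rigidity prevents $m_x$ from having full-dimensional fibers yet a thin projection when $\alpha \ge \beta + 1$. I would handle this by invoking the observation of Hochman \cite[Lemma 3.21(5)]{hochman2021} cited just before the corollary: the dimension conservation identity must hold for \emph{every} $\theta$ in a set of full measure, and the constants $\beta, \upsilon$ are independent of $\theta$. If $\upsilon = 1$ for a.e.\ $\theta$, then for a.e.\ $\theta$ the fibers of $\pi_\theta$ through $m_x$ carry a measure of dimension $1$; feeding this back through the conservation identity $\alpha = \beta + 1$ and comparing with the reverse inequality $\dim(m_x) \le \dim(\pi_\theta m_x) + 1$ (projection to a line plus fiber dimension at most one) shows these are consistent only if $\beta$ absorbs the remaining dimension, forcing $\beta = \alpha - 1 \ge \beta$, an identity — so the contradiction must come from a genuinely different direction. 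The cleanest route, which I would adopt, is: from $\alpha = \beta + \upsilon$, $\upsilon \le 1$, and the hypothesis $\alpha \ge \beta + 1$ we get $\upsilon = 1$ and $\alpha = \beta + 1$; then since $\upsilon = 1$ means the conditional measures are genuinely one-dimensional along a.e.\ line direction, a Fubini/marginal argument (or directly Theorem \ref{thm:DC} applied with the roles of the two coordinates exploited via the irrationality of $\Delta$) forces $m_x$ to dominate two-dimensional Lebesgue-type behavior transversally, whence $\alpha = \dim(m_x) \ge 2$. Thus the crux is translating "$\upsilon = 1$ for a.e.\ direction $\theta$" into "$\dim(m_x) \ge 2$", which is exactly a slicing/projection theorem in the plane, and I would cite or adapt the relevant statement rather than reprove it.
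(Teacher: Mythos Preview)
Your argument has a genuine gap at exactly the point you flag as ``the crux'': you reduce to $\alpha = \beta + 1$ with $\upsilon = 1$, but then you need $\beta \ge 1$ and you do not establish it. The attempts you sketch---comparing $\alpha = \beta + 1$ to the trivial upper bound $\dim(m_x) \le \dim(\pi_\theta m_x) + 1$, invoking Hochman's Lemma 3.21(5), or an unspecified ``Fubini/marginal'' or ``slicing/projection'' theorem---either collapse to identities (as you yourself note) or are never made precise. Knowing that the fiber dimension equals $1$ for a.e.\ direction does not by itself force $\beta = 1$ without some additional input relating projections in \emph{different} directions; a single direction, together with dimension conservation, only recovers the tautology $\alpha = \beta + 1$.

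The paper does not use the conditional (fiber) dimension $\upsilon$ at all. Instead it exploits (D.1) for \emph{two} distinct directions $\theta_1 \neq \theta_2$: since the partitions $\pi_{\theta_1}^{-1}(\mathcal{L}_n^{\mathbb{R}})$ and $\pi_{\theta_2}^{-1}(\mathcal{L}_n^{\mathbb{R}})$ together generate $\mathcal{L}_n^{\mathbb{C}}$ up to $O_{\theta_1,\theta_2}(1)$, an entropy chain-rule computation yields
\[
\tfrac{1}{n} H(m_x, \mathcal{L}_n^{\mathbb{C}}) \;\ge\; \sum_{k=1}^{2}\Big(\tfrac{1}{n}H(m_x,\mathcal{L}_n^{\mathbb{C}}) - \tfrac{1}{n}H(\pi_{\theta_k} m_x,\mathcal{L}_n^{\mathbb{R}})\Big) + O(1/n),
\]
and letting $n\to\infty$ gives $\alpha \ge 2(\alpha - \beta)$, i.e.\ $\alpha \le 2\beta$. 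Combined with the hypothesis $\alpha \ge \beta + 1$ this forces $\beta \ge 1$, hence $\beta = 1$ and $\alpha \ge 2$. The key idea you are missing is precisely this leverage from a second projection direction; once you have $\alpha \le 2\beta$, your own reduction finishes immediately. (An alternative route that would also close your gap is Marstrand's projection theorem for exact-dimensional measures, which gives $\beta = \min(\alpha,1)$ directly---but this is not what the paper does, and your proposal does not name it.)
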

\begin{proof}
	By Theorem \ref{thm:DC} there exists $x,\,\theta_1,\,\theta_2\in[0,1)$ such that
	$\theta_1\neq\theta_2$ and the following holds:
	 $m_x,\,\pi_{\theta_j}m_x$ are exact-dimensional and $\text{dim}(m_x)=\alpha$, $\text{dim}(\pi_{\theta_j}m_x)=\beta$, $ j=1,2.$
	For any $n\in\mathbb{Z}_+$, since each atom of $\pi_{\theta_1}^{-1}(\mathcal{L}^{\mathbb{R}}_n)\bigvee\pi_{\theta_2}^{-1}(\mathcal{L}^{\mathbb{R}}_n)$ intersects $\boldmath{O}_{\theta_1,\,\theta_2}(1)$ atoms of $\mathcal{L}^{\mathbb{C}}_n$ and vice versa, we have
	\begin{equation}\label{lem:DC1}
	\begin{aligned}
	H(m_x,\,\mathcal{L}^{\mathbb{C}}_n)&=H\big(m_x,\,\pi_{\theta_1}^{-1}(\mathcal{L}^{\mathbb{R}}_n)\vee\pi_{\theta_2}^{-1}(\mathcal{L}^{\mathbb{R}}_n)\big)+\boldmath{O}_{\theta_1,\,\theta_2}(1)
	\\&=H(\,m_x,\,\pi_{\theta_1}^{-1}(\mathcal{L}^{\mathbb{R}}_n) )+H\big(m_x,\,\pi_{\theta_2}^{-1}(\mathcal{L}^{\mathbb{R}}_n)\mid\pi_{\theta_1}^{-1}(\mathcal{L}^{\mathbb{R}}_n)\big)+\boldmath{O}_{\theta_1,\,\theta_2}(1)
	\\&\ge H\big(m_x,\,\pi_{\theta_1}^{-1}(\mathcal{L}^{\mathbb{R}}_n)\mid\pi_{\theta_2}^{-1}(\mathcal{L}^{\mathbb{R}}_n)\big)+H\big(m_x,\,\pi_{\theta_2}^{-1}(\mathcal{L}^{\mathbb{R}}_n)\mid\pi_{\theta_1}^{-1}(\mathcal{L}^{\mathbb{R}}_n)\big)+\boldmath{O}_{\theta_1,\,\theta_2}(1).
	\end{aligned}
	\end{equation}
	The above also implies
	$$H\big(m_x,\,\pi_{\theta_1}^{-1}(\mathcal{L}^{\mathbb{R}}_n)\mid\pi_{\theta_2}^{-1}(\mathcal{L}^{\mathbb{R}}_n)\big)=H(m_x,\,\mathcal{L}^{\mathbb{C}}_n)-H(\pi_{\theta_2} m_x,\,\mathcal{L}^{\mathbb{R}}_n)+\boldmath{O}_{\theta_1,\,\theta_2}(1)$$
	and
	$$H\big(m_x,\,\pi_{\theta_2}^{-1}(\mathcal{L}^{\mathbb{R}}_n)\mid\pi_{\theta_1}^{-1}(\mathcal{L}^{\mathbb{R}}_n)\big)=H(m_x,\,\mathcal{L}^{\mathbb{C}}_n)-H(\pi_{\theta_1} m_x,\,\mathcal{L}^{\mathbb{R}}_n)+\boldmath{O}_{\theta_1,\,\theta_2}(1).$$
	Combining these with (\ref{lem:DC1}), we have 
	$$\frac1nH(m_x,\,\mathcal{L}^{\mathbb{C}}_n)\ge\sum_{k=1}^2\bigg(\frac1n H(m_x,\,\mathcal{L}^{\mathbb{C}}_n)-\frac1n H(\pi_{\theta_k} m_x,\,\mathcal{L}^{\mathbb{R}}_n)\bigg)+\boldmath{O}_{\theta_1,\,\theta_2}(\frac1n).$$
	Therefore when $n$ goes to infinity, we have
	$$\alpha\ge 2(\alpha-\beta)$$
	by Proposition \ref{prop:Young}, which implies $\alpha\ge2.$
\end{proof}
\begin{remark}
	Under the observation of Corollary \ref{cor:relation},  we only need Theorem \ref{thm:DC} (D.1) to prove Theorem \ref{thm:entropygrowth}. For completeness and the readers to understand our ideas, we still introduce  Theorem \ref{thm:DC} (D.2) and give the proof.
\end{remark} 
	
\subsection{Group extension} By the classical Ledrappier-Young theory \cite{ledrappier1992dimension,ledrappier1985metric, Shu2010}, we can only get the formula (\ref{LedrapperYoungF}). To study the dimension of $m_x$, it is important to understand the projection measures $\pi_{\theta}m_x$ and conditional measures $(m_x)^{\eta_{\theta}}_{z}$. Thus it is nature to consider the map
\begin{equation}\nonumber
\begin{aligned}
\hat{T}:\,&\Sigma\times[0,1)^2\,\to \,\Sigma\times[0,1)^2
\\&(\,\textbf{j},\,x,\,\theta)\,\mapsto\, (\,\lfloor bx\rfloor\,\textbf{j},\,bx\mod 1,\,\theta-\Delta\mod 1)
\end{aligned}
\end{equation}
where $\lfloor bx\rfloor$ is the largest integer not greater than $bx$ and see (\ref{eq:symbolsum}) for $\lfloor bx\rfloor\,\textbf{j}$. Recall $\mathcal{m}$ is the Lebesgue measure on $[0,1)$. Let 
$$\tilde{\omega}:=\nu^{\mathbb{Z}_+}\times\mathcal{m}\times\mathcal{m}$$
and $\mathcal{B}$ be the Borel set of $\Sigma\times[0,1)^2$.
For any metric space $(X,\,\rho)$, denote $\mathcal{B}_X$ as the Borel set of $X$.
 Let us consider the probability space $(\Sigma\times[0,1)^2,\,\mathcal{B},\,\tilde{\omega})$ and we have the following Lemma \ref{lem:ergodic}.
\begin{lemma}\label{lem:ergodic}
	If $\Delta\notin\mathbb{Q}$, then
  $\hat{T}$ is ergodic.
\end{lemma}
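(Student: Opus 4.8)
The plan is to recognize $\hat T$ as a \emph{direct product} of two maps and then use that a product of a weakly mixing transformation with an ergodic one is ergodic. Write $\hat S:\Sigma\times[0,1)\to\Sigma\times[0,1)$ for $\hat S(\textbf{j},x)=(\lfloor bx\rfloor\,\textbf{j},\,bx\bmod1)$ and $R:[0,1)\to[0,1)$ for $R(\theta)=\theta-\Delta\bmod1$, so that $\hat T=\hat S\times R$ on $\big(\Sigma\times[0,1),\,\nu^{\mathbb{Z}_+}\times\mathcal{m}\big)\times\big([0,1),\mathcal{m}\big)$, and $\hat T$ preserves $\tilde\omega=\nu^{\mathbb{Z}_+}\times\mathcal{m}\times\mathcal{m}$. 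One checks on cylinder-times-interval sets that $\hat S$ preserves $\nu^{\mathbb{Z}_+}\times\mathcal{m}$ and is invertible, with inverse $(j_1j_2\ldots,x)\mapsto(j_2j_3\ldots,(j_1+x)/b)$; in fact $\hat S$ is measurably isomorphic to the two-sided Bernoulli shift on $\{0,\ldots,b-1\}^{\mathbb{Z}}$ with the uniform product measure (it is the natural extension of $x\mapsto bx\bmod1$), hence is mixing of all orders, in particular weakly mixing. Since $\Delta\notin\mathbb{Q}$, $R$ is an irrational rotation and therefore ergodic. The classical criterion ``weakly mixing $\times$ ergodic $=$ ergodic'' then yields that $\hat S\times R=\hat T$ is ergodic.

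To keep this self-contained one can replace that black box by a short Fourier computation. Let $f\in L^2(\tilde\omega)$ satisfy $f\circ\hat T=f$, and expand in the last variable: $f(\textbf{j},x,\theta)=\sum_{n\in\mathbb{Z}}c_n(\textbf{j},x)\,e^{2\pi in\theta}$ with $c_n\in L^2(\nu^{\mathbb{Z}_+}\times\mathcal{m})$ and $\sum_n\|c_n\|_2^2=\|f\|_2^2$. Invariance of $f$ forces, for each $n$,
$$c_n\circ\hat S=e^{2\pi in\Delta}\,c_n\qquad(\nu^{\mathbb{Z}_+}\times\mathcal{m})\text{-a.e.}$$
Taking absolute values, $|c_n|$ is $\hat S$-invariant, hence a.e.\ constant by ergodicity of $\hat S$. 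If that constant is $0$ then $c_n\equiv0$. Otherwise $g_n:=c_n/|c_n|$ is a.e.\ defined, has modulus $1$, and satisfies $g_n\circ\hat S=e^{2\pi in\Delta}g_n$, so $e^{2\pi in\Delta}$ is an eigenvalue of the Koopman operator of $\hat S$. Since $\hat S$ is weakly mixing its only eigenvalue is $1$, whence $n\Delta\in\mathbb{Z}$, and as $\Delta\notin\mathbb{Q}$ this forces $n=0$. Therefore $f=c_0(\textbf{j},x)$ with $c_0\circ\hat S=c_0$, and ergodicity of $\hat S$ makes $c_0$ constant; thus $f$ is a.e.\ constant and $\hat T$ is ergodic.

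The genuine inputs are only: (a) ergodicity and weak mixing (trivial point spectrum) of $\hat S$, and (b) ergodicity of the irrational rotation $R$. Item (b) is classical, and (a) reduces to identifying $\hat S$ with the natural extension of $x\mapsto bx\bmod1$, equivalently with the Bernoulli shift on $\{0,\ldots,b-1\}^{\mathbb{Z}}$ under the uniform product measure, which is mixing of all orders. I expect the only (modest) obstacle to be writing down that isomorphism cleanly---exhibiting the coding $(\textbf{j},x)\mapsto$ two-sided digit sequence, checking it intertwines $\hat S$ with the shift and pushes $\nu^{\mathbb{Z}_+}\times\mathcal{m}$ to the Bernoulli measure---after which the remainder is standard ergodic theory.
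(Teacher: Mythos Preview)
Your argument is correct. Both you and the paper recognize the product structure and reduce to the two-sided Bernoulli shift crossed with the rotation by $-\Delta$; the difference is only in how the ergodicity of that product is established. The paper writes down the factor map $\Pi:\varLambda^{\mathbb Z}\times[0,1)\to\Sigma\times[0,1)^2$, $(\textbf{j},\theta)\mapsto(j_0j_{-1}\ldots,\sum_{k\ge1}j_k b^{-k},\theta)$, intertwining $F=\tau\times R_{-\Delta}$ with $\hat T$, and then treats $F$ as a skew product over the shift with a \emph{constant} $[0,1)$-valued cocycle $\alpha\equiv-\Delta$; it invokes Parry's cocycle criterion (non-ergodicity $\Leftrightarrow$ the cocycle is cohomologous into a proper closed subgroup), and rules this out by evaluating the coboundary equation at the fixed point $1_\infty$, forcing $-\Delta\in H$ and hence $H=[0,1)$. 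Your route instead uses the spectral fact that the Bernoulli shift is weakly mixing, so the only Koopman eigenvalue is $1$; the Fourier-mode computation you give is exactly the standard proof of ``weakly mixing $\times$ ergodic $=$ ergodic'' specialized to a circle factor. Your version is arguably more elementary (no appeal to Parry's theorem, and the eigenvalue argument is two lines), while the paper's cocycle viewpoint would generalize more readily to non-constant group extensions. The only place to be slightly careful in writing it up is the identification of $(\hat S,\nu^{\mathbb Z_+}\times\mathcal m)$ with the two-sided Bernoulli shift, which you already flag; the coding you describe, $(\textbf j,x)\mapsto(\ldots,j_2,j_1,\lfloor bx\rfloor,\lfloor b\{bx\}\rfloor,\ldots)$, is an a.e.\ invertible, measure-preserving conjugacy with the shift, and that is all that is needed.
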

\begin{proof}
	Let $F: \varLambda^{\mathbb{Z}}\times[0,1)\to\varLambda^{\mathbb{Z}}\times[0,1)$
	be the map such that $(\textbf{j},\,\theta)\,\mapsto(\tau(\textbf{j}),\,\theta-\Delta\mod1)$
	where $\tau$ is the left shift map on $\varLambda^{\mathbb{Z}}$.
	Let us consider the probability space $(\varLambda^{\mathbb{Z}}\times[0,1),\,\mathcal{B}_{\varLambda^{\mathbb{Z}}\times[0,1)},\,\nu^{\mathbb{Z}}\times\mathcal{m})$.
   It suffice to proof that $F$ is ergodic, since $\Pi\circ F=\hat{T}\circ\Pi$ where $\Pi:\,\varLambda^{\mathbb{Z}}\times[0,1)\,\to\Sigma\times[0,1)\times [0,1)$ is the map such that
	$$(\textbf{j},\,\theta)\,\mapsto\,\bigg(j_0j_{-1}\ldots,\,\sum_{k=1}^{\infty}\frac{j_k}{b^k},\,\theta\bigg).$$
	
	Regard $[0,1) $ as a group with the addition
	$$a+b:=(a+b)\mod1\quad\quad\forall\, a,b\in [0,1 )$$
	as usual.
	Assume $F$ is not ergodic, then there exists a proper closed subgroup $H$ of $[0,1) $ and functions 
	$f'\in C(\varLambda^{\mathbb{Z}};\,H),\,h\in C(\varLambda^{\mathbb{Z}};\,[0,1) )$ such that $\Delta=h\circ\tau-f'-h$ by \cite[Theorem 5.1]{Parry}. 
	Let $1_{\infty}=\ldots111\ldots\in\Lambda^{\mathbb{Z}}$. Thus
	$f'(1_{\infty})=-\Delta\in H$, which  contradicts with that  $H$ is a proper closed subgroup of $[0,1) $.
\end{proof}
\subsection{Proof of Theorem \ref{thm:DC} (D.1)}\label{sec:d1}
In the rest of paper we denote $\log$ as $\log_b$ for convenience.
Let $R^{\phi}_{\gamma}:=\frac{2\parallel\phi\parallel_{\infty}}{1-|\gamma|}.$
Following \cite{Feng},
for any function $g:\Sigma\to\mathbb{R}^d$ and every $\textbf{j}\in\Sigma$, $n\in\mathbb{N}$, let
\begin{equation}\label{def:ball}
\mathbf{B}_{g}(\textbf{j},\,n):=g^{-1}\bigg(\,\mathbf{B}\big(g(\textbf{j}),\,R^{\phi}_{\gamma}|\gamma|^n\,\big)\,\bigg).
\end{equation}
Let $\tau$ be the left shift on $\Sigma$ as usual.
 Let $\mathcal{P}$ be the partition of $\Sigma$ such that $$\mathcal{P}=\{\,[k]\,:\,k\in\varLambda\,\}$$ where $[k]=\{\,\textbf{j}\in\Sigma:j_1=k\,\}.$
 It is necessary to recall the following basic facts about $\pi_{\theta}$. For any $\theta\in\mathbb{R}$, we have
 \begin{equation}\label{eq:projectionfunction}
 \pi_{\theta}(\gamma z)=|\gamma|\pi_{\theta-\Delta}(z)\quad\quad\forall z\in\mathbb{C}.
 \end{equation}
 Therefore (\ref{T^nm_x}) implies: for each $n\in\mathbb{Z}_+$ and $\textbf{w}\in\varLambda^n$, we have
 \begin{equation}\label{eq:projectionmeasure}
 \pi_{\theta}(T^nm_{\textbf{w}(x)})=g_{\textbf{w},\,\theta}(\pi_{\theta-n\Delta}m_{\textbf{w}(x)})
 \end{equation}
 where function $g_{\textbf{w},\,\theta}(z):=|\gamma|^nz+\pi_{\theta}\circ S_x(\textbf{w})$ for each $z\in\mathbb{R}$.
 
\begin{lemma}\label{lem:projection}
For any $\theta,x\in[0,1], $ and $\textbf{j}\in\Sigma$, $n\in\mathbb{N}$, we have
\begin{equation}\label{lem:symbal}
\mathbf{B}_{\pi_{\theta}\circ S_x}(\textbf{j},\,n+1)\cap\mathcal{P}(\textbf{j})=\tau^{-1}\bigg(\,\mathbf{B}_{\pi_{\theta-\Delta}\circ S_{(x+j_1)/b}}(\tau(\textbf{j}),\,n)\,\bigg)\cap\mathcal{P}(\textbf{j}).
\end{equation}
\end{lemma}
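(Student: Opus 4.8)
The plan is to reduce the identity (\ref{lem:symbal}) to the basic cocycle relation (\ref{F}) for $S$ together with the twisted linearity (\ref{eq:projectionfunction}) of the projections $\pi_\theta$, exploiting that on the cylinder $\mathcal{P}(\textbf{j})=[j_1]$ every point shares its first symbol $j_1$ with $\textbf{j}$, so that the contribution of the first coordinate cancels.

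First I would record the relevant pointwise identity. Fix $\textbf{j}\in\Sigma$ with first symbol $j_1$, and let $\textbf{j}'\in\mathcal{P}(\textbf{j})=[j_1]$ be arbitrary, so $\textbf{j}'=j_1\,\tau(\textbf{j}')$. Applying (\ref{F}) with $m=1$ and $\textbf{w}=j_1$, and noting $j_1(x)=(x+j_1)/b$ by (\ref{eq:symbolfunction}), we get $S_x(\textbf{j}')=S(x,j_1)+\gamma\,S_{(x+j_1)/b}(\tau(\textbf{j}'))$. Since $\pi_\theta$ is $\mathbb{R}$-linear and satisfies $\pi_\theta(\gamma z)=|\gamma|\,\pi_{\theta-\Delta}(z)$ by (\ref{eq:projectionfunction}), this gives
$$\pi_\theta\circ S_x(\textbf{j}')=\pi_\theta\big(S(x,j_1)\big)+|\gamma|\,\pi_{\theta-\Delta}\circ S_{(x+j_1)/b}(\tau(\textbf{j}')).$$
The same identity applied to $\textbf{j}$ itself (which also lies in $[j_1]$), followed by subtraction, kills the constant term $\pi_\theta(S(x,j_1))$, so
$$\pi_\theta\circ S_x(\textbf{j}')-\pi_\theta\circ S_x(\textbf{j})=|\gamma|\Big(\pi_{\theta-\Delta}\circ S_{(x+j_1)/b}(\tau(\textbf{j}'))-\pi_{\theta-\Delta}\circ S_{(x+j_1)/b}(\tau(\textbf{j}))\Big).$$

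Next I would unwind the definition (\ref{def:ball}) of the symbolic balls. By that definition, for $\textbf{j}'\in\mathcal{P}(\textbf{j})$ one has $\textbf{j}'\in\mathbf{B}_{\pi_\theta\circ S_x}(\textbf{j},n+1)$ if and only if $|\pi_\theta\circ S_x(\textbf{j}')-\pi_\theta\circ S_x(\textbf{j})|\le R^\phi_\gamma|\gamma|^{n+1}$; by the displayed identity this is equivalent to $|\gamma|\cdot\big|\pi_{\theta-\Delta}\circ S_{(x+j_1)/b}(\tau(\textbf{j}'))-\pi_{\theta-\Delta}\circ S_{(x+j_1)/b}(\tau(\textbf{j}))\big|\le R^\phi_\gamma|\gamma|^{n+1}$, hence, dividing by $|\gamma|>0$, to $\tau(\textbf{j}')\in\mathbf{B}_{\pi_{\theta-\Delta}\circ S_{(x+j_1)/b}}(\tau(\textbf{j}),n)$, that is, to $\textbf{j}'\in\tau^{-1}\big(\mathbf{B}_{\pi_{\theta-\Delta}\circ S_{(x+j_1)/b}}(\tau(\textbf{j}),n)\big)$. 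Intersecting both descriptions with $\mathcal{P}(\textbf{j})$ yields (\ref{lem:symbal}).

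There is essentially no serious obstacle here: the computation is elementary once (\ref{F}) and (\ref{eq:projectionfunction}) are in place. The only points requiring care are the bookkeeping of how $\pi_\theta$ transforms under $z\mapsto\gamma z$ — precisely the ``rotate the direction by $\Delta$, scale the modulus by $|\gamma|$'' behaviour encoded in (\ref{eq:projectionfunction}) — and the observation that restricting to $\mathcal{P}(\textbf{j})=[j_1]$ is exactly what makes the first-symbol term $S(x,j_1)$ cancel; in particular neither analyticity of $\phi$ nor any transversality is used.
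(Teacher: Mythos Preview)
Your proof is correct and follows essentially the same route as the paper: both reduce the statement to the cocycle relation (\ref{F})/(\ref{FM}) combined with the twist identity (\ref{eq:projectionfunction}), cancelling the first-symbol contribution on the cylinder $[j_1]$ and dividing by $|\gamma|$. The only cosmetic difference is that you phrase the computation as a single equivalence (giving both inclusions at once), whereas the paper shows one inclusion and notes the converse is identical.
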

\begin{proof}
	For any $\textbf{i}\in\Sigma$ such that $i_1=j_1$ and
	$|\pi_{\theta}\circ S_x(\textbf{j})-\pi_{\theta}\circ S_x(\textbf{i})|\le R^{\phi}_{\gamma}|\gamma|^{n+1}$, we have
	$$\bigg|\,\pi_{\theta}\bigg(\gamma S_{(x+j_1)/b }\circ\tau(\textbf{j})- \gamma S_{(x+j_1)/b}\circ\tau(\textbf{i})\bigg)\,\bigg|\le R^{\phi}_{\gamma}|\gamma|^{n+1}$$
	by the definition of function $S_x$ and (\ref{FM}). This implies
	$$\bigg|\,\pi_{\theta-\Delta}\bigg( S_{(x+j_1)/b}\circ\tau(\textbf{j})- S_{(x+j_1)/b}\circ\tau(\textbf{i})\bigg)\,\bigg|\le R^{\phi}_{\gamma}|\gamma|^{n}$$
    by (\ref{eq:projectionfunction}). Combining the above with (\ref{def:ball}), we have
	$$\mathbf{B}_{\pi_{\theta}\circ S_x}(\textbf{j},\,n+1)\cap\mathcal{P}(\textbf{j})\subset\tau^{-1}\bigg(\mathbf{B}_{\pi_{\theta-\Delta}\circ S_{(x+j_1)/b}}(\tau(\textbf{j}),\,n)\bigg)\cap\mathcal{P}(\textbf{j}).$$
	Finally the other side can be proved by the same method.
\end{proof}
Let $\eta$  be the partition of $\Sigma\times[0,1)^2 $ such that $$\eta:=\big\{\,\Sigma\times\{x\}\times\{\theta\}:\,\theta,x\in[0,1)\, \big\}$$
and $\hat{\eta}$ be the  sub-$\sigma$-algebra of $\mathcal{B}$ generated by  $\eta$.
Let $\tilde{\mathcal{P}}$ be the sub-$\sigma$-algebra of $\mathcal{B}$ generated by partition
 $$\big\{\,[k]\times[0,1)^2:\,k\in\varLambda\,\big\}.$$
 Define the function
 $$\Phi:\Sigma\times[0,1)^2\to\mathbb{R}\quad\quad \Phi(\,\textbf{j},\,x,\,\theta)=\pi_{\theta}\circ S_x(\textbf{j}).$$
Let $\mathcal{B}_{\Phi}:=\Phi^{-1}\big(\mathcal{B}_{\mathbb{R}}\big)$ be the sub-$\sigma$-algebra of $\mathcal{B}$. The following is an immediate consequence of \cite[Proposition 3.5]{Feng}.
\begin{lemma}\label{lem:Feng0}
	For $\hat{\omega}$-a.e. $(\,\textbf{j},x,\theta)\in\Sigma\times[0,1)^2 $ we have
	\begin{equation}\label{lem:Feng}
	\lim_{n\to\infty}\log\frac{\nu^{\mathbb{Z}_+}\big(\mathbf{B}_{\pi_{\theta}\circ S_x}(\textbf{j},\,n)\cap\mathcal{P}(\textbf{j})\big)}{\nu^{\mathbb{Z}_+}\big(\mathbf{B}_{\pi_{\theta}\circ S_x}(\textbf{j},\,n)\big)}=-\mathbf{I}_{\hat{\omega}}\big(\tilde{\mathcal{P}}|\hat{\eta}\vee\mathcal{B}_{\Phi}\big)(\,\textbf{j},x,\theta).
	\end{equation}
	Furthermore, set 
	$$g(\textbf{j},x,\theta)=\sup_{n\in\mathbb{N}}-\log \frac{\nu^{\mathbb{Z}_+}\big(\mathbf{B}_{\pi_{\theta}\circ S_x}(\textbf{j},\,n)\cap\mathcal{P}(\textbf{j})\big)}{\nu^{\mathbb{Z}_+}\big(\mathbf{B}_{\pi_{\theta}\circ S_x}(\textbf{j},\,n)\big)}.$$
	Then $g\ge0$ and $g\in L^1(\Sigma\times[0,1)^2,\,\mathcal{B},\,\hat{\omega}).$
\end{lemma}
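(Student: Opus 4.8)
The plan is to deduce Lemma~\ref{lem:Feng0} directly from the Feng--Hu theory of conditional (``projected'') entropies, \cite[Proposition~3.5]{Feng}, after recasting our fibered system in their symbolic--dynamical language. The relevant system is $(\Sigma\times[0,1)^2,\mathcal{B},\hat{\omega},\hat{T})$, where $\hat{\omega}=\tilde{\omega}=\nu^{\mathbb{Z}_+}\times\mathcal{m}\times\mathcal{m}$ is $\hat{T}$-invariant and, by Lemma~\ref{lem:ergodic}, ergodic; the finite generating partition is $\tilde{\mathcal{P}}$ in the $\textbf{j}$-coordinate, the relevant sub-$\sigma$-algebras are $\hat{\eta}$ and $\mathcal{B}_{\Phi}$, and $\Phi(\textbf{j},x,\theta)=\pi_{\theta}\circ S_x(\textbf{j})$ is a bounded observable whose push-forward of $\nu^{\mathbb{Z}_+}$ along a fiber $\{x\}\times\{\theta\}$ is $\pi_{\theta}m_x$. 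In this dictionary $\mathbf{B}_{\pi_{\theta}\circ S_x}(\textbf{j},n)$ is, fiberwise, the $\mathcal{B}_{\Phi}$-approximant at the deterministic scale $R^{\phi}_{\gamma}|\gamma|^n$ of the $(\hat{\eta}\vee\mathcal{B}_{\Phi})$-atom through $(\textbf{j},x,\theta)$. The one input from the dynamics of $T$ that \cite[Proposition~3.5]{Feng} requires is a one-step self-referential relation for these approximants along the orbits of $\hat{T}$ --- passing from scale level $n+1$ to level $n$ inside a $1$-cylinder amounts to a shift on $\Sigma$ together with the affine contraction on $x$ and the rotation $\theta\mapsto\theta-\Delta$ --- and this is exactly Lemma~\ref{lem:projection}, which follows from the cocycle identities (\ref{F})--(\ref{FM}) and the scaling rule (\ref{eq:projectionfunction}). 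Together with the finiteness of $H_{\hat{\omega}}(\tilde{\mathcal{P}})$ and the boundedness of $\Phi$, this verifies the hypotheses of \cite[Proposition~3.5]{Feng}; since $|\gamma|$ is constant, ours is the benign, deterministic-ratio instance of the random self-similar framework of \cite{Falconer}.

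Granting this, the limit (\ref{lem:Feng}) is obtained as follows. Fix $(x,\theta)$, write $\mu=\pi_{\theta}m_x$ and, for $k\in\varLambda$, let $\mu^{(k)}$ be the push-forward of $\nu^{\mathbb{Z}_+}$ restricted to $[k]$ under $\pi_{\theta}\circ S_x$, so $\mu=\sum_{k\in\varLambda}\mu^{(k)}$ and $\mu^{(k)}\ll\mu$. The sets $\mathbf{B}_{\pi_{\theta}\circ S_x}(\textbf{j},n)$ are the $(\pi_{\theta}\circ S_x)$-preimages of the balls $\mathbf{B}(\pi_{\theta}\circ S_x(\textbf{j}),R^{\phi}_{\gamma}|\gamma|^n)$, which are centered at the fixed point $\pi_{\theta}\circ S_x(\textbf{j})$ with radii tending to $0$; hence the Besicovitch differentiation theorem on $\mathbb{R}$ gives, for $\mu$-a.e.\ $z$, i.e.\ for $\nu^{\mathbb{Z}_+}$-a.e.\ $\textbf{j}$ with $z=\pi_{\theta}\circ S_x(\textbf{j})$,
$$\lim_{n\to\infty}\frac{\nu^{\mathbb{Z}_+}\big(\mathbf{B}_{\pi_{\theta}\circ S_x}(\textbf{j},n)\cap\mathcal{P}(\textbf{j})\big)}{\nu^{\mathbb{Z}_+}\big(\mathbf{B}_{\pi_{\theta}\circ S_x}(\textbf{j},n)\big)}=\frac{d\mu^{(j_1)}}{d\mu}\big(\pi_{\theta}\circ S_x(\textbf{j})\big).$$
Unwinding Rohlin's disintegration of $\hat{\omega}$ with respect to $\hat{\eta}\vee\mathcal{B}_{\Phi}$ (Theorem~\ref{Rohlin}) identifies the right-hand side with $\mathbf{E}_{\hat{\omega}}(\chi_{\tilde{\mathcal{P}}(\cdot)}|\hat{\eta}\vee\mathcal{B}_{\Phi})$, that is, with $b^{-\mathbf{I}_{\hat{\omega}}(\tilde{\mathcal{P}}|\hat{\eta}\vee\mathcal{B}_{\Phi})}$; taking $\log=\log_b$ and applying Fubini over $(x,\theta)$ gives (\ref{lem:Feng}) for $\hat{\omega}$-a.e.\ $(\textbf{j},x,\theta)$. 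In \cite{Feng} this differentiation step is repackaged dynamically via Maker's ergodic theorem fed by the recursion of Lemma~\ref{lem:projection}, which is why the statement is attributed there.

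Finally, $g\ge0$ because $\mathbf{B}_{\pi_{\theta}\circ S_x}(\textbf{j},n)\cap\mathcal{P}(\textbf{j})\subseteq\mathbf{B}_{\pi_{\theta}\circ S_x}(\textbf{j},n)$ forces every ratio to be at most $1$, and $g\in L^1(\hat{\omega})$ follows from a maximal inequality: the Besicovitch weak-$(1,1)$ inequality --- equivalently Doob's inequality for the martingale $\mathbf{E}_{\hat{\omega}}(\chi_{\tilde{\mathcal{P}}}|\hat{T}^{-n}(\hat{\eta}\vee\mathcal{B}_{\Phi}))$ --- yields $\hat{\omega}\big(\{g\ge\lambda\}\cap([k]\times[0,1)^2)\big)\le b^{-\lambda}$ for all $k$ and $\lambda\ge0$, whence $\int g\,d\hat{\omega}<\infty$; this is the ``furthermore'' part of \cite[Proposition~3.5]{Feng}. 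I expect the only genuine work to be the bookkeeping in the first paragraph --- confirming that the extra angle coordinate $\theta$ and the orbit-dependence of the affine maps $y\mapsto\gamma^n y+S(x,\textbf{w})$ and of $\pi_{\theta}$ do not obstruct the hypotheses of \cite[Proposition~3.5]{Feng}, and that the conditioning in its conclusion is on $\hat{\eta}\vee\mathcal{B}_{\Phi}$ rather than on $\mathcal{B}_{\Phi}$ alone. Because the contraction ratio is the constant $|\gamma|$, all of this is strictly easier than the general case and introduces no new difficulty.
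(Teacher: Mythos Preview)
Your proposal is correct and matches the paper's approach: both derive the lemma from \cite[Proposition~3.5]{Feng}, and your Besicovitch-differentiation and maximal-inequality sketches correctly unpack what that proposition encodes. One minor organizational correction: the recursion of Lemma~\ref{lem:projection} is not an input needed to invoke \cite[Proposition~3.5]{Feng} here --- the paper uses that recursion separately, in the proof of Lemma~\ref{lem:D.1}, where it is combined with the conclusion of Lemma~\ref{lem:Feng0} and Maker's theorem (Theorem~\ref{thm:Maker}) to compute the local dimension of $\pi_\theta m_x$.
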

We also need the following ergodic theorem due to Maker \cite{Maker}.
\begin{theorem}\label{thm:Maker}
	Let $(\Omega,\,\mathcal{M},\,\mu,\,G)$ be a measure-preserving system and let $\{g_n\}$ be integrable functions on $(\Omega,\,\mathcal{M},\,\mu)$. If
	$g_n(x)\to g(x)$ a.e. and if $sup_n|g_n(x)|=\overline{g}(x)$ is integrable, then for a.e. $x$,
	$$\lim_{n\to\infty}\frac1n\sum_{k=0}^{n-1}g_{n-k}\circ G^k(x)=g_{\infty}(x),$$
	where $g_{\infty}(x)=\lim_{n\to\infty}\frac1n\sum_{k=0}^{n-1}g\circ G^k(x).$
\end{theorem}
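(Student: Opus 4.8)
The plan is to deduce the statement from Birkhoff's pointwise ergodic theorem by showing that replacing $g_{n-k}$ with its limit $g$ inside the average costs nothing. Since $|g_n|\le\overline{g}$ for all $n$ and $g_n\to g$ a.e., the limit satisfies $|g|\le\overline{g}$, so $g\in L^1(\Omega,\mathcal{M},\mu)$; let $\mathcal{I}\subset\mathcal{M}$ be the $\sigma$-algebra of $G$-invariant sets, so that $g_\infty=\lim_n\frac1n\sum_{k=0}^{n-1}g\circ G^k=\mathbf{E}_{\mu}(g\mid\mathcal{I})$ exists a.e. by Birkhoff. Putting $h_n:=g_n-g$ we get $h_n\to 0$ a.e. and $|h_n|\le 2\overline{g}=:H\in L^1$, and since $\frac1n\sum_{k=0}^{n-1}g_{n-k}\circ G^k=A_n+\frac1n\sum_{k=0}^{n-1}g\circ G^k$ with $A_n(x):=\frac1n\sum_{k=0}^{n-1}h_{n-k}\circ G^k(x)$, it suffices to prove $A_n\to 0$ a.e. I would also introduce $H_N:=\sup_{j\ge N}|h_j|$, which decreases in $N$, is dominated by $H$, and tends to $0$ a.e., hence $\int H_N\,d\mu\to0$ by dominated convergence.

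Next I would record the auxiliary decay $\frac1n\,H\circ G^n\to 0$ a.e.: by measure-preservation, $\sum_{n\ge1}\mu(|H\circ G^n|\ge\varepsilon n)=\sum_{n\ge1}\mu(|H|\ge\varepsilon n)\le\varepsilon^{-1}\int|H|\,d\mu<\infty$ for every $\varepsilon>0$, so Borel--Cantelli gives $\limsup_n|H\circ G^n(x)|/n\le\varepsilon$ a.e., and letting $\varepsilon\downarrow 0$ proves the claim. In particular, for each fixed $j\ge1$ one has $\frac1n|h_{n-j}|\circ G^{n-j}(x)\le\frac1n H\circ G^{n-j}(x)\to 0$ a.e.

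Now fix $N\in\mathbb{Z}_+$ and, for $n>N$, split the sum defining $A_n$ at the index $k=n-N$. In the first block $0\le k\le n-N-1$ one has $n-k\ge N+1$, so $|h_{n-k}|\le H_{N+1}$ and
$$\frac1n\sum_{k=0}^{n-N-1}|h_{n-k}|\circ G^k(x)\ \le\ \frac1n\sum_{k=0}^{n-1}H_{N+1}\circ G^k(x)\ \longrightarrow\ \mathbf{E}_{\mu}(H_{N+1}\mid\mathcal{I})(x)\quad\text{a.e.}$$
by Birkhoff applied to $H_{N+1}\in L^1$. The second block $n-N\le k\le n-1$ contains only the $N$ terms $\frac1n|h_{n-j}|\circ G^{n-j}(x)$, $j=1,\dots,N$, each of which tends to $0$ a.e. by the previous paragraph, so its total tends to $0$ a.e. Hence for every fixed $N$, $\limsup_{n}|A_n(x)|\le\mathbf{E}_{\mu}(H_{N+1}\mid\mathcal{I})(x)$ for a.e. $x$. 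To conclude I would let $N\to\infty$: since $(H_N)_N$ decreases, so do the conditional expectations $\mathbf{E}_{\mu}(H_N\mid\mathcal{I})$, which therefore decrease a.e. to a nonnegative limit $L$ with $\int L\,d\mu=\lim_N\int H_N\,d\mu=0$, forcing $L=0$ a.e.; thus $\limsup_n|A_n(x)|=0$ a.e., i.e.\ $A_n\to0$ a.e., which is the desired convergence.

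The hard part is the boundary block $\sum_{k=n-N}^{n-1}$: there the map is iterated about $n$ times while $h_{n-k}$ still carries a small index and is not yet uniformly small, so no single pointwise bound handles both issues at once. Taming it hinges on the pointwise decay $\frac1n H\circ G^n\to0$, and the whole scheme is legitimate only because the two limits are taken in the order $n\to\infty$ first and $N\to\infty$ afterwards --- an interchange justified precisely by the monotonicity of the dominating sequence $(H_N)_N$.
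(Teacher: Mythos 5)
Your argument is correct. Note that the paper itself gives no proof of this statement: it is quoted as Maker's ergodic theorem and attributed to the reference [Maker, 1940], so there is no internal proof to compare against; your write-up is essentially the classical argument for this result. The reduction to $A_n=\frac1n\sum_{k=0}^{n-1}h_{n-k}\circ G^k\to0$ with $h_n=g_n-g$, the use of $H_N=\sup_{j\ge N}|h_j|\downarrow0$ with $\int H_N\,d\mu\to0$, the Borel--Cantelli (equivalently, Birkhoff-difference) fact $\frac1nH\circ G^n\to0$ a.e.\ to kill the boundary block of $N$ terms, and the final monotone passage $\mathbf{E}_{\mu}(H_{N+1}\mid\mathcal{I})\downarrow0$ a.e.\ are all sound, and you correctly take the limits in the order $n\to\infty$ then $N\to\infty$. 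Two trivial remarks: in the boundary block the terms are $\frac1n|h_j|\circ G^{n-j}$ for $j=1,\dots,N$ (you wrote $|h_{n-j}|\circ G^{n-j}$), but since your bound only uses $|h_m|\le H$ uniformly in $m$, this mislabeling is harmless; and the argument implicitly uses that $\mu$ is a probability (or at least finite) measure, which is the setting in which the theorem is applied in the paper.
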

For any $\textbf{j}\in\Sigma$ and $n\in\mathbb{Z}_+$, let $\textbf{j}_n=j_1j_2\ldots j_n\in\varLambda^n.$ 
\begin{lemma}\label{lem:D.1}
	For Lebesgue-$\text{a.e.}\,(x,\theta)\in [0,1)^2$ the following holds.
	 For $\pi_{\theta}m_x$-a.e. 
	$z\in\mathbb{R}$, we have
	$$\lim_{r\to 0}\frac{\log\bigg(\pi_{\theta}m_x\big(\mathbf{B}(z,\,r)\big)\bigg)}{\log r}=\frac{H_{\hat{\omega}}\big(\tilde{\mathcal{P}}|\hat{\eta}\vee\mathcal{B}_{\Phi}\big)-\log b}{\log|\gamma|}.$$
\end{lemma}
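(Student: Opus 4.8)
The plan is to show that for fixed $(x,\theta)$ and $\pi_{\theta}m_x$-a.e.\ $z=\pi_{\theta}\circ S_x(\textbf{j})$, the local dimension of $\pi_{\theta}m_x$ at $z$ equals $\frac{1}{\log|\gamma|}\lim_{n}\frac1n\log\nu^{\mathbb{Z}_+}\big(\mathbf{B}_{\pi_{\theta}\circ S_x}(\textbf{j},n)\big)$, and then to evaluate this limit by a telescoping argument driven by Lemmas \ref{lem:projection} and \ref{lem:Feng0} together with Maker's ergodic theorem (Theorem \ref{thm:Maker}). Write $a_n(\textbf{j},x,\theta):=\nu^{\mathbb{Z}_+}\big(\mathbf{B}_{\pi_{\theta}\circ S_x}(\textbf{j},n)\big)$. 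By (\ref{def:ball}) this equals $\pi_{\theta}m_x\big(\mathbf{B}(z,R^{\phi}_{\gamma}|\gamma|^{n})\big)$, it depends (for fixed $(x,\theta)$) on $\textbf{j}$ only through $z$, and $a_0\equiv1$ because $\mathbf{B}(z,R^{\phi}_{\gamma})$ contains the whole support of $\pi_{\theta}m_x$. Since consecutive radii $R^{\phi}_{\gamma}|\gamma|^{n}$ differ by the fixed factor $|\gamma|$, a routine monotonicity squeeze of $\mathbf{B}(z,r)$ between two consecutive such radii (using $\log(R^{\phi}_{\gamma}|\gamma|^{n})=n\log|\gamma|+O(1)$, with $\log=\log_b$) shows that $\lim_{r\to0}\frac{\log\pi_{\theta}m_x(\mathbf{B}(z,r))}{\log r}$ exists and equals $\frac{1}{\log|\gamma|}\lim_{n}\frac1n\log a_n$, provided the latter exists.

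I would then run the telescoping. Put $c_n:=\nu^{\mathbb{Z}_+}\big(\mathbf{B}_{\pi_{\theta}\circ S_x}(\textbf{j},n)\cap\mathcal{P}(\textbf{j})\big)$ and $G(\textbf{j},x,\theta):=\big(\tau\textbf{j},\,(x+j_1)/b,\,\theta-\Delta\bmod1\big)$. Lemma \ref{lem:projection} identifies $\mathbf{B}_{\pi_{\theta}\circ S_x}(\textbf{j},n+1)\cap\mathcal{P}(\textbf{j})$ with $\tau^{-1}\big(\mathbf{B}_{\pi_{\theta-\Delta}\circ S_{(x+j_1)/b}}(\tau\textbf{j},n)\big)\cap\mathcal{P}(\textbf{j})$; since $\nu^{\mathbb{Z}_+}$ is a product measure, $\nu^{\mathbb{Z}_+}\big(\tau^{-1}(A)\cap[j_1]\big)=\frac1b\nu^{\mathbb{Z}_+}(A)$, hence
$$c_{n+1}(\textbf{j},x,\theta)=\tfrac1b\,a_n\big(G(\textbf{j},x,\theta)\big).$$
One checks that $G$ preserves $\hat{\omega}=\nu^{\mathbb{Z}_+}\times\mathcal{m}\times\mathcal{m}$ and is ergodic: it is conjugate to $\hat{T}^{-1}$ via the $\hat{\omega}$-preserving involution $(\textbf{j},x,\theta)\mapsto(\textbf{j},x,-\theta\bmod1)$, so Lemma \ref{lem:ergodic} applies. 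Setting $d_n:=\log a_n-\log c_n\ge0$, Lemma \ref{lem:Feng0} supplies, $\hat{\omega}$-a.e., the pointwise limit $d_n\to\mathbf{I}:=\mathbf{I}_{\hat{\omega}}\big(\tilde{\mathcal{P}}\mid\hat{\eta}\vee\mathcal{B}_{\Phi}\big)$ together with the integrable domination $\sup_n d_n=g\in L^1(\hat{\omega})$. From the one-step identity $\log a_n=\log c_n+d_n=-\log b+\log a_{n-1}\circ G+d_n$, iterating down to $a_0\equiv1$ yields
$$\frac1n\log a_n=-\log b+\frac1n\sum_{k=0}^{n-1}d_{n-k}\circ G^{k}.$$

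Finally I would apply Maker's theorem (Theorem \ref{thm:Maker}) to the ergodic system $(\Sigma\times[0,1)^2,\mathcal{B},\hat{\omega},G)$ with $g_n=d_n$ and $\overline{g}=g$: the average above converges $\hat{\omega}$-a.e.\ to $\lim_n\frac1n\sum_{k=0}^{n-1}\mathbf{I}\circ G^{k}$, which by ergodicity of $G$ equals the constant $\int\mathbf{I}\,d\hat{\omega}=H_{\hat{\omega}}\big(\tilde{\mathcal{P}}\mid\hat{\eta}\vee\mathcal{B}_{\Phi}\big)$. Thus $\frac1n\log a_n\to H_{\hat{\omega}}\big(\tilde{\mathcal{P}}\mid\hat{\eta}\vee\mathcal{B}_{\Phi}\big)-\log b$ for $\hat{\omega}$-a.e.\ $(\textbf{j},x,\theta)$; dividing by $\log|\gamma|$ and using Fubini to pass from ``$\hat{\omega}$-a.e.\ $(\textbf{j},x,\theta)$'' to ``Lebesgue-a.e.\ $(x,\theta)$, then $\pi_{\theta}m_x$-a.e.\ $z$'' (legitimate because, for fixed $(x,\theta)$, $a_n$ is a function of $z$ alone) gives the claimed formula. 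The main obstacle is the middle step: one must verify that the per-scale corrections produced by Lemma \ref{lem:Feng0} really do telescope along the orbits of the \emph{correct} $\hat{\omega}$-preserving ergodic transformation — namely $G$ — so that Maker's theorem is applicable and its limit is exactly the conditional entropy $H_{\hat{\omega}}\big(\tilde{\mathcal{P}}\mid\hat{\eta}\vee\mathcal{B}_{\Phi}\big)$; the squeezing over all radii and the Fubini disintegration are then routine.
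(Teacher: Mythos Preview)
Your argument is correct and follows the same scheme as the paper's proof: telescope $\nu^{\mathbb{Z}_+}\big(\mathbf{B}_{\pi_\theta\circ S_x}(\textbf{j},n)\big)$ via Lemma~\ref{lem:projection}, invoke Lemma~\ref{lem:Feng0} for the pointwise limit and the $L^1$ domination, and then apply Maker's theorem (Theorem~\ref{thm:Maker}) to an ergodic measure-preserving map to identify the limit as the conditional entropy. In fact you are more precise than the paper on one point: the telescoping produces the orbit $(\tau^k\textbf{j},\,\textbf{j}_k(x),\,\theta-k\Delta)=G^k(\textbf{j},x,\theta)$, and you correctly run Maker's theorem along $G$ (checking its ergodicity via the conjugacy $G=\Psi\circ\hat{T}^{-1}\circ\Psi$ with $\Psi(\textbf{j},x,\theta)=(\textbf{j},x,-\theta)$), whereas the paper's displayed formula writes $\hat{T}^k$ in the same place --- a harmless slip, since the ergodic averages converge to the same constant either way.
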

\begin{proof}
	For any $(\textbf{j},\,x,\,\theta)\in\Sigma\times[0,1)^2 $ and $n\in\mathbb{Z}_+$,
	we have
	\begin{equation}
	\begin{aligned}
		\nu^{\mathbb{Z}_+}\big(\mathbf{B}_{\pi_{\theta}\circ S_x}(\textbf{j},\,n)\big)&=	\prod_{k=0}^{n-1}\frac{\nu^{\mathbb{Z}_+}\big(\mathbf{B}_{\pi_{\theta-k\Delta}\circ S_{\textbf{j}_k(x)}}(\tau^k(\textbf{j}),\,n-k)\big)}{\nu^{\mathbb{Z}_+}\big(\mathbf{B}_{\pi_{\theta-(k+1)\Delta}\circ S_{\textbf{j}_{k+1}(x)}}(\tau^{k+1}(\textbf{j}),\,n-k-1)\big)}
		\\&=\prod_{k=0}^{n-1}\frac{\nu^{\mathbb{Z}_+}\big(\mathbf{B}_{\pi_{\theta-k\Delta}\circ S_{\textbf{j}_k(x)}}(\tau^k(\textbf{j}),\,n-k)\big)}{b\nu^{\mathbb{Z}_+}\big(\,\mathbf{B}_{\pi_{\theta-k\Delta}\circ S_{\textbf{j}_k(x)}}(\tau^k(\textbf{j}),\,n-k)\cap\mathcal{P}\big(\tau^k(\textbf{j})\big)\,\big)}
	\end{aligned}
	\end{equation}
	by the definition of $R^{\phi}_{\gamma}$ and (\ref{lem:symbal})
	where $\textbf{j}_0(x)=x$.  For any $n\in\mathbb{N}$, let function $g_n:\Sigma\times[0,1)^2\to\mathbb{Z}_+$ be such that
	$$g_n(\textbf{j},x,\theta)=-\log \frac{\nu^{\mathbb{Z}_+}\big(\mathbf{B}_{\pi_{\theta}\circ S_x}(\textbf{j},\,n)\cap\mathcal{P}(\textbf{j})\big)}{\nu^{\mathbb{Z}_+}\big(\mathbf{B}_{\pi_{\theta}\circ S_x}(\textbf{j},\,n)\big)}$$
	Thus we have
	$$\log\bigg(\nu^{\mathbb{Z}_+}\big(\,\mathbf{B}_{\pi_{\theta}\circ S_x}(\textbf{j},\,n)\,\big)\bigg)=n\log1/ b+\sum_{k=0}^{n-1}g_{n-k}\circ \hat{T}^k(\textbf{j},x,\theta),$$
	which implies: for $\hat{\omega}$-a.e. $(\textbf{j},\,x,\,\theta)\in\Sigma\times[0,1)^2 $ we have
     $$\lim_{n\to \infty}\frac{\log\bigg(\nu^{\mathbb{Z}_+}\big(\mathbf{B}_{\pi_{\theta}\circ S_x}(\textbf{j},\,n)\big)\bigg)}{n\log |\gamma|}=\frac{H_{\hat{\omega}}\big(\hat{\mathcal{P}}|\hat{\eta}\vee\mathcal{B}_{\Phi}\big)-\log b}{\log|\gamma|}$$
     by Lemma \ref{lem:Feng0} and Theorem \ref{thm:Maker}. Combining this with  $$\nu^{\mathbb{Z}_+}\big(\mathbf{B}_{\pi_{\theta}\circ S_x}(\textbf{j},\,n)\big)=\pi_{\theta}m_x\bigg(\,\mathbf{B}\big(\,\pi_{\theta}\circ S_x(\textbf{j}),\,R^{\phi}_{\gamma}|\gamma|^n\,\big)\,\bigg),$$
      thus Lemma \ref{lem:D.1} holds.
\end{proof}
\subsection{Proof of Theorem \ref{thm:DC}} This subsection is devoted to give a sketchy proof for the rest of Theorem \ref{thm:DC}, since the details are similar to Sect. \ref{sec:d1} and \cite[Sect. 3.1, Sect. 3.3]{Falconer}.  Define function $\hat{S}:\Sigma\times[0,1)^2\to\mathbb{C}$  such that
$$(\textbf{j},\,x,\,\theta)\mapsto S(x,\,\textbf{j})$$
and $\mathcal{B}_{\hat{S}}:=\hat{S}^{-1}\big(\mathcal{B}_{\mathbb{C}}\big)$. 
For any  $x,\theta\in[0,1)$, define the  partition of $\Sigma$ be such that $$\tilde{\eta}_{x,\,\theta}:=\big\{\,(\pi_{\theta}\circ S_x)^{-1}(\,\{\,z\,\}\,)\,:\,z\in\mathbb{R}\,\big\}.$$
By \cite[Proposition 3.5]{Feng} we have the following result.
\begin{lemma}
	For $\hat{\omega}$-a.e. $(\textbf{j},x,\theta)\in\Sigma\times[0,1)^2 $ we have
	\begin{equation}\label{lem:Feng2}
	\lim_{n\to\infty}\log\frac{\nu^{\mathbb{Z}_+}_{(\textbf{j},\,x,\,\theta)}\big(\mathbf{B}_{ S_x}(\textbf{j},\,n)\cap\mathcal{P}(\textbf{j})\big)}{\nu^{\mathbb{Z}_+}_{(\textbf{j},\,x,\,\theta)}\big(\mathbf{B}_{ S_x}(\textbf{j},\,n)\big)}=-\mathbf{I}_{\hat{\omega}}\big(\tilde{\mathcal{P}}|\hat{\eta}\vee\mathcal{B}_{\hat{S}}\big)(\textbf{j},x,\theta)
	\end{equation}
	where $\nu^{\mathbb{Z}_+}_{(\textbf{j},\,x,\,\theta)}=(\nu^{\mathbb{Z}_+})^{\tilde{\eta}_{x,\,\theta}}_{\textbf{j}}$ and $\big\{\,(\nu^{\mathbb{Z}_+})^{\tilde{\eta}_{x,\,\theta}}_{\textbf{j}}:\,\textbf{j}\in\Sigma\,\big\}$ is the canonical system of conditional measure with $\tilde{\eta}_{x,\,\theta}$.
	Furthermore, set 
	$$h(\textbf{j},x,\theta)=\sup_{n\in\mathbb{N}}-\log \frac{\nu^{\mathbb{Z}_+}_{(\textbf{j},\,x,\,\theta)}\big(\mathbf{B}_{ S_x}(\textbf{j},\,n)\cap\mathcal{P}(\textbf{j})\big)}{\nu^{\mathbb{Z}_+}_{(\textbf{j},\,x,\,\theta)}\big(\mathbf{B}_{ S_x}(\textbf{j},\,n)\big)}.$$
	Then $h\ge0$ and $h\in L^1(\Sigma\times[0,1)^2,\,\mathcal{B},\,\hat{\omega}).$
\end{lemma}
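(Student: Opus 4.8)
The plan is to obtain this lemma from \cite[Proposition 3.5]{Feng} by the same argument that produced Lemma \ref{lem:Feng0}, the only changes being that the $\mathbb{R}$-valued map $\Phi(\textbf{j},x,\theta)=\pi_{\theta}\circ S_x(\textbf{j})$ is replaced by the $\mathbb{C}$-valued map $\hat{S}(\textbf{j},x,\theta)=S(x,\textbf{j})$, that $\mathcal{B}_{\Phi}$ is replaced by $\mathcal{B}_{\hat{S}}$, and that the reference measure $\nu^{\mathbb{Z}_+}$ is replaced by its Rohlin disintegration $\nu^{\mathbb{Z}_+}_{(\textbf{j},x,\theta)}=(\nu^{\mathbb{Z}_+})^{\tilde{\eta}_{x,\theta}}_{\textbf{j}}$ onto the level sets of $\pi_{\theta}\circ S_x$. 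First I would record the structural facts needed to feed the proposition: for each fixed $(x,\theta)$ the sets $\mathbf{B}_{S_x}(\textbf{j},n)$ decrease in $n$, their intersection over $n$ equals $S_x^{-1}(\{S_x(\textbf{j})\})$, and the family $\{\,\mathbf{B}_{S_x}(\textbf{j},n):\textbf{j}\in\Sigma,\ n\in\mathbb{N}\,\}$ generates $\mathcal{B}_{\hat{S}}$ along the fibre $\Sigma\times\{x\}\times\{\theta\}$, because Euclidean balls of radius $R^{\phi}_{\gamma}|\gamma|^n$ generate the Borel sets of $\mathbb{C}$. Since $\nu^{\mathbb{Z}_+}_{(\textbf{j},x,\theta)}$ is carried by a single level set $(\pi_{\theta}\circ S_x)^{-1}(\{z\})$, its $S_x$-image lives on the affine line $\{\,w\in\mathbb{C}:\pi_{\theta}(w)=z\,\}$, and $\mathbf{B}_{S_x}(\textbf{j},n)$ pulls back the interval obtained by intersecting that line with a Euclidean ball, so the only covering input is the one-dimensional Lebesgue differentiation theorem on a line.

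Next I would apply \cite[Proposition 3.5]{Feng} to the Lebesgue space $(\Sigma\times[0,1)^2,\mathcal{B},\hat{\omega})$ with the countable partition $\tilde{\mathcal{P}}$ and the decreasing family $\{\mathbf{B}_{S_x}(\textbf{j},n)\}_n$. Writing $\mathbf{E}_{\hat{\omega}}(\chi_{[k]\times[0,1)^2}\mid\hat{\eta}\vee\mathcal{B}_{\hat{S}})$ as a function of $(S_x(\textbf{j}),x,\theta)$ --- legitimate since $\mathcal{B}_{\hat{S}}=\hat{S}^{-1}(\mathcal{B}_{\mathbb{C}})$ --- the differentiation theorem gives, for $\hat{\omega}$-a.e.\ $(\textbf{j},x,\theta)$ and each $k\in\varLambda$, that $\nu^{\mathbb{Z}_+}_{(\textbf{j},x,\theta)}(\mathbf{B}_{S_x}(\textbf{j},n)\cap[k])/\nu^{\mathbb{Z}_+}_{(\textbf{j},x,\theta)}(\mathbf{B}_{S_x}(\textbf{j},n))$ converges as $n\to\infty$ to $\mathbf{E}_{\hat{\omega}}(\chi_{[k]\times[0,1)^2}\mid\hat{\eta}\vee\mathcal{B}_{\hat{S}})(\textbf{j},x,\theta)$. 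Here the consistency of Rohlin disintegrations --- disintegrating $\nu^{\mathbb{Z}_+}$ first along the partition of $\pi_{\theta}\circ S_x$-level sets and then along the finer partition of $S_x$-level sets agrees with disintegrating it directly along the $S_x$-level sets --- is what lets one replace conditioning on $\hat{\eta}\vee\mathcal{B}_{\Phi}$ (which is what passing to $\nu^{\mathbb{Z}_+}_{(\textbf{j},x,\theta)}$ encodes) followed by refinement to $\mathcal{B}_{\hat{S}}$ by a single conditional expectation with respect to $\hat{\eta}\vee\mathcal{B}_{\hat{S}}$. Taking $k=j_1$ and then $-\log$ turns the left-hand side into that of \eqref{lem:Feng2}, while $-\log\mathbf{E}_{\hat{\omega}}(\chi_{[j_1]\times[0,1)^2}\mid\hat{\eta}\vee\mathcal{B}_{\hat{S}})(\textbf{j},x,\theta)$ is by definition $\mathbf{I}_{\hat{\omega}}(\tilde{\mathcal{P}}\mid\hat{\eta}\vee\mathcal{B}_{\hat{S}})(\textbf{j},x,\theta)$, which is the asserted identity.

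For the ``furthermore'' clause, every ratio lies in $(0,1]$, so $h\ge0$; to see $h\in L^1(\Sigma\times[0,1)^2,\mathcal{B},\hat{\omega})$ I would invoke the weak-$(1,1)$ maximal inequality underlying the Besicovitch differentiation step in \cite[Proposition 3.5]{Feng}, which controls the distribution of the supremum over $n$ of the ratios and hence integrates (after applying $-\log$ and summing the finitely many $k\in\varLambda$) to a finite quantity; the joint measurability in $(\textbf{j},x,\theta)$ that makes the $\hat{\omega}$-a.e.\ statement meaningful follows from Rohlin's theorem (Theorem \ref{Rohlin}) applied to the measurable partition refining $\hat{\eta}$ by the level sets of $\Phi$. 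The one genuinely non-formal point --- and the reason \cite[Proposition 3.5]{Feng} is used rather than the Doob martingale theorem --- is that the sets $\mathbf{B}_{S_x}(\textbf{j},n)$ are preimages of overlapping Euclidean balls, not atoms of a $\sigma$-algebra, so the convergence above is a genuine differentiation statement resting on the covering structure of $\mathbb{C}$ (here, of a line); everything else duplicates the bookkeeping of Section \ref{sec:d1}.
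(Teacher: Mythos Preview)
Your proposal is correct and follows the same approach as the paper, which simply records the lemma as a consequence of \cite[Proposition 3.5]{Feng} without further elaboration. Your explanation of how the proposition applies---via the tower property identifying $(\hat{\eta}\vee\mathcal{B}_{\Phi})\vee\mathcal{B}_{\hat{S}}=\hat{\eta}\vee\mathcal{B}_{\hat{S}}$ and the one-dimensional differentiation along the line $\pi_{\theta}^{-1}(\{z\})$ carrying $S_x\big(\nu^{\mathbb{Z}_+}_{(\textbf{j},x,\theta)}\big)$---supplies the details the paper leaves implicit.
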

\begin{lemma}\label{lem:projection2}
	For any $x\in[0,1)  $ and $\textbf{j}\in\Sigma$, $n\in\mathbb{N}$, we have
	\begin{equation}\label{lem:symbal2}
	\mathbf{B}_{ S_x}(\textbf{j},\,n+1)\cap\mathcal{P}(\textbf{j})=\tau^{-1}\bigg(\mathbf{B}_{ S_{(x+j_1)/b}}(\tau(\textbf{j}),\,n)\bigg)\cap\mathcal{P}(\textbf{j}).
	\end{equation}
\end{lemma}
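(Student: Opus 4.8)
The plan is to imitate the proof of Lemma \ref{lem:projection}, which is in fact simpler here since no projection is involved: one works directly with the Euclidean modulus on $\mathbb{C}$ together with the scaling relation (\ref{FM}).

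First I would spell out both sides of (\ref{lem:symbal2}) using the definition (\ref{def:ball}). The left-hand side consists of those $\textbf{i}\in\Sigma$ with $i_1=j_1$ and $|S(x,\textbf{i})-S(x,\textbf{j})|\le R^{\phi}_{\gamma}|\gamma|^{n+1}$. For any such $\textbf{i}$, since it shares the common prefix $j_1$ with $\textbf{j}$, applying (\ref{FM}) with the length-one word $\textbf{w}=j_1$ (so $m=1$ and $\textbf{w}(x)=(x+j_1)/b$ by (\ref{eq:symbolfunction})) gives
$$S(x,\textbf{i})-S(x,\textbf{j})=\gamma\Big(S\big(\tfrac{x+j_1}{b},\tau(\textbf{i})\big)-S\big(\tfrac{x+j_1}{b},\tau(\textbf{j})\big)\Big).$$
Taking moduli and dividing by $|\gamma|$ shows that the inequality $|S(x,\textbf{i})-S(x,\textbf{j})|\le R^{\phi}_{\gamma}|\gamma|^{n+1}$ is equivalent to $|S_{(x+j_1)/b}(\tau(\textbf{i}))-S_{(x+j_1)/b}(\tau(\textbf{j}))|\le R^{\phi}_{\gamma}|\gamma|^{n}$, i.e. to $\tau(\textbf{i})\in\mathbf{B}_{S_{(x+j_1)/b}}(\tau(\textbf{j}),n)$ by (\ref{def:ball}). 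Since each implication used is an equivalence, the two sides of (\ref{lem:symbal2}) coincide, which proves both inclusions at once.

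There is no real obstacle in this lemma; the one point to keep in mind is that intersecting with the cylinder $\mathcal{P}(\textbf{j})$ is precisely what makes (\ref{FM}) applicable (it forces $i_1=j_1$, so that $\textbf{i}$ and $\textbf{j}$ genuinely share the prefix $j_1$), and that $\tau^{-1}$ on the right-hand side is taken inside $\Sigma$ before the final intersection with $\mathcal{P}(\textbf{j})$, so that the cylinder structure matches on both sides.
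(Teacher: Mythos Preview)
Your proposal is correct and follows exactly the approach the paper indicates: the paper's own proof simply says ``The proof is similar to Lemma \ref{lem:projection},'' and you have carried out precisely that imitation, replacing the projected quantity $\pi_\theta\circ S_x$ by $S_x$ itself and using the modulus identity from (\ref{FM}) to obtain a clean equivalence rather than two separate inclusions.
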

\begin{proof}
	The proof is  similar to Lemma \ref{lem:projection}.
\end{proof}

\begin{lemma}\label{lem: density}
	For every $x,\theta\in[0,1)$ and $A\in\mathcal{B}_{\Sigma}$, for $\nu^{\mathbb{Z}_+}$-a.e. $\textbf{j}\in\Sigma,$
	$$(\nu^{\mathbb{Z}_+})^{\tilde{\eta}_{x,\,\theta}}_{\textbf{j}}(A)=\lim_{n\to\infty}\frac{\nu^{\mathbb{Z}_+}\bigg( A\cap\mathbf{B}_{\pi_{\theta}\circ S_x}(\textbf{j},\,n)\bigg)}{\nu^{\mathbb{Z}_+}\bigg(\mathbf{B}_{\pi_{\theta}\circ S_x}(\textbf{j},\,n)\bigg)}$$
\end{lemma}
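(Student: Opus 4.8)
The plan is to recognize the right-hand side as a differentiation quotient of two finite Borel measures on $\mathbb{R}$ and to identify its limit with the Rohlin conditional measure. Fix $x,\theta\in[0,1)$ and abbreviate $\mu:=\nu^{\mathbb{Z}_+}$ and $g:=\pi_{\theta}\circ S_x:\Sigma\to\mathbb{R}$; note $g$ is continuous, since $\phi$ Lipschitz makes $S_x$ continuous and $\pi_{\theta}$ is linear. First I would check that $\tilde{\eta}_{x,\theta}=\{g^{-1}(\{z\}):z\in\mathbb{R}\}$ is a measurable partition of the Lebesgue space $(\Sigma,\mathcal{B}_{\Sigma},\mu)$ whose generated sub-$\sigma$-algebra $\hat{\tilde{\eta}}_{x,\theta}$ coincides, modulo $\mu$-null sets, with $g^{-1}(\mathcal{B}_{\mathbb{R}})$ (standard, since $\mathbb{R}$ is standard Borel and $\mathcal{B}_{\mathbb{R}}$ is countably generated). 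By Theorem \ref{Rohlin} this gives $(\nu^{\mathbb{Z}_+})^{\tilde{\eta}_{x,\theta}}_{\textbf{j}}(A)=\mathbf{E}_{\mu}\big(\chi_A\,\big|\,g^{-1}(\mathcal{B}_{\mathbb{R}})\big)(\textbf{j})$ for $\mu$-a.e.\ $\textbf{j}$.

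Next I would push forward: set $\rho:=g_*\mu$ and $\lambda:=g_*(\chi_A\mu)$, i.e.\ $\rho(B)=\mu(g^{-1}B)$ and $\lambda(B)=\mu(A\cap g^{-1}B)$ for $B\in\mathcal{B}_{\mathbb{R}}$. These are finite Borel measures on $\mathbb{R}$ with $\lambda\le\rho$, hence $\lambda\ll\rho$; write $h_A:=d\lambda/d\rho$. A one-line change of variables, $\int_{g^{-1}(B)}\chi_A\,d\mu=\lambda(B)=\int_B h_A\,d\rho=\int_{g^{-1}(B)}(h_A\circ g)\,d\mu$ for all $B$, shows $h_A\circ g=\mathbf{E}_{\mu}\big(\chi_A\,\big|\,g^{-1}(\mathcal{B}_{\mathbb{R}})\big)$ $\mu$-a.e., so $(\nu^{\mathbb{Z}_+})^{\tilde{\eta}_{x,\theta}}_{\textbf{j}}(A)=h_A(g(\textbf{j}))$ for $\mu$-a.e.\ $\textbf{j}$. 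Then I would invoke the differentiation theorem for Radon measures on $\mathbb{R}$ (via the Besicovitch covering theorem): for $\rho$-a.e.\ $t$, $h_A(t)=\lim_{r\to0}\lambda\big(\mathbf{B}(t,r)\big)\big/\rho\big(\mathbf{B}(t,r)\big)$, the denominator being positive for every $r>0$ because $\rho$-a.e.\ $t$ lies in $\text{supp}(\rho)$. Since the exceptional $\rho$-null set has $\mu$-null $g$-preimage, this limit identity transfers to $t=g(\textbf{j})$ for $\mu$-a.e.\ $\textbf{j}$; restricting the radii to $r=R^{\phi}_{\gamma}|\gamma|^n\to0$ and using \eqref{def:ball}, namely $\lambda(\mathbf{B}(g(\textbf{j}),r))=\mu\big(A\cap\mathbf{B}_{\pi_{\theta}\circ S_x}(\textbf{j},n)\big)$ and $\rho(\mathbf{B}(g(\textbf{j}),r))=\mu\big(\mathbf{B}_{\pi_{\theta}\circ S_x}(\textbf{j},n)\big)$, yields the claimed formula.

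The argument is essentially routine, so I do not expect a genuine obstacle; the two points needing mild care are the identification $\hat{\tilde{\eta}}_{x,\theta}=g^{-1}(\mathcal{B}_{\mathbb{R}})$ modulo $\mu$-null sets, needed for Rohlin's disintegration to apply to $\tilde{\eta}_{x,\theta}$, and the positivity of $\mu\big(\mathbf{B}_{\pi_{\theta}\circ S_x}(\textbf{j},n)\big)$ for $\mu$-a.e.\ $\textbf{j}$ and every $n$, which is handled by the support remark above. This lemma is precisely the density statement underlying the Feng--Hu type computation of $\text{dim}\big((m_x)^{\eta_{\theta}}_z\big)$ entering the remaining part of the proof of Theorem \ref{thm:DC}, mirroring \eqref{lem:Feng} and \eqref{lem:Feng2}.
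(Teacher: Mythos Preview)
Your proposal is correct and follows essentially the same route as the paper, which simply defers to \cite[Lemma~3.1]{Falconer}: push forward by $g=\pi_\theta\circ S_x$, identify the Rohlin conditional measure with the Radon--Nikodym derivative $d(g_*(\chi_A\mu))/d(g_*\mu)$, and apply the Besicovitch differentiation theorem on $\mathbb{R}$ along the radii $R^{\phi}_{\gamma}|\gamma|^n$. The two points you flag (the identification $\hat{\tilde{\eta}}_{x,\theta}=g^{-1}(\mathcal{B}_{\mathbb{R}})$ up to null sets and positivity of the denominator via the support of $g_*\mu$) are exactly the routine checks needed, and they go through as you indicate.
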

\begin{proof}
	The proof is similar to \cite[Lemma 3.1]{Falconer}.
\end{proof}

\begin{lemma}\label{lem:D.2}
	For Lebesgue-$\text{a.e.}\,(x,\theta)\in [0,1)^2$, for $m_x$-a.e. $z\in\mathbb{C}$, 
	$$\lim_{r\to 0}\frac{\log\bigg((m_x)^{\eta_{\theta}}_{z}(\mathbf{B}(z,\,r))\bigg)}{\log r}=\frac{H_{\hat{\omega}}\big(\tilde{\mathcal{P}}|\hat{\eta}\vee\mathcal{B}_{\hat{S}}\big)-H_{\hat{\omega}}\big(\tilde{\mathcal{P}}|\hat{\eta}\vee\mathcal{B}_{\Phi}\big)}{\log|\gamma|}.$$
\end{lemma}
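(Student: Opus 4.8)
The plan is to run the Feng--Hu/Falconer machinery of Section \ref{sec:d1} one level down, on the fibered conditional measures rather than on $\pi_\theta m_x$. First I pass from $\mathbb{C}$ to symbolic space: since $m_x=S_x(\nu^{\mathbb{Z}_+})$ and the partition $\tilde{\eta}_{x,\,\theta}$ of $\Sigma$ into level sets of $\pi_\theta\circ S_x$ is exactly $S_x^{-1}(\eta_\theta)$, the functoriality of Rohlin disintegration together with the uniqueness clause of Theorem \ref{Rohlin} gives, for Lebesgue-a.e.\ $(x,\theta)$ and $\nu^{\mathbb{Z}_+}$-a.e.\ $\textbf{j}$, the identity $(m_x)^{\eta_{\theta}}_{S_x(\textbf{j})}=S_x\big(\nu^{\mathbb{Z}_+}_{(\textbf{j},\,x,\,\theta)}\big)$; the fact that $S_x$ may collapse part of a fiber is harmless, as this identity is checked directly against the two characterizing properties in Theorem \ref{Rohlin}. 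Hence for $m_x$-a.e.\ $z$, writing $z=S_x(\textbf{j})$, the local dimension of $(m_x)^{\eta_{\theta}}_z$ at $z$ equals
$$\lim_{n\to\infty}\frac{\log\nu^{\mathbb{Z}_+}_{(\textbf{j},\,x,\,\theta)}\big(\mathbf{B}_{S_x}(\textbf{j},n)\big)}{n\log|\gamma|},$$
with $\mathbf{B}_{S_x}(\textbf{j},n)$ as in (\ref{def:ball}); passing from the geometric radii $R^{\phi}_{\gamma}|\gamma|^n$ to all $r\to0$ is routine because $|\gamma|^{n+1}/|\gamma|^n$ is bounded away from $0$ and $1$, and passing from ``$\nu^{\mathbb{Z}_+}$-a.e.\ $\textbf{j}$'' to ``$(m_x)^{\eta_{\theta}}_z$-a.e.\ $z$'' is the usual disintegration argument.

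Next I telescope $\nu^{\mathbb{Z}_+}_{(\textbf{j},\,x,\,\theta)}(\mathbf{B}_{S_x}(\textbf{j},n))$ along the orbit, exactly as in the proof of Lemma \ref{lem:D.1}, using Lemma \ref{lem:projection2}. The single new feature is that $\nu^{\mathbb{Z}_+}_{(\textbf{j},\,x,\,\theta)}(\mathcal{P}(\textbf{j}))$ is no longer $1/b$ but $b^{-\mathbf{I}_{\hat{\omega}}(\tilde{\mathcal{P}}\,|\,\hat{\eta}\vee\mathcal{B}_{\Phi})(\textbf{j},x,\theta)}$, because conditioning on the fiber of $\pi_\theta\circ S_x$ pins down the value of $\Phi$; carrying this through requires the compatibility of the disintegrations $\{\nu^{\mathbb{Z}_+}_{(\textbf{j},\,x,\,\theta)}\}$ with the dynamics, i.e.\ that the normalized restriction of $\nu^{\mathbb{Z}_+}_{(\textbf{j},\,x,\,\theta)}$ to $\mathcal{P}(\textbf{j})$ is pushed by $\tau$ onto the conditional measure at the next point of the orbit --- which follows, as in \cite{Feng,Falconer}, from the observation that $\tilde{\eta}_{x,\,\theta}$ restricted to $[j_1]$ is the $\tau$-preimage of $\tilde{\eta}_{(x+j_1)/b,\,\theta-\Delta}$, itself a consequence of (\ref{FM}) and (\ref{eq:projectionfunction}). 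This yields
$$\log\nu^{\mathbb{Z}_+}_{(\textbf{j},\,x,\,\theta)}\big(\mathbf{B}_{S_x}(\textbf{j},n)\big)=\sum_{k=0}^{n-1}\Big(h_{n-k}-\mathbf{I}_{\hat{\omega}}\big(\tilde{\mathcal{P}}\,|\,\hat{\eta}\vee\mathcal{B}_{\Phi}\big)\Big)\circ\hat{T}^{k}(\textbf{j},x,\theta),$$
where $h_m$ is the ratio (at level $m$, rather than its supremum) appearing in the unlabeled lemma preceding Lemma \ref{lem:projection2}, i.e.\ the $S_x$-analogue of the $g_m$ from the proof of Lemma \ref{lem:D.1}, and the boundary term vanishes since $\mathbf{B}_{S_x}(\textbf{j},0)=\Sigma$. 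Dividing by $n$, I apply Birkhoff's theorem to the summand $\mathbf{I}_{\hat{\omega}}(\tilde{\mathcal{P}}\,|\,\hat{\eta}\vee\mathcal{B}_{\Phi})$ (integrable, with integral $H_{\hat{\omega}}(\tilde{\mathcal{P}}\,|\,\hat{\eta}\vee\mathcal{B}_{\Phi})$) and Maker's theorem (Theorem \ref{thm:Maker}) to the triangular array $\{h_{n-k}\circ\hat{T}^k\}$, whose pointwise limit is $\mathbf{I}_{\hat{\omega}}(\tilde{\mathcal{P}}\,|\,\hat{\eta}\vee\mathcal{B}_{\hat{S}})$ by (\ref{lem:Feng2}) and which is dominated by the integrable function $h$; ergodicity of $\hat{T}$ is Lemma \ref{lem:ergodic}. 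The limit is therefore $H_{\hat{\omega}}(\tilde{\mathcal{P}}\,|\,\hat{\eta}\vee\mathcal{B}_{\hat{S}})-H_{\hat{\omega}}(\tilde{\mathcal{P}}\,|\,\hat{\eta}\vee\mathcal{B}_{\Phi})$, and dividing by $\log|\gamma|<0$ is the claimed formula (note that $\upsilon\ge0$, since conditioning on the finer $\sigma$-algebra $\mathcal{B}_{\hat{S}}$ can only lower the entropy).

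The main obstacle is the compatibility step just described: unlike in Lemma \ref{lem:D.1}, where one simply exploits $\tau$-invariance of a product measure, here one must assemble the standard restriction and functoriality properties of Rohlin disintegrations in the non-autonomous setting $S_x\mapsto S_{(x+j_1)/b}$, $\theta\mapsto\theta-\Delta$, and check that they fit the framework of \cite[\S3.1, \S3.3]{Falconer} and \cite[\S3]{Feng}. Once Lemma \ref{lem:symbal2} and the identities (\ref{FM}), (\ref{eq:projectionfunction}) are in hand this is routine, and the $L^1$-bound on $h$ from the lemma preceding Lemma \ref{lem:projection2} supplies the domination needed for Maker's theorem. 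With these ingredients the remaining estimates mirror Section \ref{sec:d1}, so I would only point out the modifications and otherwise refer to \cite{Falconer}, consistent with the sketchy level of detail announced for this subsection.
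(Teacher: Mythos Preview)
Your proposal is correct and follows essentially the same route as the paper: pass to symbolic space via $(m_x)^{\eta_\theta}_{S_x(\textbf{j})}=S_x(\nu^{\mathbb{Z}_+}_{(\textbf{j},x,\theta)})$, telescope $\nu^{\mathbb{Z}_+}_{(\textbf{j},x,\theta)}(\mathbf{B}_{S_x}(\textbf{j},n))$ along the $\hat{T}$-orbit using Lemma~\ref{lem:projection2}, and conclude by Maker's theorem (for the $h_{n-k}$ array, via (\ref{lem:Feng2})) together with Birkhoff (for the $\mathbf{I}_{\hat\omega}(\tilde{\mathcal P}\mid\hat\eta\vee\mathcal B_\Phi)$ term). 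The only cosmetic difference is in the compatibility step: where you invoke Rohlin functoriality for the identity $\tau_*\big(\nu^{\mathbb{Z}_+}_{(\textbf{j},x,\theta)}|_{[j_1]}/\nu^{\mathbb{Z}_+}_{(\textbf{j},x,\theta)}([j_1])\big)=\nu^{\mathbb{Z}_+}_{\hat T(\textbf{j},x,\theta)}$, the paper instead derives it concretely from the density characterization of the conditionals (Lemma~\ref{lem: density}) combined with Lemmas~\ref{lem:projection} and~\ref{lem:Feng0}; both arguments yield the same recursion $\nu^{\mathbb{Z}_+}_{(\textbf{j},x,\theta)}(\mathbf{B}_{S_x}(\textbf{j},k)\cap\mathcal P(\textbf{j}))=b^{-\mathbf{I}_{\hat\omega}(\tilde{\mathcal P}\mid\hat\eta\vee\mathcal B_\Phi)(\textbf{j},x,\theta)}\cdot\nu^{\mathbb{Z}_+}_{\hat T(\textbf{j},x,\theta)}(\mathbf{B}_{S_{(x+j_1)/b}}(\tau(\textbf{j}),k-1))$.
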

We only give a  sketchy proof here, see \cite[Section 3.3]{Falconer} for details if the readers are unfamiliar with Ledrappier-Young theory.
\begin{proof}
	Combining Lemma \ref{lem:symbal}, Lemma \ref{lem:Feng0} with Lemma \ref{lem:projection2} and Lemma \ref{lem: density}  the following holds:
	for every  $k\in\mathbb{Z}_+$, for $\hat{\omega}$-a.e. $(\textbf{j},x,\theta)\in\Sigma\times[0,1)^2,$ we have
	$$\nu^{\mathbb{Z}_+}_{(\textbf{j},\,x,\,\theta)}\big(\mathbf{B}_{ S_x}(\textbf{j},\,k)\cap\mathcal{P}(\textbf{j})\big)=\nu^{\mathbb{Z}_+}_{\hat{T}(\textbf{j},\,x,\,\theta)}\big(\mathbf{B}_{ S_{\frac{x+j_1}b}}(\tau(\textbf{j}),\,k-1)\big)\cdot\text{exp}\bigg(-\mathbf{I}_{\hat{\omega}}\big(\tilde{\mathcal{P}}|\hat{\eta}\vee\mathcal{B}_{\Phi}\big)(\textbf{j},x,\theta)\bigg).$$
	Thus we have
	\begin{equation}\nonumber
	\begin{aligned}
	&\nu^{\mathbb{Z}_+}_{(\textbf{j},\,x,\,\theta)}\big(\mathbf{B}_{ S_x}(\textbf{j},\,k)\cap\mathcal{P}(\textbf{j})\big)\\&=
	\prod_{k=0}^{n-1}\bigg(\frac{\nu^{\mathbb{Z}_+}_{\hat{T}(\textbf{j},\,x,\,\theta)}\big(\mathbf{B}_{ S_{\textbf{j}_k(x) }
		}(\tau^k(\textbf{j}),\,n-k)\big)}{\nu^{\mathbb{Z}_+}_{\hat{T}(\textbf{j},\,x,\,\theta)}\big(\mathbf{B}_{ S_{\textbf{j}_k(x) }
		}(\tau^k(\textbf{j}),\,n-k)\cap\mathcal{P}(\tau^k(\textbf{j}))\big)}\bigg)\cdot\text{exp}\bigg(-\mathbf{I}_{\hat{\omega}}\big(\tilde{\mathcal{P}}|\hat{\eta}\vee\mathcal{B}_{\Phi}\big)\circ\hat{T}^k(\textbf{j},x,\theta)\bigg),
	\end{aligned}	
	\end{equation}
	which implies, for $\hat{\omega}$-a.e. $(\textbf{j},\,x,\,\theta)\in\Sigma\times[0,1)^2, $ we have
	$$\lim_{n\to\infty}\frac{\log\nu^{\mathbb{Z}_+}_{(\textbf{j},\,x,\,\theta)}\big(\mathbf{B}_{ S_x}(\textbf{j},\,k)\cap\mathcal{P}(\textbf{j})\big)}{n\log|\gamma|}=\frac{H_{\hat{\omega}}\big(\tilde{\mathcal{P}}|\hat{\eta}\vee\mathcal{B}_{\hat{S}}\big)-H_{\hat{\omega}}\big(\tilde{\mathcal{P}}|\hat{\eta}\vee\mathcal{B}_{\Phi}\big)}{\log|\gamma|}$$
	by Theorem \ref{thm:Maker}, Lemma \ref{lem:Feng2} and Birkhoff Ergodic Theorem.
\end{proof}
\begin{lemma}\label{lem:D.0}
	For Lebesgue-$\text{a.e.}\,(x,\theta)\in [0,1)^2$, for $m_x$-a.e. $z\in\mathbb{C}$, 
	$$\lim_{r\to 0}\frac{\log\big(m_x(\mathbf{B}(z,\,r))\big)}{\log r}=\frac{H_{\hat{\omega}}\big(\tilde{\mathcal{P}}|\hat{\eta}\vee\mathcal{B}_{\hat{S}}\big)-\log b}{\log|\gamma|}.$$
\end{lemma}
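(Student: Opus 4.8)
The plan is to repeat the proof of Lemma \ref{lem:D.1} verbatim, with the projected coding map $\pi_{\theta}\circ S_x$ replaced throughout by $S_x$ itself, the $\sigma$-algebra $\mathcal{B}_{\Phi}$ replaced by $\mathcal{B}_{\hat{S}}$, and Lemma \ref{lem:projection} replaced by Lemma \ref{lem:projection2}. The coordinate $\theta$ does not enter $S_x$, but I keep it so that the ambient system stays the ergodic system $(\Sigma\times[0,1)^2,\mathcal{B},\hat{\omega},\hat{T})$ of Lemma \ref{lem:ergodic}. The one genuinely new input is the $\hat{S}$-version of Lemma \ref{lem:Feng0}: applying \cite[Proposition 3.5]{Feng} to the measurable partition of $\Sigma$ into the level sets cut out by $\mathcal{B}_{\hat{S}}$ shows that for $\hat{\omega}$-a.e. $(\textbf{j},x,\theta)$,
\begin{equation}\nonumber
\lim_{n\to\infty}-\log\frac{\nu^{\mathbb{Z}_+}\big(\mathbf{B}_{S_x}(\textbf{j},n)\cap\mathcal{P}(\textbf{j})\big)}{\nu^{\mathbb{Z}_+}\big(\mathbf{B}_{S_x}(\textbf{j},n)\big)}=\mathbf{I}_{\hat{\omega}}\big(\tilde{\mathcal{P}}\,|\,\hat{\eta}\vee\mathcal{B}_{\hat{S}}\big)(\textbf{j},x,\theta),
\end{equation}
and that the supremum over $n$ of the left-hand side lies in $L^1(\hat{\omega})$; this is proved exactly as Lemma \ref{lem:Feng0}, the sets $\mathbf{B}_{S_x}(\textbf{j},n)$ from (\ref{def:ball}) playing for $\mathcal{B}_{\hat{S}}$ the role that $\mathbf{B}_{\pi_{\theta}\circ S_x}(\textbf{j},n)$ plays for $\mathcal{B}_{\Phi}$.

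Next I would iterate Lemma \ref{lem:projection2}. Using that $\mathcal{P}$ has $b$ atoms of equal $\nu^{\mathbb{Z}_+}$-mass $1/b$, so that $\nu^{\mathbb{Z}_+}\big(\tau^{-1}(A)\cap\mathcal{P}(\textbf{j})\big)=\tfrac1b\nu^{\mathbb{Z}_+}(A)$, and that $\mathbf{B}_{S_{\textbf{j}_n(x)}}(\tau^n(\textbf{j}),0)=\Sigma$ since $R^{\phi}_{\gamma}$ exceeds the diameter of the range of every $S_y$, this yields the telescoping identity
\begin{equation}\nonumber
\nu^{\mathbb{Z}_+}\big(\mathbf{B}_{S_x}(\textbf{j},n)\big)=\prod_{k=0}^{n-1}\frac{\nu^{\mathbb{Z}_+}\big(\mathbf{B}_{S_{\textbf{j}_k(x)}}(\tau^k(\textbf{j}),n-k)\big)}{b\,\nu^{\mathbb{Z}_+}\big(\mathbf{B}_{S_{\textbf{j}_k(x)}}(\tau^k(\textbf{j}),n-k)\cap\mathcal{P}(\tau^k(\textbf{j}))\big)},\qquad\textbf{j}_0(x)=x.
\end{equation}
With $g_m(\textbf{j},x,\theta):=-\log\big(\nu^{\mathbb{Z}_+}(\mathbf{B}_{S_x}(\textbf{j},m)\cap\mathcal{P}(\textbf{j}))/\nu^{\mathbb{Z}_+}(\mathbf{B}_{S_x}(\textbf{j},m))\big)$, the $k$-th factor equals $b^{-1}\exp\big(g_{n-k}\circ\hat{T}^k(\textbf{j},x,\theta)\big)$, so $\log\nu^{\mathbb{Z}_+}(\mathbf{B}_{S_x}(\textbf{j},n))=n\log(1/b)+\sum_{k=0}^{n-1}g_{n-k}\circ\hat{T}^k(\textbf{j},x,\theta)$. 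Dividing by $n\log|\gamma|$ and invoking Maker's ergodic theorem (Theorem \ref{thm:Maker}), together with the $\hat{S}$-version of Lemma \ref{lem:Feng0} and $\int\mathbf{I}_{\hat{\omega}}(\tilde{\mathcal{P}}|\hat{\eta}\vee\mathcal{B}_{\hat{S}})\,d\hat{\omega}=H_{\hat{\omega}}(\tilde{\mathcal{P}}|\hat{\eta}\vee\mathcal{B}_{\hat{S}})$, gives for $\hat{\omega}$-a.e. $(\textbf{j},x,\theta)$
\begin{equation}\nonumber
\lim_{n\to\infty}\frac{\log\nu^{\mathbb{Z}_+}\big(\mathbf{B}_{S_x}(\textbf{j},n)\big)}{n\log|\gamma|}=\frac{H_{\hat{\omega}}\big(\tilde{\mathcal{P}}\,|\,\hat{\eta}\vee\mathcal{B}_{\hat{S}}\big)-\log b}{\log|\gamma|}=:\alpha.
\end{equation}

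Finally, since $m_x=S_x(\nu^{\mathbb{Z}_+})$, definition (\ref{def:ball}) gives $\nu^{\mathbb{Z}_+}(\mathbf{B}_{S_x}(\textbf{j},n))=m_x\big(\mathbf{B}(S_x(\textbf{j}),R^{\phi}_{\gamma}|\gamma|^n)\big)$, and the full-$\hat{\omega}$-measure set above projects onto a full-measure set of $x\in[0,1)$ for which, for $\nu^{\mathbb{Z}_+}$-a.e. $\textbf{j}$ and hence $m_x$-a.e. $z=S_x(\textbf{j})$, the last display holds; a routine squeeze in the radius (for $r>0$ pick $n$ with $R^{\phi}_{\gamma}|\gamma|^{n+1}<r\le R^{\phi}_{\gamma}|\gamma|^n$) turns this into $\lim_{r\to0}\log m_x(\mathbf{B}(z,r))/\log r=\alpha$, which is the assertion. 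The part I expect to require the most care is the $\hat{S}$-analogue of Lemma \ref{lem:Feng0}: one must verify that the symbolic balls $\mathbf{B}_{S_x}(\textbf{j},n)$ form an admissible differentiation basis so that \cite[Proposition 3.5]{Feng} delivers both the pointwise convergence to the conditional information and the integrable domination that Maker's theorem needs — this is the same technical core as in \cite{Feng,Falconer}, which is why the whole argument is presented only in outline.
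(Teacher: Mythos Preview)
Your proposal is correct and is exactly the approach the paper takes: the paper's proof reads in full ``The proof is similar to Lemma \ref{lem:D.1} by Lemma \ref{lem:projection2}'', and you have faithfully unpacked what that means, including the need for an $\hat{S}$-analogue of Lemma \ref{lem:Feng0} (which the paper leaves implicit but which follows from \cite[Proposition 3.5]{Feng} in the same way). One cosmetic point: naming the limit $\alpha$ is slightly risky since the paper already uses $\alpha$ for $\dim(m_x)$; of course the content of Lemma \ref{lem:D.0} together with Theorem \ref{TheoremB} is precisely that these coincide, but in a standalone proof it would be cleaner to avoid reusing the symbol.
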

\begin{proof}
	The proof is similar to Lemma \ref{lem:D.1} by Lemma \ref{lem:projection2}.
\end{proof}
\begin{proof}[The proof of Theorem \ref{thm:DC}]
 Combining Lemma \ref{lem:D.0} and Lemma \ref{lem:D.1} with Lemma \ref{lem:D.2}, 
 we have Theorem \ref{thm:DC}  holds.
\end{proof}

\section{The inverse theorem for entropy}\label{sec:inverse}
The main result of 
this section is  Theorem \ref{thm:entropygrowth}, which is the corollary of \cite[Theorem 2.8]{hochman2021} by Theorem \ref{thm:DC}. We will  use Theorem \ref{thm:entropygrowth} to study  the convolution of $m_x$  and any  probability measures on $\mathbb{C}$ in Sect. \ref{sec:proveA}.
\begin{theorem}\label{thm:entropygrowth}
    Let  integer $b\ge 2$ and $\gamma\in\mathbb{C}$ such that $0<|\gamma|<1$.
	Let $\phi:\mathbb{R}\to\mathbb{R}$ be a $\mathbb{Z}$-periodic Lipschitz function  such that condition (H) holds. If  $\alpha<2$  and $\Delta\in\mathbb{R}\setminus\mathbb{Q}$, then the following holds.
  
  For  any $\eps>0$ and $R>0$, there are $\delta_6=\delta_6(\eps,\,R,\,\phi,\,\gamma),\,\delta_7=\delta_7(\eps,\,R,\,\phi,\,\gamma)>0$ with the following properties:
  for every $m>M(\eps,\,R,\,\phi,\,\gamma)$, $n>N(\eps,\,R,\,\phi,\,\gamma,\,m)$ and  $x\in[0,1) $, if  
  $\mu\in\mathscr{P}\big([-R,R]^2\big) $ is a measure such that
 $$\frac1n H(\mu,\,\mathcal{L}_n)>\eps,$$
 and if
 	$$\mathbb{P}^{\,m_x}_{0\le i< n} \left(\frac{1}{m} H ((m_x)^{x,\,i},\mathcal{L}^{\mathbb{C}}_{m})<\alpha+\delta_6\right) >1-\delta_6,$$
 then we have
 $$\frac1n H(\mu\ast m_x,\,\mathcal{L}_n)\ge \frac1n H(m_x,\,\mathcal{L}_n)+\delta_7.$$
\end{theorem}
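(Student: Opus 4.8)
The plan is to derive Theorem~\ref{thm:entropygrowth} as a consequence of Hochman's inverse theorem \cite[Theorem 2.8]{hochman2021} applied to the convolution $\mu \ast m_x$ on $\mathbb{R}^2 \cong \mathbb{C}$. Suppose for contradiction that the conclusion fails: for some $\eps, R > 0$ and suitable sequences $m \to \infty$, $n \to \infty$ we can find $x$ and $\mu \in \mathscr{P}([-R,R]^2)$ with $\frac1n H(\mu,\mathcal{L}_n) > \eps$, with the component hypothesis on $m_x$ holding with $\delta_6 \to 0$, yet $\frac1n H(\mu \ast m_x, \mathcal{L}_n) < \frac1n H(m_x, \mathcal{L}_n) + \delta_7$ with $\delta_7 \to 0$. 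Feeding this near-equality into \cite[Theorem 2.8]{hochman2021} (in the multi-scale/component form, as used in \cite[Lemma 3.21]{hochman2021} and in \cite{Ren2022}), one obtains that for a $(1-o(1))$-fraction of scales $i$, the component $(m_x)^{x,i}$ is, at resolution $m$, concentrated near a coset of some proper linear subspace $V_i \subsetneq \mathbb{R}^2$ — i.e.\ it is ``uniform'' in one direction and essentially atomic in a complementary direction, up to small entropy error. This is the ``$\mu$ is essentially a.p.\ along $m_x$-relevant subspaces'' dichotomy of the high-dimensional inverse theorem.

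The next step is to convert this concentration statement into a statement about projections. Because the possible subspaces $V_i$ form a compact family (lines through the origin in $\mathbb{R}^2$, parametrized by $\theta \in [0,1)$), a pigeonhole/averaging argument over the $(1-o(1))$-fraction of good scales produces a single direction $\theta$ (more precisely, two distinct directions $\theta_1 \neq \theta_2$, after refining) such that the projected measure $\pi_{\theta} m_x$ has small entropy growth at a positive-density set of scales, hence $\mathrm{dim}(\pi_{\theta} m_x)$ is strictly smaller than it ``should'' be: concretely, one gets $\mathrm{dim}(\pi_\theta m_x) < 1$ together with $\mathrm{dim}(m_x) = \mathrm{dim}(\pi_\theta m_x) + \mathrm{dim}\big((m_x)^{\eta_\theta}_z\big)$ forcing the conditional dimension to be essentially $\alpha - \beta$. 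Invoking Theorem~\ref{thm:DC}, the quantities $\beta = \mathrm{dim}(\pi_\theta m_x)$ and $\upsilon = \mathrm{dim}((m_x)^{\eta_\theta}_z)$ are a.e.\ constant in $(x,\theta)$, and $\alpha = \beta + \upsilon$; since $\alpha < 2$ is assumed, at least one of $\beta, \upsilon$ is $< 1$. One then uses Corollary~\ref{cor:relation}: if $\alpha \ge \beta + 1$ then $\alpha \ge 2$, contradicting $\alpha < 2$; hence $\alpha < \beta + 1$, i.e.\ $\upsilon = \alpha - \beta < 1$. The point is that the inverse theorem is being forced to land in the ``subspace concentration'' case, and Theorem~\ref{thm:DC} says that case is governed by the fixed numbers $\beta, \upsilon$, which are incompatible with the entropy being non-negligible in the quotient direction that $\mu$ lives in.

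To close the contradiction I would argue that the component hypothesis $\frac1m H((m_x)^{x,i}, \mathcal{L}^{\mathbb{C}}_m) < \alpha + \delta_6$ for most $i$, combined with $\mathrm{dim}(m_x) = \alpha$ from Theorem~\ref{TheoremB}, pins down the ``shape'' of most components of $m_x$: by Lemma~\ref{lem:decomposition} and Proposition~\ref{prop:Young} most components carry entropy essentially $\alpha$ at scale $m$, so the subspace-concentration extracted from the inverse theorem cannot be accommodated unless the direction of concentration is (in the limit) one of the distinguished directions of $m_x$ — and then the dimension-conservation identity of Theorem~\ref{thm:DC} shows the conditional measures in the transverse direction already saturate, leaving no room for $\mu \ast m_x$ to have strictly larger entropy than $m_x$ only if $\mu$ itself had entropy zero in that direction at most scales, contradicting $\frac1n H(\mu, \mathcal{L}_n) > \eps$ (which forces $\mu$ to have genuine $2$-dimensional or at least non-negligible entropy). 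Quantifying ``most'', ``essentially'', and the $\delta$'s is routine multi-scale bookkeeping in the style of \cite[Sections 4--5]{hochman2014self} and \cite{Ren2022}.

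The main obstacle, and the genuinely new content over \cite{Ren2022}, is the passage from the high-dimensional inverse theorem \cite[Theorem 2.8]{hochman2021} — whose conclusion is an awkward statement about concentration near unions of affine subspaces at a density-one set of scales — to the clean projection statement that feeds into Theorem~\ref{thm:DC}. In the one-dimensional setting of \cite{Ren2022} the inverse theorem \cite[Theorem 2.7]{hochman2014self} directly gives an arithmetic-progression structure, but in $\mathbb{R}^2$ one must additionally (i) handle the continuum of possible subspace directions and extract a positive-density set of scales adapted to a \emph{fixed} direction, and (ii) rule out the ``full-dimensional'' branch of the dichotomy using the hypothesis $\alpha < 2$ via Corollary~\ref{cor:relation}. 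Getting the uniformity of $\delta_6, \delta_7, M, N$ to depend only on $(\eps, R, \phi, \gamma)$ and not on $x$ requires that the dimension-conservation constants $\beta, \upsilon$ from Theorem~\ref{thm:DC} are genuinely global (independent of $x$), which is exactly why Theorem~\ref{thm:DC} was proved in that form.
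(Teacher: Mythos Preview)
Your overall architecture is right: argue by contradiction, feed the failure of entropy growth into Hochman's inverse theorem to obtain subspaces $V_i\le\mathbb{C}$ at most scales $i$ such that $(m_x)^{z,i}$ is $(V_i,\delta,m)$-saturated and $\mu^{y,i}$ is $(V_i,\delta)$-concentrated, then eliminate each possible value of $\dim V_i$. You also correctly see that the entropy-porosity hypothesis rules out $\dim V_i=2$, and that Corollary~\ref{cor:relation} (giving $\alpha<\beta+1$ from $\alpha<2$) is the key numerical input.

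The gap is in how you eliminate $\dim V_i=1$. Your plan is to pigeonhole on the continuum of directions and extract a fixed $\theta$ for which $\pi_\theta m_x$ has small entropy growth at a positive-density set of scales, then conclude $\dim(\pi_\theta m_x)<\beta$. That inference fails: small entropy at a positive-density set of scales does not force the dimension down, since the average over all scales may still equal $\beta$. Moreover the $V_i$ can vary with $i$, so no pigeonhole yields a density-one set of scales with a common direction. The paper avoids this entirely via Lemma~\ref{lem:projectionlow} (built on Lemma~\ref{lem:entropyprojection} and ultimately on the unique ergodicity of the irrational rotation $\theta\mapsto\theta-\Delta$): for \emph{every} $\theta\in[0,1]$ simultaneously and uniformly in $x$, one has $\mathbb{P}^{m_x}_{i=k}\big(\tfrac1mH(\pi_\theta(m_x)^{z,i},\mathcal{L}_m)\ge\beta-\delta\big)\ge1-\delta$ once $m,k$ are large. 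Then $(V_i,\delta_6,m)$-saturation with $\dim V_i=1$ gives $\tfrac1mH((m_x)^{z,i},\mathcal{L}_m)\ge\tfrac1mH(\pi_{V_i^\perp}(m_x)^{z,i},\mathcal{L}_m)+1-\delta_6\ge\beta+1-2\delta_6>\alpha+\delta_6$, contradicting the porosity hypothesis at that scale regardless of which line $V_i$ is. Hence $V_i=\{0\}$ at most scales, $\mu$ is $(\{0\},\delta_6)$-concentrated at most scales, and Lemma~\ref{lem:Hochman3} forces $\tfrac1nH(\mu,\mathcal{L}_n)<\eps$. The missing idea is thus the uniform-in-$\theta$ lower bound on projection entropy of components, not a pigeonhole on directions.
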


\subsection{The projection theorem for entropy}
This subsection is devoted to analyze the uniform low bound of $\frac1n H(\pi_{\theta}m_x,\,\mathcal{L}_n)$ for all $x,\,\theta\in [0,1] $  when $n$ is large enough. See Lemma \ref{lem:entropyprojection} for details.
We shall first introduce the following Lemma \ref{lemNonatomic}, whose idea is inspired by \cite[Lemma 5.1]{Ren2022}.
\begin{lemma}\label{lemNonatomic}
	If condition (H) holds and $\Delta\in\mathbb{R}\setminus\mathbb{Q}$, then the measure $\pi_{\theta}m_x$ has no atom for each $x,\,\theta\in [0,1].$
\end{lemma}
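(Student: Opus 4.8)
The plan is to argue by contradiction: suppose $\pi_\theta m_x$ has an atom for some fixed $x,\theta\in[0,1]$, and derive that condition (H) fails, using the irrationality of $\Delta$ to propagate the atomic structure through all the pieces $\pi_{\theta-n\Delta}m_{\mathbf{w}(x)}$. First I would let $p^* = \max_z \pi_\theta m_x(\{z\})>0$ be the maximal atom mass and $A = \{z : \pi_\theta m_x(\{z\}) = p^*\}$ the (finite, nonempty) set of maximal atoms. The self-similarity identity: from (\ref{FundementalFormular}) and (\ref{eq:projectionmeasure}) we have
\begin{equation}\nonumber
\pi_\theta m_x = \frac{1}{b^n}\sum_{\mathbf{w}\in\varLambda^n} g_{\mathbf{w},\theta}\big(\pi_{\theta-n\Delta} m_{\mathbf{w}(x)}\big),
\end{equation}
where $g_{\mathbf{w},\theta}$ is the affine contraction $z\mapsto |\gamma|^n z + \pi_\theta\circ S_x(\mathbf w)$. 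Since the right-hand side is an average, a maximal atom of $\pi_\theta m_x$ forces a maximal atom (of the same normalized mass) in each summand whose support can reach it; standard extremal arguments (as in \cite[Lemma 5.1]{Ren2022}) then show that for $n=1$ already, $\pi_{\theta-\Delta}m_{(x+j_1)/b}$ has an atom of mass $\geq p^*$ for every digit $j_1$, hence by maximality exactly $p^*$, and the map $g_{j_1,\theta}$ carries its maximal-atom set onto $A$.

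Next I would iterate: by induction on $n$, every measure $\pi_{\theta-n\Delta}m_{\mathbf{w}(x)}$, $\mathbf{w}\in\varLambda^n$, has an atom of mass exactly $p^*$. Now I bring in $\Delta\notin\mathbb{Q}$: the orbit $\{\theta - n\Delta \bmod 1 : n\geq 0\}$ is dense in $[0,1)$, and (choosing $x$ to also vary, or arguing first for a.e.\ $x$ and then bootstrapping via continuity of $y\mapsto m_y$ in a suitable topology) one obtains that $\pi_\varphi m_y$ has a $p^*$-atom for a dense set of $(\varphi,y)$, and then for all $(\varphi,y)$ by a limiting/semicontinuity argument for atoms under weak-$*$ convergence combined with the uniform contraction rate $|\gamma|^n$. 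The key consequence I want to extract is: there exist two distinct symbolic sequences $\mathbf{i},\mathbf{j}\in\Sigma$ and a direction $\varphi$ with $\pi_\varphi\circ S_x(\mathbf i) = \pi_\varphi\circ S_x(\mathbf j)$ for all $x$ — indeed, an atom of positive mass in a self-affine measure generated by maps with a common contraction ratio forces exact coincidences $S(x,\mathbf i) - S(x,\mathbf j)$ lying in the fixed hyperplane $\ker\pi_\varphi$, and varying $\varphi$ over a dense set (using the atom persists for all $\varphi$) forces $S(x,\mathbf i) - S(x,\mathbf j)$ to vanish in $\mathbb C$, i.e.\ $S(x,\mathbf i)\equiv S(x,\mathbf j)$, contradicting (H).

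The main obstacle, which I would spend the most care on, is the second step: upgrading "$\pi_\theta m_x$ has an atom for the given $(x,\theta)$" to enough exact coincidences to kill all of condition (H), rather than just coincidences along one projection. The subtlety is that a single atom only yields $S(x,\mathbf i)-S(x,\mathbf j)\in\ker\pi_\theta$ for the specific $\theta$; to conclude $\equiv 0$ in $\mathbb C$ I must genuinely exploit that the atomic structure is preserved along the $\Delta$-rotation orbit and then pass to the closure, which is exactly where $\Delta\notin\mathbb Q$ is indispensable and where one must be careful that "has an atom of mass $\geq p^*$" is a closed condition under the relevant convergence (it is, since atoms can only merge or persist, not disappear, under weak-$*$ limits of uniformly contracting self-affine systems with a fixed finite branching). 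A clean way to organize this is to first show, using only the given $(x,\theta)$ and the iteration, that the function $\mathbf j\mapsto \pi_{\theta}\circ S_x(\mathbf j)$ is constant on a cylinder — which already contradicts (H) restricted to that cylinder's two extensions — so in fact one does not need the full density argument, only finitely many iterates; I would present that streamlined version and relegate the density/closure remarks to a remark.
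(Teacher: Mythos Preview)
Your proposal has the right overall shape but contains a genuine gap in the extremality step and an incorrect shortcut at the end.

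\textbf{The extremality step.} You set $p^*=\max_z\pi_\theta m_x(\{z\})$ for the \emph{single} pair $(x,\theta)$ at hand, and then claim the averaging identity forces every child $\pi_{\theta-\Delta}m_{(x+j_1)/b}$ to carry an atom of mass $\ge p^*$, ``hence by maximality exactly $p^*$''. This does not follow: the average of the $b$ child-atom masses equals $p^*$, so you only learn that \emph{some} child has mass $\ge p^*$, not all of them; and your $p^*$ bounds only the atoms of $\pi_\theta m_x$, not those of $\pi_{\theta-\Delta}m_{(x+j_1)/b}$, so there is no maximality to invoke on the children. The paper repairs exactly this by taking the \emph{global} maximum
\[
\pi_{\theta_1}m_{x_1}(\{p\})=\max_{x,\theta\in[0,1],\,z\in\mathbb{R}}\pi_\theta m_x(\{z\}),
\]
which exists by weak-$*$ compactness of $\{\pi_\theta m_x:x,\theta\in[0,1]\}$ and continuity of $(x,\theta,\mathbf j)\mapsto\pi_\theta\circ S_x(\mathbf j)$. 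With the global maximum in place, the averaging identity does force every child measure, at every level $n$ and every word $\mathbf w\in\varLambda^n$, to attain the same atom mass, and the atom locations $p_{\mathbf w}$ are then uniformly bounded (since all the supports are).

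\textbf{The ``streamlined'' conclusion.} Your final paragraph proposes to avoid density entirely and conclude from finitely many iterates that $\mathbf j\mapsto\pi_\theta\circ S_x(\mathbf j)$ is constant on a cylinder, claiming this already contradicts (H). It does not: condition (H) asserts $S(\,\cdot\,,\mathbf i)-S(\,\cdot\,,\mathbf j)\not\equiv 0$ as a function on $[0,1]$, so a coincidence at a single $x$, and only after projecting by $\pi_\theta$, is no contradiction. More tellingly, a finite-iterate argument would never use $\Delta\notin\mathbb Q$, yet the lemma is genuinely false for rational $\Delta$ (e.g.\ if $\gamma\in(0,1)$ then $m_x$ is supported on $\mathbb R$ and $\pi_{1/4}m_x=\delta_0$). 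The irrationality must be used. Once the global maximum is taken, your density-plus-closure route can be made to work (the condition ``has an atom of mass $\ge p^*$'' is indeed weak-$*$ closed on uniformly supported families), but the paper proceeds more directly: it picks $(x_2,\theta_2)$ with $\pi_{\theta_2}\circ S_{x_2}(0_\infty)\ne\pi_{\theta_2}\circ S_{x_2}(1_\infty)$ (available since (H) holds), approximates $(x_2,\theta_2)$ by $(\mathbf w^{n_t}(x_1),\,\theta_1-n_t\Delta\bmod 1)$ using $\Delta\notin\mathbb Q$, and then the uniform bound $M_0=\sup_{\mathbf w}|p_{\mathbf w}|<\infty$ forces
\[
\big|\pi_{\theta_1-n_t\Delta}\circ S_{\mathbf w^{n_t}(x_1)}(0_m)-\pi_{\theta_1-n_t\Delta}\circ S_{\mathbf w^{n_t}(x_1)}(1_m)\big|\le 2|\gamma|^m M_0,
\]
a contradiction as $t,m\to\infty$.
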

\begin{proof}
	Assume the lemma fails. Since the support of $\pi_{\theta}m_x$ is contained in the  ball $\mathbf{B}(0,\frac{\parallel\phi\parallel_{\infty}}{1-|\gamma|})$ for all $x,\,\theta\in [0,1]$,
	these are $x_1,\,\theta_1\in [0,1]$ and $p\in\mathbb{R}$ such that 
	\begin{equation}\label{EqMaxActom}
	\pi_{\theta_1}m_{x_1}(\{p\})=\max_{x,\,\theta\in [0,1],\,z\in\mathbb{R}}\pi_{\theta}m_x(\{z\})>0
	\end{equation}
	by the compactness of the probability measures $\pi_{\theta}m_x$ in the weak star topology and the continuity of function $\pi_{\theta}\circ S_x$. Therefore, for any $n\in\mathbb{N}$, (\ref{FundementalFormular}) and (\ref{eq:projectionmeasure}) imply
	\begin{equation}\label{eq:x_0}
	\pi_{\theta_1}m_{x_1}(\{p\})=\frac1{b^n}\sum_{\textbf{w}\in\varLambda^n}\pi_{\theta_1-n\Delta}m_{\textbf{w}(x_1)}(\{p_{\textbf{w}}\}).
	\end{equation}
	where $p_{\textbf{w}}=\frac{p-\pi_{\theta_1}\circ S_{x_1}(\textbf{w})}{|\gamma|^n}.$
	Combining this with  (\ref{EqMaxActom}) 
	and (\ref{eq:x_0})
	we have
	$$\pi_{\theta_1-n\Delta} m_{\textbf{w}(x_1)}( \{p_{\textbf{w}}\})=	\pi_{\theta_1}m_{x_1}(\{p\})>0\quad\quad\forall\,\textbf{w}\in\varLambda^n.$$
	This implies
	\begin{equation}\label{EqActom}
	M_0:=\sup_{\textbf{w}\in\varLambda^{\#}}|P_{\textbf{w}}|<\infty,
	\end{equation}
	since the supports of the family of probability measures $\{\,\pi_{\theta}m_x\,\}_{\theta,\,x\in[0,1]}$ have uniform bound in $\mathbb{R}$.
	
	Since $S(x,0_{\infty})-S(x,1_{\infty})\not\equiv0$, there are $x_2,\,\theta_2\in [0,1) $ such that
	\begin{equation}\label{eq:notzero}
	\pi_{\theta_2}\circ S_{x_2}(0_{\infty})-\pi_{\theta_2}\circ S_{x_2}(1_{\infty})\neq 0.
	\end{equation}
	Let $n_t\in\mathbb{N}$ and $\textbf{w}^{n_t}\in\varLambda^{n_t},\,t=1,2,\ldots$ be such that $\lim_{t\to\infty}(\,\theta_1-n_t\Delta\mod1)=\theta_2$ and
	$\lim_{t\to\infty}\textbf{w}^{n_t}(x_1)=x_2$, since $\Delta\notin\mathbb{Q}$ and $x_2$ is  $1/b^{n_t}$-approximate by
	 the set $\{\,\textbf{w}(x_1):\,\textbf{w}\in\varLambda^{n_t}\}$.
	For any $t,\,m\in\mathbb{Z}_+$,
	by the definition of $p_{\textbf{w}}$ we  have
	$$\pi_{\theta_1}\circ S_{x_1}(\textbf{w}^{n_t}0_m)-\pi_{\theta_1}\circ S_{x_1}(\textbf{w}^{n_t}1_m)=|\gamma|^{n_t+m}(p_{\textbf{w}^{n_t}1_m}-p_{\textbf{w}^{n_t}0_m}),$$
	 which implies that 
	$$\big|\pi_{(\theta_1-n_t\Delta\mod1)}\circ S_{\textbf{w}^{n_t}(x_1)}(0_m)-\pi_{(\theta_1-n_t\Delta\mod1)}\circ S_{\textbf{w}^{n_t}(x_1)}(1_m)\big|\le2|\gamma|^m M_0$$
	where $1_m=11\ldots1,0_m=00\ldots0\in\varLambda^m.$ 
	This contradicts (\ref{eq:notzero})
	when $m,\,t$ goes to infinity.
\end{proof}
For any $n\in\mathbb{N}$, let $\tilde{n}$ be the unique integer such that
$b^{-\tilde{n}}\le|\gamma|^n<b^{-\tilde{n}+1}$.

\begin{lemma}\label{lem:entropyprojection}
	If the condition (H) holds and $\Delta\notin\mathbb{Q}$, we have
	$$\lim_{n\to\infty}\inf_{x,\,\theta\in[0,1] }\frac1nH(\pi_{\theta}m_x,\,\mathcal{L}_n)=\beta.$$
\end{lemma}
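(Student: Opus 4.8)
The plan is to squeeze $\inf_{x,\theta}\tfrac1n H(\pi_\theta m_x,\mathcal L_n)$ between two bounds that both tend to $\beta$, where $\beta$ is the constant produced by Theorem \ref{thm:DC} (D.1); by Proposition \ref{prop:Young} this $\beta$ is precisely the common value of $\dim(\pi_\theta m_x)=\lim_n\tfrac1n H(\pi_\theta m_x,\mathcal L_n)$ for Lebesgue-a.e.\ $(x,\theta)$. The upper bound $\limsup_n\inf_{x,\theta}\tfrac1n H(\pi_\theta m_x,\mathcal L_n)\le\beta$ is immediate: evaluate at one generic pair $(x_0,\theta_0)$. The content is the lower bound $\liminf_n\inf_{x,\theta}\tfrac1n H(\pi_\theta m_x,\mathcal L_n)\ge\beta$, i.e.\ the passage from a.e.\ convergence to convergence uniform in $(x,\theta)$.

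Two ingredients carry this out. The first is \emph{scalewise continuity}: for each fixed $k$ the map $\Phi_k(x,\theta):=\tfrac1k H(\pi_\theta m_x,\mathcal L_k)$ is continuous (and uniformly bounded) on the compact space $[0,1]\times(\mathbb R/\mathbb Z)$. Indeed $\pi_\theta m_x=(\pi_\theta\circ S_x)(\nu^{\mathbb Z_+})$ is weak-$*$ continuous in $(x,\theta)$, being a fixed measure pushed forward under a jointly (hence uniformly) continuous family of maps, and by Lemma \ref{lemNonatomic} every $\pi_\theta m_x$ is non-atomic, so $\mu\mapsto H(\mu,\mathcal L_k)$ is continuous at each $\pi_\theta m_x$ (a half-open $b$-adic cell has boundary of zero $\pi_\theta m_x$-mass). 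This is the key use of Lemma \ref{lemNonatomic}. The second is a \emph{coarse-entropy-preserving recursion}: starting from $\pi_\theta m_x=\tfrac1{b^n}\sum_{\textbf{w}\in\varLambda^n}g_{\textbf{w},\theta}\big(\pi_{\theta-n\Delta}m_{\textbf{w}(x)}\big)$ (from (\ref{FundementalFormular}) and (\ref{eq:projectionmeasure}), with $g_{\textbf{w},\theta}$ affine of linear part $|\gamma|^n$ and bounded translation), one decomposes $H(\pi_\theta m_x,\mathcal L_{\tilde n+k})$ along the refinement of $\mathcal L_{\tilde n}$ by $\mathcal L_{\tilde n+k}$, and estimates the conditional term from below by reorganising the sum over $\textbf{w}$ and using concavity (Lemma \ref{lem:concave}), Lemma \ref{lem:affinetransform}, and the fact that each piece $g_{\textbf{w},\theta}(\mathrm{supp}\,\pi_{\theta-n\Delta}m_{\textbf{w}(x)})$ meets only $O_{\phi,\gamma}(1)$ cells of $\mathcal L_{\tilde n}$. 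This yields, for all $x,\theta$, all $n\ge1$ and $k\ge0$, with an absolute $C=C(\phi,\gamma)$,
\begin{equation*}
H(\pi_\theta m_x,\mathcal L_{\tilde n+k})\ \ge\ H(\pi_\theta m_x,\mathcal L_{\tilde n})+\frac1{b^n}\sum_{\textbf{w}\in\varLambda^n}H\big(\pi_{\theta-n\Delta}m_{\textbf{w}(x)},\mathcal L_k\big)-C.
\end{equation*}
It is crucial to keep the $H(\pi_\theta m_x,\mathcal L_{\tilde n})$ term; the bare concavity bound (without it) throws away the $\approx\tilde n\beta$ of entropy encoded in the arrangement of the translates and is useless for the conclusion.

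Now fix $n$ and iterate the displayed inequality $L-1$ times down to level $\tilde n$. Writing $\theta_j=\theta-jn\Delta\bmod 1$ and using that the accumulated base points fill the grid $\{(x+\ell)/b^{jn}:0\le\ell<b^{jn}\}$, this gives
\begin{equation*}
\frac1{L\tilde n}H(\pi_\theta m_x,\mathcal L_{L\tilde n})\ \ge\ \frac1L\sum_{j=0}^{L-1}\frac1{b^{jn}}\sum_{\ell=0}^{b^{jn}-1}\Phi_{\tilde n}\!\Big(\tfrac{x+\ell}{b^{jn}},\,\theta_j\Big)\ -\ \frac{C}{\tilde n}.
\end{equation*}
For the $O_{\phi,\gamma}(1)$ smallest values of $j$ one simply drops the (nonnegative) inner term; for the remaining $j$ the inner sum is a Riemann sum of the continuous function $y\mapsto\Phi_{\tilde n}(y,\theta_j)$ on a grid finer than $b^{-\tilde n}$, hence within $C_d/\tilde n$ of $G_{\tilde n}(\theta_j):=\int_0^1\Phi_{\tilde n}(y,\theta_j)\,dy$ (using Lemma \ref{lem:pfclose} to control the modulus of continuity of $y\mapsto H(\pi_{\theta_j}m_y,\mathcal L_{\tilde n})$ at scale $b^{-\tilde n}$). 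Since $G_{\tilde n}$ is continuous and $\theta\mapsto\theta-n\Delta$ is a uniquely ergodic irrational rotation of the circle ($n\Delta\notin\mathbb Q$, cf.\ the proof of Lemma \ref{lem:ergodic}), $\tfrac1L\sum_j G_{\tilde n}(\theta_j)\to\int_0^1 G_{\tilde n}$ uniformly in $\theta$ as $L\to\infty$. Hence
\begin{equation*}
\liminf_{L\to\infty}\,\inf_{x,\theta}\,\frac1{L\tilde n}H(\pi_\theta m_x,\mathcal L_{L\tilde n})\ \ge\ \int_0^1\!\!\int_0^1\frac1{\tilde n}H(\pi_\theta m_x,\mathcal L_{\tilde n})\,dx\,d\theta\ -\ \frac{C+C_d}{\tilde n}.
\end{equation*}
Because $H(\cdot,\mathcal L_N)$ is non-decreasing in $N$, the restriction to $N\in\{L\tilde n\}_L$ is harmless; then letting $n\to\infty$ and invoking Theorem \ref{thm:DC} (D.1), Proposition \ref{prop:Young} and dominated convergence (the integrand lies in $[0,O_{\phi,\gamma}(1)]$ and tends a.e.\ to $\beta$), the right-hand side tends to $\beta$, so $\liminf_N\inf_{x,\theta}\tfrac1N H(\pi_\theta m_x,\mathcal L_N)\ge\beta$, completing the proof. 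I expect the main obstacle to be the coarse-entropy-preserving recursion: finding the right book-keeping so that the bounded distortions of the maps $g_{\textbf{w},\theta}$ and the bounded overlaps of the images across the grid $\mathcal L_{\tilde n}$ are absorbed into a single constant depending only on $\phi$ and $\gamma$, while the essential coarse-scale term $H(\pi_\theta m_x,\mathcal L_{\tilde n})$ is retained (with a minor additional subtlety in the thin regime $|\gamma|<1/b$, where the iteration grid is coarser than $b^{-\tilde n}$ for the first few steps and those must be discarded). By comparison, the continuity input from Lemma \ref{lemNonatomic} and the equidistribution step are routine.
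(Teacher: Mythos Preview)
Your proposal is correct and follows essentially the same route as the paper: a self-similar conditional-entropy recursion (your ``coarse-entropy-preserving recursion'' is exactly the paper's Claim~\ref{claim:H}, obtained via concavity of conditional entropy, Lemma~\ref{lem:affinetransform}, and the bounded-overlap observation), iterated and then averaged using unique ergodicity of the irrational rotation $\theta\mapsto\theta-n\Delta$, with the $x$-average handled by Lemma~\ref{lem:pfclose} and the limit identified via Theorem~\ref{thm:DC}~(D.1), Proposition~\ref{prop:Young}, and dominated convergence. The only cosmetic difference is that the paper replaces the base points $\textbf{u}(x)$ by a fixed finite grid $\{\textbf{w}(0):\textbf{w}\in\varLambda^{\tilde\ell}\}$ rather than passing to a Riemann integral in $x$; both rely on the same Lipschitz estimate, and the continuity of $\theta\mapsto H(\pi_\theta m_y,\mathcal L_k)$ needed for unique ergodicity comes (in both versions) from Lemma~\ref{lemNonatomic}.
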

\begin{proof}
	 We shall first prove the following claim.
	\begin{claim}\label{claim:H}
	For any $x,\,\theta\in[0,1]$ and  $\ell,\,n\in\mathbb{Z}_+$ such that $\ell,n/\ell$ is large enough, 
	 we have
	$$\frac1nH(\pi_{\theta}m_x,\,\mathcal{L}_n)\ge\bigg(1+O\big(\frac1{\tilde{\ell}}+\frac{\tilde{\ell}}n \big)\bigg)\cdot\frac1{ t_{n,\ell} } \sum_{k=0}^{ t_{n,\ell}-1}\bigg(\frac1{b^{\tilde{\ell}}}\sum_{\textbf{w}\in\varLambda^{\tilde{\ell}}}\frac1{\tilde{\ell}}H(\pi_{\theta-k\ell\Delta}m_{\textbf{w}(0)},\,\mathcal{L}_{\tilde{\ell}})\bigg)+O(\frac1{\tilde{\ell}}+\frac{\tilde{\ell}}n)$$
	where $t_{n,\ell}=\max\{t\in\mathbb{N}:\,\widetilde{t\ell}\le n\}$.
	\end{claim}
	This is enough to conclude the proof. Indeed, for any  $\ell\in\mathbb{N}$ such that $\hat{\ell}>0$, let $f_{\ell}:[0,1 )\to\mathbb{R}$ be a function such that
	$$f_{\ell}(\theta):=\frac1{b^{\tilde{\ell}}}\sum_{\textbf{w}\in\varLambda^{\tilde{\ell}}}\frac1{\tilde{\ell}}H(\pi_{\theta}m_{\textbf{w}(0)},\,\mathcal{L}_{\tilde{\ell}}).$$
	Since $\Delta\in\mathbb{R}\setminus\mathbb{Q}$ implies $\ell\Delta\notin\mathbb{Q}$, therefore we have 
	\begin{equation}\label{eq:UniformConverge}
	\lim_{n\to\infty}\inf_{\theta\in[0,1) }\frac1n\sum_{k=0}^{n-1}f_{\ell}(\theta-k\ell\Delta)=\int_{[0,1)}f_{\ell}(s)ds
	\end{equation}
	 by the Unique Ergodic Theorem \cite[Theorem 6.19]{GTM79}.
	Also we have
	$$\int_{[0,1)}f_{\ell}(s)ds=\int_{[0,1)^2}\frac1{\tilde{\ell}}H(\pi_{\theta}m_{x},\,\mathcal{L}_{\tilde{\ell}})dxd\theta+O(\frac1{\tilde{\ell}}).$$
	Combining this with
	$$\lim_{\ell\to\infty}\int_{[0,1)^2}\frac1{\tilde{\ell}}H(\pi_{\theta}m_{x},\,\mathcal{L}_{\tilde{\ell}})dxd\theta=\beta$$
	by Proposition \ref{prop:Young}, Theorem \ref{thm:DC} and Lebesgue Convergence Theorem, we have
	$$\lim_{\ell\to\infty}\int_{[0,1)}f_{\ell}(s)ds=\beta. $$
	Combining this with (\ref{eq:UniformConverge}) and Claim \ref{claim:H}, we have Lemma \ref{lem:entropyprojection} holds.
	
	 Now let us  prove Claim \ref{claim:H}. Since $n-\widetilde{t_{n,\ell}\ell}=O_{b,\,\gamma}(\tilde{\ell})$, we have
	$$\frac1nH(\pi_{\theta}m_x,\,\mathcal{L}_n)=\frac1n\sum_{k=0}^{t_{n,\ell}-1}H(\pi_{\theta}m_x,\,\mathcal{L}_{\widetilde{(k\ell+\ell)}}|\mathcal{L}_{\widetilde{(k\ell)}})+O(\frac{\tilde{\ell}}n).$$
	By the concavity of conditional entropy, this implies
	\begin{equation}\label{eq:lowerentropy}
	\frac1nH(\pi_{\theta}m_x,\,\mathcal{L}_n)\ge\frac1n\sum_{k=0}^{t_{n,\ell}-1}\frac1{b^{k\ell}}\sum_{\textbf{w}\in\varLambda^{k\ell}}H(\pi_{\theta}(T^{k\ell} m_{\textbf{w}(x)}),\,\mathcal{L}_{\widetilde{(k\ell+\ell)}}|\mathcal{L}_{\widetilde{k\ell}})+O(\frac{\tilde{\ell}}n).
	\end{equation}
	Also  (\ref{eq:projectionmeasure}) implies the measure $\pi_{\theta}(T^{k\ell} m_{\textbf{w}(x)})$ is supported in an interval of length $O(b^{-\widetilde{k\ell}} )$.
	Thus we have
	$$H(\pi_{\theta}(T^{k\ell} m_{\textbf{w}(x)}),\,\mathcal{L}_{\widetilde{(k\ell+\ell)}}|\mathcal{L}_{\widetilde{k\ell}})=H(\pi_{\theta}(T^{k\ell} m_{\textbf{w}(x)}),\,\mathcal{L}_{\widetilde{(k\ell+\ell)}})+O(1).$$
	 Combining this with
	 $\widetilde{(k\ell+\ell)}=\widetilde{k\ell}+\tilde{\ell}+O_{b,\,\gamma}(1)$ and (\ref{eq:projectionmeasure}), we have
	 $$H(\pi_{\theta}(T^{k\ell} m_{\textbf{w}(x)}),\,\mathcal{L}_{\widetilde{(k\ell+\ell)}}|\mathcal{L}_{\widetilde{k\ell}})=H(\pi_{\theta-k\ell\Delta} m_{\textbf{w}(x)},\,\mathcal{L}_{\tilde{\ell}})+O(1).$$
	 Therefore the following holds:
	 $$	\frac1nH(\pi_{\theta}m_x,\,\mathcal{L}_n)\ge\frac1n\sum_{k=0}^{t_{n,\ell}-1}\frac1{b^{k\ell}}\sum_{\textbf{w}\in\varLambda^{k\ell}}H(\pi_{\theta-k\ell\Delta}m_{\textbf{w}(x)},\,\mathcal{L}_{\tilde{\ell}})+O(\frac{\tilde{\ell}}n+\frac{t_{n,\ell}}n).$$
	 Since $\parallel\pi_{\theta-k\ell\Delta}\circ S_{\textbf{w}(x)}-\pi_{\theta-k\ell\Delta}\circ S_{(\tau^{k\ell-\tilde{\ell}}\textbf{w})(0)}\parallel_{\infty}=O(b^{-\tilde{\ell}})$
	for any $k\ge L_0(b,\gamma),$ therefore we have
	$$\frac1nH(\pi_{\theta}m_x,\,\mathcal{L}_n)\ge\frac1n\sum_{k=0}^{t_{n,\ell}-1}\frac1{b^{\tilde{\ell}}}\sum_{\textbf{w}\in\varLambda^{\tilde{\ell}}}H(\pi_{\theta-k\ell\Delta}m_{\textbf{w}(0)},\,\mathcal{L}_{\tilde{\ell}})+O(\frac{\tilde{\ell}}n+\frac{t_{n,\ell}}n).$$
	Combining this with $O(\frac{t_{n,\ell}}n)=O(\frac1{\tilde{\ell}})$ and
	
	 $$\frac1n=\frac{t_{n,\ell}\tilde{\ell}}n\cdot\frac1{t_{n,\ell}}\cdot\frac1{\tilde{\ell} }=\bigg(1+0(\frac1{\tilde{\ell}}+\frac{\tilde{\ell}}n)\bigg)\cdot\frac1{t_{n,\ell}}\frac1{\tilde{\ell} },$$
	 we have this claim holds.
\end{proof}

\subsection{An inverse theorem for entropy}
In this section, we shall introduce the inverse theorem for convolutions on $\mathbb{R}^d$  due to Hochman \cite{hochman2021}. Before that it is necessary to recall the following notations from  \cite{hochman2021}.

For a linear subspace $V\le\mathbb{C}$, let $W=V^{\perp}$ be the orthogonal complement of $V$ and let $\pi_V$ be the orthogonal projection on $V$. Thus, if $V=\{\,te^{2\pi i\theta}\,\}_{t\in\mathbb{R} }$ for some $\theta\in\mathbb{R}$, then we have $\pi_{V}=\pi_{\theta}.$ For any set $A\subset\mathbb{C}$ and $\eps>0$  write $\eps$-neighborhood of $A$ by
$$A^{(\eps)}:=\{z\in\mathbb{C}:\,d(z,A)<\eps\}.$$
\begin{definition}
	Let $V\le\mathbb{C}$ be a linear subspace and $\eps>0$. A measure $\mu\in\mathscr{P}(\mathbb{C})$ is $(V,\eps)$-concentrated if there is a translate $W$ such that $\mu(W^{(\eps)})\ge1-\eps.$
\end{definition}
We also need the following definition.
\begin{definition}
	Let $V\le\mathbb{C}$ be a linear subspace, $W=V^{\perp}$ is the orthogonal complement, and $\eps>0$. A probability measure $\mu\in\mathscr{P}(\mathbb{C})$ is $(V,\eps)$-saturated at scale $m$, or $(V,\eps, m)$-saturated, if
	$$\frac1mH(\mu,\mathcal{L}_m)\ge\frac1mH(\pi_{W}\mu,\mathcal{L}_m)+\text{dim}(V)-\eps.$$
\end{definition}
\begin{remark}\label{rem:saturated}
	It is worth noting that for the special case $V=\mathbb{C}$, $(V,\eps, m)$-saturated implies 
	$\frac1mH(\mu,\mathcal{L}_m)\ge 2-\eps$. Also $(V,\eps)$-saturated at scale $m$ is trivial if $V=\{0\}$.
\end{remark}
 The following Theorem \ref{thm:hochman} is the inverse theorem of Hochman, which serves as a basic tool to study the dimension of $m_x$ in our paper. See \cite[Theorem 2.8]{hochman2021}.
\begin{theorem}[Hochman]\label{thm:hochman}
	For any $R,\eps>0$ and $m\in\mathbb{N}$ there is a $\delta_0=\delta_0(\eps,\,R,\,m)>0$ such that for every $n>N_0(\eps,\,R,\,\delta_0,\,m)$, the following holds: if $\mu,\zeta\in\mathscr{P}([-R,R]^2)$ and
	$$\frac1nH(\mu\ast\zeta,\mathcal{L}_n)<\frac1nH(\mu,\mathcal{L}_n)+\delta_0,$$
	then there exists a sequence $V_0,\ldots V_n\le\mathbb{C}$ of subspace such that
	\begin{equation}\label{eq:hochman}
	\mathbb{P}^{\,\mu\times\zeta}_{0\le i< n} \left(\,
	\begin{matrix}
	\mu^{x,\,i}\,\text{is}\, (V_i,\,\eps,\,m)-\text{saturated and} \\
	\zeta^{y,\,i}\,\text{is}\, (V_i,\,\eps)-\text{concentrated}
	\end{matrix}
	\,\right) > 1-\varepsilon.
	\end{equation}
\end{theorem}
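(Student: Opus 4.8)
This statement is Hochman's inverse theorem for the entropy of convolutions in Euclidean space (\cite[Theorem 2.8]{hochman2021}; here the ambient space is $\mathbb{C}\cong\mathbb{R}^{2}$, and it rests on the line case \cite[Theorem 2.7]{hochman2014self}). The plan is to prove it by contradiction, following Hochman's scheme: assume that for \emph{every} sequence $V_{0},\dots ,V_{n}\le\mathbb{C}$ the probability in \eqref{eq:hochman} is $\le 1-\eps$, and deduce $\frac1nH(\mu\ast\zeta,\mathcal{L}_{n})\ge\frac1nH(\mu,\mathcal{L}_{n})+\delta_{0}$ once $\delta_{0}$ is small enough (and then $n$ large). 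The first move is a multiscale reduction. Lemma \ref{lem:decomposition} expresses $\frac1nH(\mu,\mathcal{L}_{n})$ as the average over $0\le i<n$ (and over $x\sim\mu$) of $\frac1mH(\mu^{x,i},\mathcal{L}_{m})$, while the entropy of the convolution obeys a matching lower bound: $\frac1nH(\mu\ast\zeta,\mathcal{L}_{n})$ dominates — up to errors controlled by $1/m$ and $m/n$, by concavity of entropy (Lemmas \ref{lem:concave}, \ref{lem:affinetransform}) together with the standard fact that a component of $\mu\ast\zeta$ dominates in entropy a convolution of components of $\mu$ and of $\zeta$ (cf.\ \cite{hochman2014self,hochman2021}) — the average over $0\le i<n$ and over $x\sim\mu,y\sim\zeta$ of $\frac1mH(\mu^{x,i}\ast\zeta^{y,i},\mathcal{L}_{m})$. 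Since each local defect $\frac1mH(\mu^{x,i}\ast\zeta^{y,i},\mathcal{L}_{m})-\frac1mH(\mu^{x,i},\mathcal{L}_{m})$ is $\ge -O(1/m)$ (a convolution is an average of translates), the hypothesis and Markov's inequality give: for a $(1-\eps_{1})$-fraction of triples $(i,x,y)$ one has $\frac1mH(\mu^{x,i}\ast\zeta^{y,i},\mathcal{L}_{m})<\frac1mH(\mu^{x,i},\mathcal{L}_{m})+\rho$, where, given $\eps$ and $m$, the parameters $\eps_{1},\rho$ can be forced small by taking $\delta_{0}$ small and $n$ large (possibly at the cost of running the decomposition on a larger auxiliary window).

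The technical heart is a single-scale inverse statement: for each $\eps>0$ and each $m$ there is $\rho=\rho(\eps,m)>0$ such that if $\sigma,\tau\in\mathscr{P}([-R,R]^{2})$ and $\frac1mH(\sigma\ast\tau,\mathcal{L}_{m})<\frac1mH(\sigma,\mathcal{L}_{m})+\rho$, then there is a linear subspace $V\le\mathbb{C}$ with $\sigma$ being $(V,\eps,m)$-saturated and $\tau$ being $(V,\eps)$-concentrated. I would establish this by compactness plus an analysis of the equality case. If it failed for some $\eps$, take counterexamples $\sigma_{k},\tau_{k}$ at scales $m_{k}\to\infty$ whose convolution defect tends to $0$; after renormalising and passing to a subsequence one reaches a limiting configuration where the analogue of ``$H(\sigma\ast\tau)=H(\sigma)$'' holds. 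The rigidity one needs — that this forces $\tau$ onto a coset of a subspace $V$ and forces $\sigma$ to be $V$-invariant (hence saturated) — follows from submodularity of Shannon entropy, $H(A,B,C)+H(B)\le H(A,B)+H(B,C)$, together with Plünnecke–Ruzsa-type entropy inequalities applied to the convolution powers $\sigma\ast\tau^{\ast k}$: a vanishing defect at $k=1$ propagates to all $k$, forcing $\sigma$ to be approximately invariant under translations by $\mathrm{supp}(\tau)-\mathrm{supp}(\tau)$, and the approximate group so generated is, at the relevant resolution, a subspace. The genuinely two-dimensional content — identifying $V$ as a \emph{subspace} rather than a thicker approximate group, and making the choice coherent — is handled, as in \cite{hochman2021}, by passing to the one-dimensional projections $\pi_{\theta}$, invoking the line case \cite[Theorem 2.7]{hochman2014self} together with a projection (Marstrand-type) entropy estimate, and then disintegrating over $V$. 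Transporting the limiting conclusion back to finite scale $m_{k}$, with a loss bounded by $\eps$, contradicts the choice of $\sigma_{k},\tau_{k}$.

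Granting the single-scale statement, I would finish by bookkeeping. Apply it (with parameter $\eps$) to $\sigma=\mu^{x,i}$, $\tau=\zeta^{y,i}$ at each of the $(1-\eps_{1})$-many good triples, producing a subspace $V_{i}^{x,y}$. Two elementary observations let one replace this family, at each good scale $i$, by a single $V_{i}$: a measure that is $(V,\eps)$-concentrated along two transverse lines is already $(\{0\},O(\eps))$-concentrated (near a point), and a measure that is $(V,\eps,m)$-saturated along two transverse lines has near-maximal entropy, hence is $(\mathbb{C},O(\eps),m)$-saturated. Case-checking on whether the components $\zeta^{y,i}$ are typically near a point, typically concentrated on a common line, or typically spread then yields, for each good $i$, a subspace $V_{i}$ (one of $\{0\}$, a line, or $\mathbb{C}$) for which the pair condition of \eqref{eq:hochman} holds for a $(1-O(\sqrt{\eps_{1}}))$-fraction of $(x,y)$, using that the two conjuncts depend separately on $x$ and on $y$. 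On the $O(\eps_{1})$-fraction of remaining scales put $V_{i}=\{0\}$. Taking $\eps_{1}$ small relative to $\eps$ — arranged by the choice of $\delta_{0}$ and then $n$ — makes the probability in \eqref{eq:hochman} exceed $1-\eps$, a contradiction. The step I expect to be the main obstacle is the single-scale structure theorem in dimension two: proving the rigidity of near-minimal entropy growth and, above all, controlling the continuum of candidate subspaces $V$ uniformly in the scale $m$ — this is precisely the content that \cite{hochman2021} adds to the line case \cite{hochman2014self}, whereas the multiscale reduction and the globalisation are routine once it is available.
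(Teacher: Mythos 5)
You are proposing to reconstruct a result the paper does not prove at all: Theorem \ref{thm:hochman} is quoted verbatim from \cite[Theorem 2.8]{hochman2021} and used as a black box, so your sketch has to be measured against Hochman's memoir rather than anything in this paper. Measured that way, it has a genuine gap at its central step. Your plan reduces everything to a ``single-scale inverse statement'': if $\frac1mH(\sigma\ast\tau,\mathcal{L}_m)<\frac1mH(\sigma,\mathcal{L}_m)+\rho$ then some subspace $V$ makes $\sigma$ $(V,\eps,m)$-saturated and $\tau$ $(V,\eps)$-concentrated. For the multiscale bookkeeping to close you need $\rho=\rho(\eps)$ valid for all large $m$, and in that form the statement is false. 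Take $k=\lfloor m/2\rfloor$ and let $\sigma=\tau$ be the uniform measure on $\{(jb^{-k},0):0\le j<b^{k}\}$. Then $\frac1mH(\sigma,\mathcal{L}_m)=k/m\approx\frac12$, while $\sigma\ast\tau$ sits on fewer than $2b^{k}$ points spaced $b^{-k}$ apart, so $\frac1mH(\sigma\ast\tau,\mathcal{L}_m)\le\frac1m(k+\log_b2)=\frac1mH(\sigma,\mathcal{L}_m)+O(1/m)$, which is below any fixed $\rho(\eps)$ once $m$ is large. Yet for $\eps<1/4$ no subspace works: $V=\{0\}$ fails because $\tau$ is not concentrated near a point; $V=\mathbb{C}$ fails because $\frac1mH(\sigma,\mathcal{L}_m)<2-\eps$; $V=$ the horizontal axis fails saturation, since with $W=V^{\perp}$ one has $H(\pi_W\sigma,\mathcal{L}_m)=0$ and saturation would force $\frac1mH(\sigma,\mathcal{L}_m)\ge1-\eps$; and any other line either fails concentration of $\tau$ or, if it is within roughly $b^{-k}$ of horizontal, fails saturation by the same computation. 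This grid example is exactly the obstruction that forces both \cite{hochman2014self} and \cite{hochman2021} to formulate the inverse theorem multi-scale, as a statement about the components $\mu^{x,i},\zeta^{y,i}$ at typical levels $i$, rather than about the two measures at a single scale; no single-scale statement of your form can serve as the ``technical heart''.

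If instead you allow $\rho=\rho(\eps,m)$, as your compactness argument would produce, the reduction no longer feeds it: passing from $\frac1nH(\mu\ast\zeta,\mathcal{L}_n)<\frac1nH(\mu,\mathcal{L}_n)+\delta_0$ to ``for most $(i,x,y)$ the local defect at scale $m$ is $<\rho$'' incurs additive errors of order $1/m$ (and $m/n$) that cannot be removed by shrinking $\delta_0$ or enlarging $n$, whereas a compactness-produced $\rho(\eps,m)$ may decay faster than $1/m$. The rigidity you invoke in that compactness step is also not available: near-equality $H(\sigma\ast\tau)\approx H(\sigma)$ does not force $\tau$ onto a neighborhood of a coset of a linear subspace --- the near-extremizers include restricted-digit Cantor-type configurations whose associated approximate group of translations at scale $b^{-m}$ is a discretized grid, not a subspace, so the sentence ``the approximate group so generated is, at the relevant resolution, a subspace'' is precisely where the example above slips through. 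Finally, the genuinely new content of \cite{hochman2021} --- handling the continuum of candidate subspaces uniformly in the scale --- is deferred in your sketch to ``as in \cite{hochman2021}'', i.e.\ to the theorem being proved. As it stands the proposal is not a proof; for the purposes of this paper the correct move is the author's, namely to cite \cite[Theorem 2.8]{hochman2021}.
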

Note that the definitions of $\mu^{x,\,i},\,\zeta^{y,\,i}$ are from (\ref{def:measureupper}).
\subsection{The proof of Theorem \ref{thm:entropygrowth}}
We shall first introduce the following Lemma \ref{lem:projectionlow}, which is an important observation for proving Theorem \ref{thm:entropygrowth}.
\begin{lemma}\label{lem:projectionlow}
	Assume the condition (H) holds and $\Delta\in\mathbb{R}\setminus\mathbb{Q}.$
	Then for any $\delta,\,\eps>0$, $m\ge M_2(\eps,\delta),$ and $k\ge K_1(\eps,\delta,m)$, the following holds:
	\begin{equation}
	\inf_{x,\,\theta\in[0,1]}\mathbb{P}^{\,m_x}_{i=k}\bigg(\,\frac1mH\big(\pi_{\theta}(m_x)^{z,\,i},\mathcal{L}_m \big)\ge\beta-\eps\,\bigg)\ge1-\delta.
	\end{equation}
\end{lemma}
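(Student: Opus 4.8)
The plan is to derive this from Lemma \ref{lem:entropyprojection}, which gives that $\frac1n H(\pi_\theta m_x,\mathcal{L}_n)$ converges to $\beta$ as $n\to\infty$ \emph{uniformly} in $x,\theta\in[0,1]$. The key point is that Lemma \ref{lem:entropyprojection} is a statement about the \emph{global} entropy of $\pi_\theta m_x$ at scale $n$, whereas the desired conclusion is a statement about the \emph{components} $(m_x)^{z,i}$ and their projections. The bridge between the two is the component-decomposition of entropy, Lemma \ref{lem:decomposition}: for $m\le n$,
$$\frac1n H(\pi_\theta m_x,\mathcal{L}_n)=\mathbb{E}^{\,\pi_\theta m_x}_{0\le i< n}\Big(\tfrac1m H\big((\pi_\theta m_x)^{z,i},\mathcal{L}_m\big)\Big)+O\big(\tfrac mn+\tfrac{\log R^\phi_\gamma}n\big).$$
First I would relate the components of the projection to the projections of the components: the partition $\mathcal{L}^{\mathbb R}_i$ pulled back by $\pi_\theta$ is, up to bounded overlap, a coarsening of $\pi_\theta(\mathcal{L}^{\mathbb C}_i)$, so that $(\pi_\theta m_x)^{z,i}$ and $\pi_\theta\big((m_x)^{w,i}\big)$ (for appropriate $w$) agree up to an affine normalization and a bounded error in entropy; concretely one uses Lemma \ref{lem:affinetransform} and the observation (as in the proof of Corollary \ref{cor:relation}) that each atom of $\mathcal{L}^{\mathbb C}_i$ sits inside $O(1)$ atoms of $\pi_\theta^{-1}(\mathcal{L}^{\mathbb R}_i)$ and vice versa. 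This lets me rewrite the right-hand side above, up to $O(1/m)$, as $\mathbb{E}^{\,m_x}_{0\le i<n}\big(\tfrac1m H(\pi_\theta (m_x)^{z,i},\mathcal{L}_m)\big)$.

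Next I would run the standard "average is near the max forces most terms near the max" argument. Fix $\eps,\delta>0$. Each term $\tfrac1m H(\pi_\theta(m_x)^{z,i},\mathcal{L}_m)$ is bounded above by $1+O(1/m)$ (entropy of a probability measure on $[0,1)$ at scale $m$ in base $b$, up to the affine normalization error). Choose $m$ large enough that $\beta\le \int f_\ell\,ds$-type bounds and the $O(1/m)$ errors are all $<\eps\delta/100$; then, using uniform convergence from Lemma \ref{lem:entropyprojection}, choose $k=n$ large enough (depending on $\eps,\delta,m$) that for \emph{all} $x,\theta$,
$$\mathbb{E}^{\,m_x}_{0\le i<n}\Big(\tfrac1m H(\pi_\theta(m_x)^{z,i},\mathcal{L}_m)\Big)\ \ge\ \beta-\tfrac{\eps\delta}{4}.$$
Since the integrand is $\le 1$ and, more importantly, $\le \beta-\eps$ on the bad event plus $1+o(1)$ on the good event, Markov's inequality applied to $1+o(1)-\tfrac1m H(\pi_\theta(m_x)^{z,i},\mathcal{L}_m)$ yields that the bad event has $m_x$-probability at most $\delta$, uniformly in $x,\theta$. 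Re-indexing the average over $0\le i<n$ to the single scale $i=k$ (which is what the statement asks; one replaces the Cesàro average by a single large scale using that the bound holds for a positive-density set of scales, or simply restates Lemma \ref{lem:entropyprojection} at the single scale $k$ via Lemma \ref{lem:decomposition} with $n$ a large multiple of $k$) completes the proof.

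The main obstacle I anticipate is the first step: carefully controlling the discrepancy between $(\pi_\theta m_x)^{z,i}$ and $\pi_\theta\big((m_x)^{w,i}\big)$ and checking that the bounded-overlap and affine-normalization errors really are $O(1)$ in entropy \emph{uniformly in $\theta$} — the constant in Lemma \ref{lem:affinetransform} must not degenerate as $\theta$ varies, but since $\pi_\theta$ is a norm-one projection this is fine, and the combinatorial overlap bound is genuinely uniform in $\theta$ because $\pi_\theta^{-1}(\mathcal{L}^{\mathbb R}_i)$ refined against the coordinate grid produces tiles of uniformly bounded eccentricity. Once that uniformity is nailed down, the rest is the routine Markov-inequality bookkeeping and an application of Lemma \ref{lem:entropyprojection}.
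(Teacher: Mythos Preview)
Your proposed Markov-inequality step contains a genuine gap that cannot be repaired within the framework you outline. You want to deduce from
\[
\mathbb{E}^{\,m_x}_{0\le i<n}\Big(\tfrac1m H\big(\pi_\theta (m_x)^{z,i},\mathcal{L}_m\big)\Big)\ \ge\ \beta-\tfrac{\eps\delta}{4}
\]
that the event $\big\{\tfrac1m H(\pi_\theta (m_x)^{z,i},\mathcal{L}_m)<\beta-\eps\big\}$ has probability $\le\delta$. But the only a priori upper bound on the integrand is $1+O(1/m)$, not $\beta+o(1)$. Applying Markov to $Y=1-\tfrac1m H(\cdots)$ gives
\[
\mathbb{P}\big(Y>1-\beta+\eps\big)\ \le\ \frac{1-\beta+\eps\delta/4}{1-\beta+\eps},
\]
which is close to $1$ (not to $0$) whenever $\beta$ is bounded away from $1$. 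Concretely, if $\beta=\tfrac12$ the component entropies could take the values $0$ and $1$ each with probability $\tfrac12$; the average is exactly $\beta$, yet half the components fail the lower bound. So an average-to-pointwise argument of this type cannot work without an additional \emph{upper} bound $\le\beta+o(1)$ on individual component entropies, which is itself a statement of the same strength as the lemma and is nowhere supplied. Your re-indexing from a Ces\`aro average to the single scale $i=k$ is a separate loose end, but the Markov issue is already fatal.

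The paper's proof avoids averaging altogether and instead exploits the self-conformal structure of $m_x$. One writes $m_x=b^{-(\hat k+\ell)}\sum_{\mathbf w\in\varLambda^{\hat k+\ell}}T^{\hat k+\ell}m_{\mathbf w(x)}$; each summand has support of diameter $O(|\gamma|^{\hat k+\ell})\ll b^{-k}$, so apart from a set of words $\mathbf w$ of density $<\delta_1$ (those whose piece meets $\partial\mathcal{L}_k$, controlled via Corollary~\ref{cor:uppermeasurebound} and the non-atomicity Lemma~\ref{lemNonatomic}) each piece lies in a single $\mathcal{L}_k^{\mathbb C}$-cell. Calling $\eta'$ the normalized sum over the ``good'' words, one has $m_x=(1-O(\delta_1))\eta'+O(\delta_1)\eta''$, so by Lemma~\ref{lem:hochman1} the components $(m_x)_{z,k}$ and $\eta'_{z,k}$ are close in total variation for most $z$, hence close in entropy by Lemma~\ref{lem:hochman2}. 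Finally, $\eta'_I$ for each cell $I$ is a \emph{convex combination} of measures $T^{\hat k+\ell}m_{\mathbf w(x)}$, whose projections are, after rescaling, of the form $\pi_{\theta-(\hat k+\ell)\Delta}m_{\mathbf w(x)}$; Lemma~\ref{lem:entropyprojection} gives each of these entropy $\ge\beta-\eps/3$, and concavity transfers this to $\eta'_I$. This yields the lower bound $\beta-\eps$ directly for every good component at the fixed scale $k$, with no averaging and no Markov step.
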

Before giving the proof, we shall introduce the following results \cite[Lemma 3.3]{barany2019hausdorff} and \cite[Lemma 3.4]{hochman2021}.
 Recall that the total variation distance between $\mu,\eta\in\mathscr{P}(\mathbb{C})$
is 
$$\parallel\mu-\eta\parallel=\sup_{A\in\mathscr{B}(\mathbb{C})}|\,\mu(A)-\eta(A)|.$$
\begin{lemma}\label{lem:hochman1}
	For every $\eps>0$ there exists a $\delta_1=\delta_1(\eps)>0$ with the following property. For any $0<\delta\le\delta_1,$
	suppose that a probability  measure $\eta\in\mathscr{P}(\mathbb{C})$ can be written as a convex combination $\eta=(1-\delta)\eta'+\delta\eta''$. Then for every $k$, we have
	$$\mathbb{P}^{\,\eta}_{i=k}\big(z\,:\,\parallel\eta_{z,\,i}-\eta'_{z,\,i}\parallel<\eps \big)>1-\eps.$$
\end{lemma}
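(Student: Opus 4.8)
The plan is to reduce everything to a cell-by-cell comparison on the level-$k$ partition $\mathcal{L}_k$, followed by a Markov-type estimate. We may assume $0<\eps<1$, since for $\eps\ge 1$ the conclusion is immediate: any two probability measures on $\mathbb{C}$ are at total variation distance at most $1$.

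First I would fix a cell $I\in\mathcal{L}_k$ with $\eta(I)>0$. If $\eta'(I)=0$, then $\eta(I)=\delta\,\eta''(I)$; I would simply record that the cells of this type carry total $\eta$-mass at most $\delta$, and since on them $\eta'_{z,k}$ has no canonical meaning, they are discarded wholesale. If $\eta'(I)>0$, then restricting the identity $\eta=(1-\delta)\eta'+\delta\eta''$ to $I$ and renormalising gives the convex decomposition
$$\eta_{I}=\lambda_I\,\eta'_I+(1-\lambda_I)\,\eta''_I,\qquad 1-\lambda_I=\frac{\delta\,\eta''(I)}{\eta(I)}$$
(the second summand being absent when $\eta''(I)=0$), whence $\eta_I-\eta'_I=(1-\lambda_I)(\eta''_I-\eta'_I)$ and therefore $\|\eta_I-\eta'_I\|\le 1-\lambda_I=\delta\,\eta''(I)/\eta(I)$, using that total variation distance between probability measures never exceeds $1$.

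Next I would run the Markov-type argument. The bad set $B=\{z:\|\eta_{z,k}-\eta'_{z,k}\|\ge\eps\}$ is a union of cells of $\mathcal{L}_k$; a bad cell with $\eta'(I)>0$ satisfies $\eps\le\delta\,\eta''(I)/\eta(I)$, i.e. $\eta(I)\le(\delta/\eps)\,\eta''(I)$, so summing over such cells bounds their $\eta$-mass by $(\delta/\eps)\sum_I\eta''(I)=\delta/\eps$. Adding the contribution $\le\delta$ from the cells not charged by $\eta'$, I would obtain $\mathbb{P}^{\,\eta}_{i=k}(B)=\eta(B)\le\delta+\delta/\eps\le 2\delta/\eps$. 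Choosing $\delta_1(\eps)=\eps^2/3$, then for every $0<\delta\le\delta_1$ this gives $\mathbb{P}^{\,\eta}_{i=k}(B)\le 2\eps/3<\eps$, hence $\mathbb{P}^{\,\eta}_{i=k}(z:\|\eta_{z,k}-\eta'_{z,k}\|<\eps)=1-\eta(B)>1-\eps$, uniformly in $k$ (here one uses that $z\mapsto\|\eta_{z,k}-\eta'_{z,k}\|$ is constant on cells of $\mathcal{L}_k$, so $\mathbb{P}^{\,\eta}_{i=k}$ of this event is just its $\eta$-measure).

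I do not expect a genuine obstacle here; the whole argument is elementary. The only mild subtlety is the bookkeeping for the cells on which $\eta'$ vanishes, which is what forces the threshold $\delta_1$ to depend on $\eps$ slightly more strongly (like $\eps^2$) than a first guess might suggest.
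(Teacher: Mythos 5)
Your proof is correct: the cell-by-cell decomposition $\eta_I=\lambda_I\eta'_I+(1-\lambda_I)\eta''_I$ with $1-\lambda_I=\delta\,\eta''(I)/\eta(I)$, the bound $\|\eta_I-\eta'_I\|\le 1-\lambda_I$, and the Markov estimate $\sum_{\text{bad }I}\eta(I)\le\delta+\delta/\eps$ give exactly the claimed conclusion with $\delta_1(\eps)=\eps^2/3$, uniformly in $k$. The paper itself offers no proof but defers to Lemma 3.3 of B\'ar\'any--Hochman--Rapaport, remarking that the proof works in $\mathbb{R}^d$; your argument is essentially that same standard Markov-type estimate, written out self-containedly, so there is nothing to object to beyond the cosmetic point that your dismissal of $\eps\ge1$ via ``total variation at most $1$'' is only literally strict for $\eps>1$ (the case $\eps=1$ is anyway covered by your main computation).
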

In fact  \cite[Lemma 3.3]{barany2019hausdorff} only dealt with the case $\eta\in\mathscr{P}(\mathbb{R})$, however the proof  also works for all $\eta\in\mathscr{P}(\mathbb{R}^d)$.
\begin{lemma}\label{lem:hochman2}
	If $\mathcal{A}$ is the partition of $\mathbb{R}^d$, and if $\mu,\eta\in\mathscr{P}(\mathbb{R}^d)$ are supported at most $k$ atoms of partition $\mathcal{A}$  and $\parallel\mu-\eta\parallel<\eps,$ then
	$$|H(\mu,\,\mathcal{A})-H(\eta,\,\mathcal{A})|<2\eps\log_b k+2H(\frac12\eps)$$
	where  $H(\frac12\eps)$ is defined by (\ref{def:H}).
\end{lemma}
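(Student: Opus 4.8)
The plan is to reduce the statement to an elementary estimate on the probability vectors $p_A:=\mu(A)$ and $q_A:=\eta(A)$, $A\in\mathcal{A}$, since $H(\mu,\mathcal{A})=\sum_{A}f_b(p_A)$ and $H(\eta,\mathcal{A})=\sum_A f_b(q_A)$ depend only on these. First I would record that $\|\mu-\eta\|<\eps$ forces $\delta:=\tfrac12\sum_{A\in\mathcal{A}}|p_A-q_A|<\eps$: taking $A_0:=\bigcup\{A\in\mathcal{A}:p_A>q_A\}$ one has $\mu(A_0)-\eta(A_0)=\delta$, so $\delta\le\|\mu-\eta\|$; also $\delta\in[0,1]$. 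The degenerate cases are dispatched at once: if $\delta=0$ then $\mu=\eta$ and there is nothing to prove; if $\delta=1$ then $\mu,\eta$ have disjoint supports, which forces $\eps>1$, and the inequality is trivial because both entropies lie in $[0,\log_b k]$. So assume $\delta\in(0,1)$.

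The key structural step is to split the two vectors into a shared part and two ``excess'' parts. Set $\rho_A:=\min(p_A,q_A)$, $p^+_A:=(p_A-q_A)^+$, $q^+_A:=(q_A-p_A)^+$, so that, as nonnegative vectors, $p=\rho+p^+$ and $q=\rho+q^+$, where $\rho$ has total mass $1-\delta$ and $p^+,q^+$ each have total mass $\delta$. Normalizing, one gets $\mu=(1-\delta)\bar{\rho}+\delta\,\bar{p}^+$ and $\eta=(1-\delta)\bar{\rho}+\delta\,\bar{q}^+$ with $\bar{\rho},\bar{p}^+,\bar{q}^+\in\mathscr{P}(\mathbb{R}^d)$, where $\bar{p}^+$ is supported on a subset of $\text{supp}(\mu)$ and $\bar{q}^+$ on a subset of $\text{supp}(\eta)$, hence each meets at most $k$ atoms of $\mathcal{A}$.

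Now I would apply Lemma \ref{lem:concave} with the trivial partition $\mathcal{Q}=\{\emptyset,\mathbb{R}^d\}$ and $t=1-\delta$. Concavity gives $H(\mu,\mathcal{A})\ge(1-\delta)H(\bar{\rho},\mathcal{A})+\delta H(\bar{p}^+,\mathcal{A})$, and convexity gives $H(\eta,\mathcal{A})\le(1-\delta)H(\bar{\rho},\mathcal{A})+\delta H(\bar{q}^+,\mathcal{A})+H(1-\delta)$, where $H(1-\delta)=H(\delta)$ by the symmetry $H(t)=f_b(t)+f_b(1-t)$. Subtracting, the shared term $(1-\delta)H(\bar{\rho},\mathcal{A})$ cancels; using $H(\bar{p}^+,\mathcal{A})\ge 0$ and $H(\bar{q}^+,\mathcal{A})\le\log_b k$ (a measure supported on at most $k$ atoms has entropy at most $\log_b k$), this yields $H(\mu,\mathcal{A})-H(\eta,\mathcal{A})\ge-\delta\log_b k-H(\delta)$. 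Exchanging the roles of $\mu$ and $\eta$ gives the reverse bound, so $|H(\mu,\mathcal{A})-H(\eta,\mathcal{A})|\le\delta\log_b k+H(\delta)$. Finally I would convert this to the stated form using $\delta<\eps$ (so $\delta\log_b k<2\eps\log_b k$) and the concavity of $H$ together with $H(0)=0$, which gives $H(\delta)\le H(\eps)\le 2H(\tfrac12\eps)$.

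The only point requiring care — and the reason the bound carries $\eps\log_b k$ rather than the useless $\log_b k$ — is that the large combinatorial factor $\log_b k$ must multiply only the small excess mass $\delta$; this is exactly what the decomposition $\mu=(1-\delta)\bar{\rho}+\delta\bar{p}^+$, $\eta=(1-\delta)\bar{\rho}+\delta\bar{q}^+$ achieves, since the bulk $(1-\delta)H(\bar{\rho},\mathcal{A})$ cancels in the subtraction and never has to be controlled. Everything else — the identification of $\delta$ with a lower bound for the total variation, the symmetry $H(1-\delta)=H(\delta)$, and the two-line passage to the final constants — is routine bookkeeping, as is the handling of the degenerate values $\delta\in\{0,1\}$.
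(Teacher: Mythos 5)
The paper does not prove this lemma at all: it is quoted verbatim from \cite[Lemma 3.4]{hochman2021}, so there is no in-paper argument to compare with. Your decomposition proof is the standard one and is essentially correct: writing the two atom-vectors as a common part of mass $1-\delta$ plus excess parts of mass $\delta=\frac12\sum_{A}|\mu(A)-\eta(A)|\le\|\mu-\eta\|<\eps$, and playing concavity against almost-convexity from Lemma \ref{lem:concave} so that the bulk term cancels, is exactly what makes $\log_b k$ appear only against the small mass $\delta$. Two small points deserve care. First, the identity $\mu=(1-\delta)\bar{\rho}+\delta\bar{p}^{+}$ should either be read at the level of the probability vectors $(\mu(A))_{A\in\mathcal{A}}$ (which is all that $H(\cdot,\mathcal{A})$ sees, and all that the concavity/convexity inequalities require), or the components should be constructed inside each atom proportionally to $\mu$ resp.\ $\eta$; as literally written, a single measure $\bar{\rho}$ need not serve both decompositions. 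Second, your last step ``$H(\delta)\le H(\eps)$'' is not valid in general, since $H$ defined by (\ref{def:H}) increases only on $[0,\tfrac12]$ (take $\delta=\tfrac12$, $\eps=0.8$). This is easily repaired: if $\eps\le 1$, concavity with $H(0)=0$ gives $H(\delta)\le 2H(\delta/2)$, and $\delta/2<\eps/2\le\tfrac12$ with monotonicity on $[0,\tfrac12]$ yields $H(\delta)\le 2H(\eps/2)$; if $\eps>1$ the asserted inequality is immediate, since $|H(\mu,\mathcal{A})-H(\eta,\mathcal{A})|\le\log_b k<2\eps\log_b k$ for $k\ge2$ and both entropies vanish when $k=1$. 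With these repairs your argument is a complete, self-contained proof of the cited lemma in the case needed here.
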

{\bf Notation.}  For any $n\in\mathbb{N}$, let $\hat{n}$ be the unique integer such that
\begin{equation}\label{eq:hat}
|\gamma|^{\hat{n}}\le b^{-n} <|\gamma|^{\hat{n}-1}
\end{equation}
as \cite{Ren2022} did.
We also need the following corollary of Lemma \ref{lemNonatomic}.
\begin{corollary}\label{cor:uppermeasurebound}
     If  the condition (H) holds, then for any $\eps>0$, there is a $\delta_2=\delta_2(\eps)>0$ such that the following holds. For any $x\in[0,1]$ and $n\in\mathbb{N}$, we have
     \begin{equation}\label{eq:uppermeasurebound}
     m_x\big(\partial\mathcal{L}_n^{(\delta_2/b^{n})}\big)<\eps
     \end{equation}
     where set $\partial\mathcal{L}_n:=\bigcup_{k_1,k_2\in\mathbb{Z}}\bigg\{z\in\mathbb{C}:\,\text{Re}(z)=\frac{k_1}{b^n}\,\text{or}\,\text{Im}(z)=\frac{k_2}{b^n}\bigg\}.$
\end{corollary}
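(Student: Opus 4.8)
The plan is to deduce the estimate from the non-atomicity of the projected measures $\pi_\psi m_y$ (Lemma \ref{lemNonatomic}) together with the self-similar structure of $m_x$, the latter being used to transfer a scale-$1$ bound down to scale $b^{-n}$; the standing hypothesis $\Delta\notin\mathbb{Q}$ is in force throughout. First I would pass to a one-dimensional problem. Since $\mathrm{Re}(z)=\pi_0(z)$ and $\mathrm{Im}(z)=\pi_{1/4}(z)$, a point within distance $\delta_2/b^n$ of $\partial\mathcal{L}_n$ has its $\pi_0$-image or its $\pi_{1/4}$-image within $\delta_2/b^n$ of $\tfrac1{b^n}\mathbb{Z}$, so, putting $E_n:=\{t\in\mathbb{R}:\,\mathrm{dist}(t,\tfrac1{b^n}\mathbb{Z})<\delta_2/b^n\}$,
$$m_x\big(\partial\mathcal{L}_n^{(\delta_2/b^n)}\big)\le\pi_0 m_x(E_n)+\pi_{1/4}m_x(E_n);$$
it thus suffices to bound $\pi_\theta m_x(E_n)$ by $\eps/2$, uniformly in $x\in[0,1]$ and $n\in\mathbb{N}$, for $\theta\in\{0,\tfrac14\}$.

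Next I would upgrade Lemma \ref{lemNonatomic} to a uniform non-atomicity statement: for every $\eps'>0$ there is $\rho=\rho(\eps')>0$ such that $\pi_\psi m_y(I)<\eps'$ for all $\psi,y\in[0,1]$ and every interval $I\subset\mathbb{R}$ with $|I|\le\rho$. Were this false, then from weak-$*$ compactness of the family $\{\pi_\psi m_y:\psi,y\in[0,1]\}$ (a consequence of the continuity of $(\psi,y,\textbf{j})\mapsto\pi_\psi\circ S_y(\textbf{j})$) and passing to subsequences one obtains $\psi_k\to\psi$, $y_k\to y$ and closed intervals $\overline{I_k}$ collapsing to a single point $p$ with $\pi_{\psi_k}m_{y_k}(\overline{I_k})\ge\eps'$; the portmanteau theorem applied to closed sets then forces $\pi_\psi m_y(\{p\})\ge\eps'$, contradicting Lemma \ref{lemNonatomic}.

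Then I would run the self-similarity. With $\hat n$ as in (\ref{eq:hat}) one has $|\gamma|\,b^{-n}<|\gamma|^{\hat n}\le b^{-n}$, and combining (\ref{FundementalFormular}) with (\ref{eq:projectionmeasure}) gives, for $n\ge1$,
$$\pi_\theta m_x=\frac1{b^{\hat n}}\sum_{\textbf{w}\in\varLambda^{\hat n}}g_{\textbf{w},\theta}\big(\pi_{\theta-\hat n\Delta}m_{\textbf{w}(x)}\big),\qquad g_{\textbf{w},\theta}(z)=|\gamma|^{\hat n}z+\pi_\theta\circ S_x(\textbf{w}).$$
For each fixed $\textbf{w}$, the preimage $g_{\textbf{w},\theta}^{-1}(E_n)$ is a union of intervals of length $2\delta_2/(b^n|\gamma|^{\hat n})\le2\delta_2/|\gamma|$ whose consecutive centres are at distance $(1/b^n)/|\gamma|^{\hat n}\ge1$; provided $\delta_2\le|\gamma|/2$ and since $\pi_{\theta-\hat n\Delta}m_{\textbf{w}(x)}$ is supported in $\mathbf{B}(0,R^\phi_\gamma/2)$, at most $N:=\lceil R^\phi_\gamma\rceil+3$ of these intervals meet its support. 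Applying the uniform bound of the previous step with $\rho=2\delta_2/|\gamma|$ yields $g_{\textbf{w},\theta}(\pi_{\theta-\hat n\Delta}m_{\textbf{w}(x)})(E_n)\le N\eps'$, hence $\pi_\theta m_x(E_n)\le N\eps'$ and $m_x(\partial\mathcal{L}_n^{(\delta_2/b^n)})\le2N\eps'$. It remains to set $\eps':=\eps/(2N)$, take $\rho=\rho(\eps')$ from the previous step, and put $\delta_2:=\min\{|\gamma|/2,\,|\gamma|\rho/2\}$; the case $n=0$ follows directly from the boundedness of the supports of $\pi_0 m_x,\pi_{1/4}m_x$ by the same interval count.

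The substantive point — more a key observation than a genuine obstacle — is the second step: promoting the qualitative non-atomicity of Lemma \ref{lemNonatomic} to a bound uniform over the compact parameter space $\{(\psi,y)\}$ and over all interval positions. Once that is in hand, the self-similarity of the third step automatically upgrades it to the asserted estimate at the fine scale $b^{-n}$, and everything else is bookkeeping with the scales $\hat n$ and the uniformly bounded supports.
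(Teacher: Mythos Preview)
Your proof is correct and follows essentially the same strategy as the paper: promote Lemma \ref{lemNonatomic} to a uniform non-atomicity bound via weak-$*$ compactness, then use the self-similar decomposition of $m_x$ to rescale the grid $\partial\mathcal{L}_n$ back to unit scale, at which point only boundedly many strips meet the support and the uniform bound (necessarily over \emph{all} projection angles, since the rescaling rotates) applies. The only organizational difference is that you project to $\pi_0,\pi_{1/4}$ first and iterate at depth $\hat n$, counting $O(R^\phi_\gamma)$ intervals, whereas the paper iterates directly in $\mathbb{C}$ at depth $\hat n+\ell$ with $\ell$ chosen so that $1/(b^n|\gamma|^{\hat n+\ell})>R^\phi_\gamma$, thereby reducing to exactly one horizontal and one vertical line per $\textbf{w}$; both routes arrive at the same uniform bound $\sup_{y,\psi,z}\pi_\psi m_y(\mathbf{B}(z,\delta'))$.
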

\begin{proof}
	By Lemma \ref{lemNonatomic} there is $\delta'>0$ such that 
	\begin{equation}\label{eq:upperbound}
	\sup_{x,\,\theta\in[0,1],\,z\in\mathbb{R}}\pi_{\theta}m_x\big(\mathbf{B}(z,\delta')\big)\le\frac{\eps}2,
	\end{equation}
	since the probability measures $\pi_{\theta}m_x$ are compact in the weak star topology and the functions $\pi_{\theta}\circ S_x(\textbf{j})$ are continuous.
	Recall $R^{\phi}_{\gamma}=\frac{2\parallel\phi\parallel_{\infty}}{1-|\gamma|}.$ There is constant $\ell\in\mathbb{N}$ and $c_{\ell}>0$ such that
	\begin{equation}\label{eq:choose}
	c_{\ell}>\frac1{b^n|\gamma|^{\hat{n}+\ell}}>R^{\phi}_{\gamma}\quad\quad\forall n\in\mathbb{N}.
	\end{equation}
	For any set $A\subset\mathbb{C}$ and $z_1,\,z_2\in\mathbb{C}$, denote the set
	$$z_1A+z_2=\big\{\,z_1\cdot z+z_2\,:\,z\in A\,\}.$$
	Let $\delta_2=\delta'/c_{\ell}$.  For any $n\in\mathbb{N}$, combining (\ref{FundementalFormular}) with (\ref{T^nm_x}) we have
	\begin{equation}\label{eq:setboundary}
	m_x\big(\partial\mathcal{L}_n^{(\delta_2/b^{n})}\big)=\frac1{b^{\hat{n}+\ell} }\sum_{\textbf{w}\in\varLambda^{\hat{n}+\ell}}m_{\textbf{w}(x) }\bigg(\frac{\partial\mathcal{L}_n^{(\delta_2/b^{n})}-S(x,\textbf{w})}{\gamma^{\hat{n}+\ell}}\bigg).
	\end{equation}
	Also the definition of $\partial\mathcal{L}_n$ and (\ref{eq:choose}) imply that: for each $\textbf{w}\in\varLambda^{\hat{n}+\ell}$ such that
	$$m_{\textbf{w}(x) }\bigg(\frac{\partial\mathcal{L}_n^{(\delta_2/b^{n})}-S(x,\textbf{w})}{\gamma^{\hat{n}+\ell}}\bigg)>0,$$	 there are $k_1^{\textbf{w}},\,k_2^{\textbf{w}}\in\mathbb{Z}$ such that
	$$\bigg(\frac{\partial\mathcal{L}_n-S(x,\textbf{w})}{\gamma^{\hat{n}+\ell}}\bigg)\bigcap\text{supp}\big(m_{\textbf{w}(x)}\big)\subset \frac{\big\{z\in\mathbb{C}:\,\text{Re}(z)=
		\frac{k_1^{\textbf{w}}}{b^n}\,\text{or}\,\text{Im}(z)=\frac{k_2^{\textbf{w}}}{b^n}\big\}-S(x,\textbf{w})}{\gamma^{\hat{n}+\ell}},$$
	which implies
	$$\bigg(\frac{\partial\mathcal{L}_n^{(\delta_2/b^{n})}-S(x,\textbf{w})}{\gamma^{\hat{n}+\ell}}\bigg)\bigcap\text{supp}\big(m_{\textbf{w}(x)}\big)\subset\pi_{\theta_{\textbf{w}}}^{-1}\bigg(\mathbf{B}(z_1^{\textbf{w}},\frac{\delta_2}{b^n|\gamma|^{\hat{n}+\ell}})\bigg)\bigcup\pi_{\theta'_{\textbf{w}}}^{-1}\bigg(\mathbf{B}(z_2^{\textbf{w}},\frac{\delta_2}{b^n|\gamma|^{\hat{n}+\ell}})\bigg)$$
	for some $\theta_{\textbf{w}},\,\theta'_{\textbf{w}}\in[0,1]$ and $z_1^{\textbf{w}},\,z_2^{\textbf{w}}\in\mathbb{R}.$ Therefore we have
	$$m_x\big(\partial\mathcal{L}_n^{(\delta_2/b^{n})}\big)<\eps$$
	by (\ref{eq:upperbound}), (\ref{eq:choose}) and (\ref{eq:setboundary}), $\delta_2=\delta'/c_{\ell}$.
\end{proof}

\begin{proof}[The proof of Lemma \ref{lem:projectionlow}]
For any $\delta,\,\eps>0$, choose $\delta_1=\delta_1\big( \min\{\delta,\,\eps/8\}\big)\in(0,1)$ by Lemma \ref{lem:hochman1}. Thus there is a $\delta_2=\delta_2(\delta_1)>0$	such that (\ref{eq:uppermeasurebound}) holds by Corollary \ref{cor:uppermeasurebound}. Let $\ell\in\mathbb{N}$ be a constant such that 
\begin{equation}\label{eq:uppercountrol}
2R^{\phi}_{\gamma}|\gamma|^{\hat{n}+\ell}\le\frac{\delta_2}{b^n}\quad\quad\forall n\in\mathbb{N}.
\end{equation}
By Lemma \ref{lem:entropyprojection} there is a number $M_1=M_1(\eps)\in\mathbb{N}$ such that, for all $m\ge M_1$ we have
\begin{equation}
\inf_{x,\,\theta\in[0,1] }\frac1mH(\pi_{\theta}m_x,\,\mathcal{L}_m)\ge\beta-\eps/3.
\end{equation}

For any $x,\,\theta\in[0,1]$ and  $k\ge M_1$, let $A_{k,\,\ell}\subset\varLambda^{\hat{k}+\ell}$ be the union of all elements $\textbf{w}\in\varLambda^{\hat{k}+\ell}$ such that $\text{supp}(T^{\hat{k}+\ell}m_{\textbf{w}(x)})\subset I_{\textbf{w}}$ for some $I_{\textbf{w}}\in\mathcal{L}_{k}^{\mathbb{C}}$. Let $A^c_{k,\,\ell}:=\varLambda^{\hat{k}+\ell}\setminus A_{k,\,\ell}.$ Thus for each $\textbf{w}\in A^c_{k,\,\ell}$
we have $\text{supp}(T^{\hat{k}+\ell}m_{\textbf{w}(x)})\subset \partial\mathcal{L}_k^{(\delta_2/b^{k})}$ by (\ref{eq:uppercountrol}), therefore we have
	\begin{equation}
	\frac{\#A^c_{k,\,\ell}}{b^{\hat{k}+\ell}}\le\delta_1
	\end{equation}
by Corollary \ref{cor:uppermeasurebound}. Note that $\delta_1<1$ implies $A_k\neq\emptyset $.
 Define $$\eta':=\frac1{\#A_{k,\,\ell}}\sum_{\textbf{w}\in A_{k,\,\ell}}T^{\hat{k}+\ell}m_{\textbf{w}(x)}$$ and $\eta''\in\mathscr{P}(\mathbb{C})$ such that
	$$m_x=\bigg(1-\frac{\#A^c_{k,\,\ell}}{b^{\hat{k}+\ell}}\bigg)\,\eta'+\bigg(\frac{\#A^c_{k,\,\ell}}{b^{\hat{k}+\ell}}\bigg)\, \eta''.$$
Combining this with Lemma \ref{lem:hochman1}, we have
$$\mathbb{P}_{i=k}^{m_x}\bigg(z\,:\,\parallel(m_x)_{z,\,i}-\eta'_{z,\,i}\parallel<\eps/8 \bigg)>1-\delta.$$
Thus Lemma \ref{lem:hochman2} implies that the following holds
\begin{equation}\label{eq:probability}
\mathbb{P}_{i=k}^{m_x}\bigg(z\,:\,\big|\frac1mH\big((m_x)_{z,\,i},\pi_{\theta}^{-1}(\mathcal{L}_{i+m})\big)-\frac1mH\big(\eta'_{z,\,i},\pi_{\theta}^{-1}(\mathcal{L}_{i+m})\big)  \big|\le\frac14\eps+\frac{2H(\eps/8)}m\,  \bigg)>1-\delta.
\end{equation}
For any $I\in\mathcal{L}_{k}$, define the set $B_{I,\,k}:=\big\{\,\textbf{w}\in A_{k,\,\ell}:\,\text{supp}(T^{\hat{k}+\ell}m_{\textbf{w}(x)})\subset I\,\big\}$. For the case $B_{I,k}\ne\emptyset$, we have 
\begin{equation}\label{eq:low}
\begin{aligned}
\frac1mH\big(\eta'_I,\pi_{\theta}^{-1}(\mathcal{L}_{k+m})\big)&\ge\frac1{\#B_{I,\,k}}\sum_{\textbf{w}\in B_{I,\,k}}\frac1mH\big(T^{\hat{k}+\ell}m_{\textbf{w}(x)} ,\pi_{\theta}^{-1}(\mathcal{L}_{k+m})\big)
\\&=\frac1{\#B_{I,\,k}}\sum_{\textbf{w}\in B_{I,\,k}}\frac1mH\big(\pi_{\theta-(\hat{k}+\ell)\Delta}m_{\textbf{w}(x)} ,\mathcal{L}_{m}\big)+O(\frac{\ell}m)
\\&\ge\beta-\eps/3+O(\frac{\ell}m)
\end{aligned}
\end{equation}
by the concavity of conditional entropy and (\ref{T^nm_x}), (\ref{eq:uppercountrol}). Also we have
\begin{equation}\label{eq:eq}
\frac1mH\big((m_x)_{z,\,k},\pi_{\theta}^{-1}(\mathcal{L}_{k+m})\big)=\frac1mH\big(\pi_{\theta}(m_x)^{z,\,k},\mathcal{L}_{m}\big)+O(\frac1m).
\end{equation}
Let $M_3\in\mathbb{N}$ be large enough such that 
$O(\frac1{m})+O(\frac{\ell}{m})+\frac{2H(\eps/8)}{m}\le\eps/6$ for all $m\ge M_3$. 
Thus for $m\ge M_2:=\max\{M_1,M_3\}$, we have
$$\mathbb{P}_{i=k}^{m_x}\bigg(z\,:\,\frac1mH\big(\pi_{\theta}(m_x)^{z,\,k},\mathcal{L}_{m}\big)\ge\beta-\eps\bigg)>1-\delta$$
by (\ref{eq:probability}), (\ref{eq:low}) and (\ref{eq:eq}).
\end{proof}
We  still need the following observation from \cite{hochman2014self}.
\begin{lemma}\label{lem:Hochman3}
	For any $\eps>0$ and $R>0$, there is $\delta_3=\delta_3(\eps, R)>0$ such that the following holds.
	For all $n\ge N_1(\eps,\,\delta_3, R)$ and $\zeta\in\mathscr{P}([-R,R]^2)$, if 
	\begin{equation}\nonumber
	\mathbb{P}^{\,\zeta}_{0\le i< n} \left(\,
	\begin{matrix}
	\zeta^{x,\,i}\,\text{is}\, (0,\,\delta_3)-\text{concentrated}
	\end{matrix}
	\,\right) > 1-2\delta_3,
	\end{equation}
	then
	$$\frac1nH(\zeta,\mathcal{L_n})<\eps.$$
\end{lemma}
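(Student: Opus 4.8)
The plan is to pass from $\frac1n H(\zeta,\mathcal{L}_n)$ to an average over $0\le i<n$ of the normalised block entropies $\frac1m H(\zeta^{x,i},\mathcal{L}_m)$ for a single well-chosen block length $m$, and then to exploit that a $(0,\delta_3)$-concentrated component carries essentially no entropy at scale $m$ once $\delta_3\le b^{-m}$. First I would replace $R$ by $\max\{R,1\}$ and apply Lemma \ref{lem:decomposition} to get
$$\frac1n H(\zeta,\mathcal{L}_n)=\mathbb{E}^{\,\zeta}_{0\le i<n}\Big(\frac1m H(\zeta^{x,i},\mathcal{L}_m)\Big)+O\Big(\frac mn+\frac{\log R}n\Big),$$
the block length $m$ being fixed later as a function of $\eps$ only. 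Since every component $\zeta^{x,i}$ is a probability measure on the unit square, there is an absolute constant $C_0$ (for instance $C_0=4$, using $b\ge2$) with $\frac1m H(\zeta^{x,i},\mathcal{L}_m)\le C_0$ for all $m\ge1$; together with the hypothesis that the event $\{x:\zeta^{x,i}\text{ is not }(0,\delta_3)\text{-concentrated}\}$ has $\mathbb{P}^{\,\zeta}_{0\le i<n}$-mass less than $2\delta_3$, this shows the contribution of those bad scales to the average above is at most $2C_0\delta_3$.

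For a scale $i$ at which $\zeta^{x,i}$ \emph{is} $(0,\delta_3)$-concentrated there is a ball $\mathbf{B}(w,\delta_3)$ with $\zeta^{x,i}\big(\mathbf{B}(w,\delta_3)\big)\ge1-\delta_3$; writing $\zeta^{x,i}=(1-p)\nu+p\nu'$ with $p\le\delta_3$, where $\nu$ is the normalised restriction of $\zeta^{x,i}$ to $\mathbf{B}(w,\delta_3)$ and $\nu'$ the normalised remainder, the convexity clause of Lemma \ref{lem:concave} gives
$$\frac1m H(\zeta^{x,i},\mathcal{L}_m)\le\frac1m H(\nu,\mathcal{L}_m)+p\cdot\frac1m H(\nu',\mathcal{L}_m)+\frac1m H(p)\le\frac1m H(\nu,\mathcal{L}_m)+C_0\delta_3+\frac1m.$$
When $\delta_3\le b^{-m}$ the ball $\mathbf{B}(w,\delta_3)$ is contained in a square of side $2\delta_3\le 2b^{-m}$, hence meets at most $9$ cells of $\mathcal{L}_m$ (at most three in each coordinate direction), so $\nu$ is supported on at most $9$ atoms of $\mathcal{L}_m$ and $\frac1m H(\nu,\mathcal{L}_m)\le\frac{\log_b 9}{m}$.

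Combining the two cases, $\mathbb{E}^{\,\zeta}_{0\le i<n}\big(\frac1m H(\zeta^{x,i},\mathcal{L}_m)\big)\le\frac{\log_b 9+1}{m}+3C_0\delta_3$, and I would then fix the parameters in the order forced by the statement: choose $m=m(\eps)$ so large that $\frac{\log_b 9+1}{m}<\eps/4$; then choose $\delta_3=\delta_3(\eps,R)\le\min\{b^{-m},\,\eps/(12C_0)\}$, which makes $3C_0\delta_3\le\eps/4$; and finally take $N_1=N_1(\eps,\delta_3,R)$ large enough that the error term $O(m/n+\log R/n)$ from Lemma \ref{lem:decomposition} is $<\eps/4$ for every $n\ge N_1$. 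This yields $\frac1n H(\zeta,\mathcal{L}_n)<\eps$, as required.

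This argument is entirely soft and does not interact with the dynamics of $T$; the only points demanding attention are the order in which $m$, $\delta_3$ and $N_1$ are chosen and the elementary geometric bound on the number of $\mathcal{L}_m$-cells meeting a ball of radius $\le b^{-m}$, so I do not expect a genuine obstacle here. It is the two-dimensional analogue of the observation in \cite{hochman2014self}.
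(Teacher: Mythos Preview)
Your proof is correct and follows essentially the same route as the paper: both apply Lemma~\ref{lem:decomposition} with a fixed block length $m$, split a $(0,\delta_3)$-concentrated component as a convex combination via Lemma~\ref{lem:concave}, use that the concentrated piece meets at most nine atoms of $\mathcal{L}_m$ once $\delta_3\le b^{-m}$, and then choose $m,\delta_3,N_1$ in that order. Your write-up is in fact more explicit than the paper's about the contribution of the bad (non-concentrated) scales, which the paper handles only implicitly.
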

\begin{proof}
Without loss generality we may assume $\delta_3=1/b^m$ for some $m\in\mathbb{Z}_+$.
Since by Lemma \ref{lem:decomposition} we have	
$$\frac1nH(\zeta,\mathcal{L}_n)=\mathbb{E}^{\,\zeta}_{0\le i<n}\bigg(\frac1mH(\zeta^{x,i},\mathcal{L}_m) \bigg)+O\big(\frac mn+\frac{\log R}n\big),$$		
thus it  suffice to prove the following claim.	
\begin{claim}
	For any $\eps>0$,  $m\ge M(\eps)$ and measure $\mu\in\mathscr{P}([0,1]^2)$, the following holds.
	If $\mu\,\text{is}\, (0,\,1/b^m)-\text{concentrated}$, then
	$$\frac1mH(\mu,\mathcal{L}_m)\le\eps/2.$$
\end{claim}
Indeed we only need to choose $n$  large enough. In the rest we shall prove the claim. By the definition of $(0,\,1/b^m)$-concentrated,
there are $\tau',\tau''\in\mathscr{P}([0,1]^2)$ such that $\text{supp}(\tau')\subset\boldsymbol{B}(p,1/b^m)$ for some $p\in\mathbb{C}$ and
$$\mu=(1-t)\tau'+t\tau''$$
where $t\le1/b^m.$ By the convexity of entropy we have
$$\frac1mH(\mu,\mathcal{L}_m)\le(1-t)\,\frac1mH(\tau',\mathcal{L}_m)+t\,\frac1mH(\tau'',\mathcal{L}_m)+H(t)=O(\frac1m)$$
since $\tau'$ intersects at most nine elements of $\mathcal{L}_m$ and $H(t)=O(\sqrt{t})$.
\end{proof}
\begin{proof}[The proof of Theorem \ref{thm:entropygrowth}] Choose $\delta_3=\delta_3(\eps,R)$ and $N_1(\eps,\delta_3,R)$ by Lemma \ref{lem:Hochman3}. Also $\alpha<2$ implies
that $\alpha<\beta+1$ by Corollary \ref{cor:relation}.  let $$\delta_6:=(\frac19\min\{\delta_3,\beta+1-\alpha,\,2-\alpha,\,1\})^2.$$
Also by Lemma \ref{lem:projectionlow}, for any $m\ge M_2(\delta_6,\delta_6)>0$ and $k\ge K_1(\delta_6,\delta_6,m)$ such that
\begin{equation}\label{eq:lowprobabilty}
\inf_{x,\,\theta\in[0,1]}\mathbb{P}^{\,m_x}_{i=k}\bigg(\,\frac1mH\big(\pi_{\theta}(m_x)^{z,\,i},\mathcal{L}_m \big)\ge\beta-\delta_6\,\bigg)\ge1-\delta_6.
\end{equation}	
By Theorem \ref{thm:hochman} choose $\delta_7=\delta_0(\delta_6,R,m)$ and $N_0(\delta_6,R,\delta_7,m)$. Let $M= M_2(\delta_6,\delta_6)$
and
$N=N(\eps,\phi,\gamma)=\max\{N_0,N_1, K_1/\delta_3\}$.

 If Theorem \ref{thm:entropygrowth} fails for some $m\ge M$ and $n\ge N$, then there exists a sequence $V_0,\ldots V_{n-1}\le\mathbb{C}$ of subspace such that
(\ref{eq:hochman}) holds for measures $m_x $ and $\mu$. Thus, by Markov equality there are subset $I\subset\{0,1,\ldots,n-1\}$ such that $\frac{\# I}b\ge1-\sqrt{\delta_6}$ and
\begin{equation}\label{eq:hochman2}
\mathbb{P}^{\,m_x\times\mu}_{i=k} \left(\,
\begin{matrix}
(m_x)^{z,\,i}\,\text{is}\, (V_i,\,\delta_6,\,m)-\text{saturated and} \\
\mu^{y,\,i}\,\text{is}\, (V_i,\,\delta_6)-\text{concentrated}
\end{matrix}
\,\right) \ge1-\sqrt{\delta_6}
\end{equation}
for each $k\in I.$ Let $I_j$ be the union of all the elements $t\in I$ such that $\text{dim}(V_t)=j$ for $j=0,1,2$.
Also our condition implies that
there are subset $J\subset\{0,1,\ldots,n-1\}$ such that $\frac{\# J}b\ge1-\sqrt{\delta_6}$ and
\begin{equation}\label{eq:hochman3}
	\mathbb{P}^{\,m_x}_{i=k} \left(\frac{1}{m} H ((m_x)^{x,\,i},\mathcal{L}^{\mathbb{C}}_{i+m})<\alpha+\delta_6\right) \ge1-\sqrt{\delta_6}
\end{equation}
for each $k\in J.$ Since $\alpha+\delta_6<2-\delta_6$ and $1-\sqrt{\delta_6}>1/2$, we have $I_2\cap J=\emptyset$ by Remark \ref{rem:saturated}, (\ref{eq:hochman2}) and (\ref{eq:hochman3}). Therefore
we have
\begin{equation}\label{eq:number}
\frac{\#I_2}n\le\sqrt{\delta_6}.
\end{equation}
Also we have $\alpha+\delta_6<\beta+1-\delta_6$. Thus (\ref{eq:lowprobabilty}), (\ref{eq:hochman2}), (\ref{eq:hochman3}) and $\frac{K_1}n\le\delta_6$ imply that
\begin{equation}\nonumber
\frac{\#I_1}n\le2\sqrt{\delta_6}.
\end{equation}
Combining this with (\ref{eq:number}) we have
$$\frac{\#I_0}n\ge1-4\sqrt{\delta_6}.$$
Thus we have
\begin{equation}\nonumber
\mathbb{P}^{\,\mu}_{0\le i<n} \left(\,
\begin{matrix}
\mu^{y,\,i}\,\text{is}\, (0,\,\delta_6)-\text{concentrated}
\end{matrix}
\,\right)\ge (1-4\sqrt{\delta_6})(1-\sqrt{\delta_6}) \ge1-\delta_3
\end{equation}
by (\ref{eq:hochman2}).
This implies
$\frac1nH(\mu,\mathcal{L}_n)<\eps$ by Lemma \ref{lem:Hochman3}, which contradicts our condition.
\end{proof}

\section{The proof of Theorem A}\label{sec:proveA}
In this section,  we will use Theorem \ref{thm:entropygrowth} to finish the proof of  Theorem A.  Since the  proof is similar to \cite{Ren2022},  some details will be omitted. In the rest of paper, we suppose that 
$\phi(x)$ is a real analytic $\mathbb{Z}$-periodic function such that the condition (H) holds for fixed integer $b\ge2$ and $\gamma\in\mathbb{C}$ such that $0<|\gamma|<1$. 
\subsection{Entropy Porosity} In this subsection, we first recall the definition of entropy porosity from \cite{barany2019hausdorff}, then we will show the entropy porosity properties of $m_x$ for all $x\in[0,1]$.
\begin{definition}[Entropy porous]
	A measure $\mu \in \mathscr{P}(\mathbb{C})$ is $(h,\delta,m)$-\em{ entropy porous} from scale $n_1$ to $n_2$ if
	$$\mathbb{P}^{\,\mu}_{n_1\le i < n_2} \left(\frac{1}{m} H (\mu^{x,\,i},\mathcal{L}^{\mathbb{C}}_{m})<h+\delta\right) >1-\delta.$$
\end{definition}

The main result of this subsection is the following Theorem~\ref{thm:entporous}. The idea of the proof is inspired by \cite[Proposition 3.2]{barany2019hausdorff} and \cite[Theorem 5.1]{Ren2022}.
\begin{theorem}\label{thm:entporous}
	Assume the condition (H) holds and $\Delta\in\mathbb{R}\setminus\mathbb{Q}$. Then
	for any $\eps>0$, $m\ge M_4(\eps),$ $k\ge K_4(\eps,m)$ and $n\ge N_4(\eps,m,k)$, the following holds:	
	$$
	\nu^{n}\left(\left\{\textbf{i}\in \varLambda^{n}: m_{\textbf{i}(0)} \text{ is } (\alpha, \eps, m)-\text{entropy porous from scale } 1 \text{ to } k\right\}\right)>1-\eps.
	$$
\end{theorem}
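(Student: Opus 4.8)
The plan is to deduce the statement from the Ledrappier--Young data $\dim(m_x)=\alpha$ for Lebesgue-a.e.\ $x$ (Theorem~\ref{TheoremB}) together with Proposition~\ref{prop:Young} and the self-similar relation (\ref{FundementalFormular})--(\ref{T^nm_x}), by an argument modelled on \cite[Proposition 3.2]{barany2019hausdorff} and \cite[Theorem 5.1]{Ren2022}. First I would record the ``global'' input: since $\dim(m_x)=\alpha$ for a.e.\ $x$ and each $m_x$ is exact-dimensional, Proposition~\ref{prop:Young} and Lemma~\ref{lem:decomposition} give, for a.e.\ $x$,
\begin{equation}\nonumber
\lim_{n\to\infty}\mathbb{E}^{\,m_x}_{0\le i<n}\Big(\tfrac1m H(m_x^{x,\,i},\mathcal{L}^{\mathbb{C}}_{m})\Big)=\alpha\qquad(m\text{ fixed, then }m\to\infty),
\end{equation}
after passing $m\to\infty$ inside using that $\tfrac1mH(\cdot,\mathcal{L}_m)$ is bounded by $2+o(1)$. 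Integrating over $x$ and using Fatou/bounded convergence upgrades this to an $L^1(dx)$ statement: for every $\eps>0$ there are $m=m(\eps)$ and $k=k(\eps,m)$ such that
\begin{equation}\nonumber
\int_{[0,1)}\mathbb{P}^{\,m_x}_{0\le i<k}\Big(\tfrac1m H(m_x^{x,\,i},\mathcal{L}^{\mathbb{C}}_{m})\ge\alpha+\eps\Big)\,dx<\eps^2,
\end{equation}
i.e.\ the set of $x$ for which $m_x$ fails to be $(\alpha,\eps,m)$-entropy porous from scale $1$ to $k$ has Lebesgue measure $<\eps$ (Markov). Here one uses that $\tfrac1mH(m_x^{x,i},\mathcal{L}_m)$ can only exceed $\alpha+\eps$ on a small fraction of scales, which is exactly the non-porosity event; the lower tail needs no control.

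The second step transfers this from Lebesgue-typical base points to the combinatorially-typical base points $\textbf{i}(0)$, $\textbf{i}\in\varLambda^n$, $n$ large. The key point is that $\{\textbf{i}(0):\textbf{i}\in\varLambda^n\}$ is the set of $b^{-n}$-grid points in $[0,1)$, and by (\ref{eq:symbolfunction}) the map $x\mapsto S_x(\textbf{j})$ (hence $m_x$) is Lipschitz in $x$ uniformly in $\textbf{j}$, so $x\mapsto m_x$ is continuous in the weak topology; more quantitatively, $\|S_x-S_{x'}\|_\infty=O(|x-x'|)$, so by Lemma~\ref{lem:pfclose} the functions $x\mapsto \tfrac1mH(m_x^{x',i},\mathcal L_m)$ vary by $O(1/m)$ under $b^{-n}$-perturbations of the base point once $n\gg m$. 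Combining this equicontinuity with the measure estimate of Step~1 (a standard Lebesgue-point / Fubini argument: the bad set is open-ish up to small measure, so the proportion of $b^{-n}$-grid points landing in a neighbourhood of it tends to its Lebesgue measure as $n\to\infty$) yields: for $n\ge N_4(\eps,m,k)$,
\begin{equation}\nonumber
\#\{\textbf{i}\in\varLambda^n: m_{\textbf{i}(0)}\text{ is not }(\alpha,\eps,m)\text{-entropy porous from }1\text{ to }k\}\le\eps\, b^n,
\end{equation}
which is the claim after normalising by $\nu^n=b^{-n}\#$. A cleaner route to the transfer, avoiding the open-set subtlety, is to use (\ref{FundementalFormular}): for $\textbf{i}\in\varLambda^n$, $m_{\textbf{i}(0)}$ is an affine image of $m_{\textbf{i}\textbf{w}(0)}$-averages for longer words $\textbf{w}$, so $H(m_{\textbf{i}(0)}^{\,\cdot,i},\mathcal L_m)$ for the relevant range of $i$ is controlled by the $x$-average over a full-measure-approximating family; one then just needs the $L^1$ bound of Step~1 together with Markov in the word variable.

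The main obstacle is the uniformity in the transfer step — passing from ``Lebesgue-a.e.\ $x$'' to ``all but an $\eps$-fraction of dyadic rationals $\textbf{i}(0)$'' with constants $M_4,K_4,N_4$ independent of which grid point. This is where one must be careful that the non-porosity event is (up to $O(1/m)$) determined by finitely many scales and is an \emph{open} condition on $x$ for fixed $i<k$, so that its indicator is upper semicontinuous and grid averages converge to the integral; the continuity estimates from (\ref{eq:symbolfunction}), Lemma~\ref{lem:pfclose} and Lemma~\ref{lem:affinetransform} supply exactly this. Everything else — the entropy decomposition, the $m\to\infty$ limit, Markov's inequality — is routine and parallels \cite[Section 5]{Ren2022} and \cite[Proposition 3.2]{barany2019hausdorff}, so I would present those steps briefly and concentrate the detail on the equicontinuity-plus-Fubini transfer.
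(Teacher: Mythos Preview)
Your Step~1 contains a genuine gap. From
\[
\lim_{k\to\infty}\mathbb{E}^{\,m_x}_{0\le i<k}\Big(\tfrac1mH\big((m_x)^{x,i},\mathcal{L}_m^{\mathbb{C}}\big)\Big)=\alpha
\]
alone you \emph{cannot} conclude that $\mathbb{P}^{\,m_x}_{0\le i<k}\big(\tfrac1mH((m_x)^{x,i},\mathcal{L}_m^{\mathbb{C}})\ge\alpha+\eps\big)$ is small: the random variable takes values in $[0,2+o(1)]$, and Markov's inequality applied to a nonnegative variable with mean $\alpha$ gives only the useless bound $\alpha/(\alpha+\eps)$. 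Concretely, a configuration in which half the components sit at $\alpha-\eps$ and half at $\alpha+\eps$ has mean exactly $\alpha$ but fails $(\alpha,\eps/2,m)$-porosity. What is actually needed is a \emph{lower} bound $\tfrac1mH((m_x)^{z,i},\mathcal{L}_m)\ge\alpha-\eps$ with high $\mathbb{P}^{\,m_x}_{i=k}$-probability; only once this is in hand does mean $\approx\alpha$ force the upper tail to be small. Your sentence ``the lower tail needs no control'' is precisely backwards.

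This lower bound is not a consequence of exact-dimensionality; it is the content of Corollary~\ref{lem:projectionlow2}, whose proof uses the self-similar relation (\ref{FundementalFormular})--(\ref{T^nm_x}) to identify the $\mathcal{L}_k$-components of $m_x$ (up to a boundary set controlled via Lemma~\ref{lemNonatomic}) with convex combinations of rescaled copies $T^{\hat k+\ell}m_{\textbf{w}(x)}$, and then invokes Lemma~\ref{lem:boundsinm} together with concavity of entropy. The paper then packages the combination as Lemma~\ref{lem:entporous3}: the single deterministic hypothesis $\big|\tfrac1kH(m_x,\mathcal{L}_k)-\alpha\big|<\delta/2$ already forces $(\alpha,\eps,m)$-porosity of $m_x$, for \emph{every} $x\in[0,1]$. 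Theorem~\ref{thm:entporous} then follows by one more application of Lemma~\ref{lem:boundsinm} (at scale $k$, with $x=0$) to produce the $\nu^n$-large set of good words. Since Lemma~\ref{lem:boundsinm} and Corollary~\ref{lem:projectionlow2} are stated with $\inf_{x\in[0,1]}$, your entire transfer step --- Lipschitz continuity of $x\mapsto m_x$, Lebesgue points, semicontinuity of the non-porosity event --- is unnecessary: the argument never passes through ``Lebesgue-a.e.\ $x$'' at all.
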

Before giving the proof of Theorem \ref{thm:entporous}, we shall introduce the following Lemma \ref{lem:boundsinm}. The proof is the  same as \cite[Lemma 5.2]{Ren2022} by Lemma \ref{lemNonatomic}, thus we omit the details.
\begin{lemma}\label{lem:boundsinm}
	For any $\varepsilon>0, m\ge M_5(\varepsilon), n\ge N_5(\varepsilon,m)$,
	$$\inf\limits_{x\in [0,1]}\mathbb{\nu}^{n}\left(\,\left\{\textbf{i}\in \varLambda^{n}: \alpha-\varepsilon<\frac{1}{m}
	H(m_{\textbf{i}(x)}, \mathcal{L}_{m})<\alpha+\varepsilon\right\}\,\right) >1-\varepsilon.$$
\end{lemma}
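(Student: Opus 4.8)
The plan is to study, for fixed $m$, the function
$$g_m(x):=\tfrac1m H\big(m_x,\mathcal{L}^{\mathbb{C}}_m\big),\qquad x\in[0,1],$$
using two of its features: by Theorem~\ref{TheoremB} and Proposition~\ref{prop:Young}, $g_m(x)\to\alpha$ as $m\to\infty$ for Lebesgue-a.e.\ $x$, and (since $m_x$ is supported in $\mathbf{B}(0,R^{\phi}_{\gamma})$) the $g_m$ are uniformly bounded. For fixed $x$ the map $\mathbf{i}\mapsto\mathbf{i}(x)=\tfrac{x+i_1+i_2b+\cdots+i_nb^{n-1}}{b^n}$ is a bijection of $\varLambda^n$ onto the arithmetic net $N_x:=\{(x+k)/b^n:0\le k<b^n\}$, and it transports $\nu^n$ to the uniform probability measure on $N_x$; hence $\nu^n(\{\mathbf{i}:\mathbf{i}(x)\in B_m\})$ equals the proportion of points of $N_x$ lying in the bad set $B_m:=\{y\in[0,1]:|g_m(y)-\alpha|\ge\eps\}$, and it suffices to bound this by $\eps$. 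Since consecutive points of $N_x$ are at distance exactly $b^{-n}$, the net $N_x$ meets any interval of length $L$ in at most $Lb^{n}+1$ points, with a bound independent of $x$; so we only need to show that $B_m$ (which does not depend on $x$ at all) has small Lebesgue measure and, after a mild thickening, consists of few intervals.

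For the thickening we need an oscillation estimate for $g_m$ at scale $b^{-m}$. Because $\phi$ is Lipschitz, $S(\cdot,\mathbf{j})$ is Lipschitz in its first variable with a constant $L_\phi=\mathrm{Lip}(\phi)\sum_{k\ge1}|\gamma|^{k-1}b^{-k}$ that is independent of $\mathbf{j}$; thus $m_x=(S_x)_*\nu^{\mathbb{Z}_+}$ and $m_{x'}=(S_{x'})_*\nu^{\mathbb{Z}_+}$ are images of one and the same measure under maps that are uniformly $L_\phi|x-x'|$-close, so Lemma~\ref{lem:pfclose} gives $|H(m_x,\mathcal{L}_m)-H(m_{x'},\mathcal{L}_m)|\le C_2$ whenever $|x-x'|\le\ell_m:=b^{-m}/L_\phi$. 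Consequently $g_m$ oscillates by at most $C_2/m$ on every interval of length $\ell_m$. (One could instead invoke Corollary~\ref{cor:uppermeasurebound}, which uses condition~(H), together with Lemma~\ref{lem:hochman2}, to obtain genuine smallness of the oscillation; this is the route of \cite[Lemma~5.2]{Ren2022}, but Lemma~\ref{lem:pfclose} already suffices since $C_2/m\to0$.)

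Now fix $\eps>0$. Egorov's theorem applied to $g_m\to\alpha$ on $[0,1]$ produces a set $E\subset[0,1]$ with $\mathrm{Leb}([0,1]\setminus E)<\eps/8$ and a threshold $M_5=M_5(\eps)$, which we enlarge so that also $C_2/M_5<\eps/4$, such that $|g_m(x)-\alpha|<\eps/2$ for all $m\ge M_5$ and all $x\in E$. Fix $m\ge M_5$. If $y\in B_m$ and $y'\in[0,1]$ with $|y-y'|\le\ell_m$, then by the oscillation estimate $|g_m(y')-\alpha|\ge\eps-C_2/m>\eps/2$, so $y'\notin E$; hence the $\ell_m$-neighbourhood of $B_m$ inside $[0,1]$ lies in $[0,1]\setminus E$, has Lebesgue measure $<\eps/8$, and, being a disjoint union of intervals each of length $\ge\ell_m$, has at most $1/\ell_m=L_\phi b^{m}$ connected components. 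Combining this with the counting bound for $N_x$, the number of points of $N_x$ in $B_m$ is at most $(\eps/8)b^n+L_\phi b^m$, whence for every $x\in[0,1]$
$$\nu^n\Big(\big\{\mathbf{i}\in\varLambda^n:|g_m(\mathbf{i}(x))-\alpha|\ge\eps\big\}\Big)\le\frac{\eps}8+L_\phi b^{m-n}.$$
Taking $n\ge N_5(\eps,m)$ so large that $L_\phi b^{m-n}<\eps/8$ makes the right-hand side $<\eps$, which is the assertion; all the constants $M_5$, $\ell_m$, $N_5$ are independent of $x$, so the estimate is uniform in $x$.

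The main obstacle is the oscillation estimate in the second paragraph: entropy at a fixed finite scale is not weak-$*$ continuous, so some quantitative control of $x\mapsto H(m_x,\mathcal{L}_m)$ is indispensable, and this is exactly where Lemma~\ref{lem:pfclose} (or, in the style of \cite{Ren2022}, Corollary~\ref{cor:uppermeasurebound} via condition~(H)) enters; the additive constant it costs is harmless after dividing by $m$. Everything else — Egorov's theorem, equidistribution of $b^{-n}$-nets, and the identification of $\mathbf{i}\mapsto\mathbf{i}(x)$ with a shifted net — is routine.
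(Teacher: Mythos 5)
Your argument is correct, and it handles the one genuinely delicate point — that entropy at a fixed scale is not weak-$*$ continuous in $x$ — by a different mechanism than the paper. The paper omits the proof and refers to \cite[Lemma 5.2]{Ren2022} ``by Lemma \ref{lemNonatomic}'': there the near-continuity of $x\mapsto \frac1m H(m_x,\mathcal{L}_m)$ is obtained from non-atomicity (hence condition (H)), in the spirit of Corollary \ref{cor:uppermeasurebound} combined with Lemma \ref{lem:hochman2}, i.e.\ one shows uniformly in $x$ that $m_x$ puts little mass near the grid $\partial\mathcal{L}_m$ and then compares nearby measures in total variation. You instead exploit that $S(\cdot,\mathbf{j})$ is Lipschitz in $x$ uniformly in $\mathbf{j}$, so for $|x-x'|\le b^{-m}/L_\phi$ the maps $S_x,S_{x'}$ are $b^{-m}$-close in sup norm and Lemma \ref{lem:pfclose} gives an additive $O(1)$ entropy discrepancy, which is harmless after dividing by $m$; this avoids condition (H) altogether and makes the lemma valid for any Lipschitz $\phi$. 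The remaining skeleton (Ledrappier--Young plus Proposition \ref{prop:Young} for the a.e.\ convergence $\frac1mH(m_x,\mathcal{L}_m)\to\alpha$, Egorov, and the identification of $\{\mathbf{i}(x):\mathbf{i}\in\varLambda^n\}$ with a $b^{-n}$-net so that the bad proportion is controlled by the measure and the number of components of the thickened bad set) is shared with the cited proof, and your thickening step is exactly what is needed to make the net-counting legitimate for a merely measurable bad set. Two trivial points worth a sentence in a written version: measurability of $x\mapsto H(m_x,\mathcal{L}_m)$ (needed for Egorov) follows from joint measurability of $(x,\mathbf{j})\mapsto S(x,\mathbf{j})$, and the constant-$\phi$ degenerate case ($L_\phi=0$) is excluded by the standing hypothesis (H) (and is trivial anyway since then $\alpha=0$ and $H(m_x,\mathcal{L}_m)=0$).
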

The following is a corollary of Lemma \ref{lem:boundsinm}.
\begin{corollary}\label{lem:projectionlow2}
	Assume the condition (H) holds and $\Delta\in\mathbb{R}\setminus\mathbb{Q}.$
	Then for any $\eps>0$, $m\ge M_6(\eps)$ and $k\ge K_6(\eps,m)$, the following holds:
	\begin{equation}
	\inf_{x\in[0,1]}\mathbb{P}^{\,m_x}_{i=k}\bigg(\,\frac1mH\big((m_x)^{z,\,i},\mathcal{L}_m \big)\ge\alpha-\eps\,\bigg)\ge1-2\eps.
	\end{equation}
\end{corollary}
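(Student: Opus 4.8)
The plan is to follow the scheme of the proof of Lemma~\ref{lem:projectionlow}, replacing the uniform projected‑entropy bound that Lemma~\ref{lem:entropyprojection} supplies there by the ``most words'' bound of Lemma~\ref{lem:boundsinm}. The new difficulty is that Lemma~\ref{lem:boundsinm} only controls $\frac1m H(m_{\textbf{w}(x)},\mathcal{L}_m)$ for a $(1-\eps')$‑fraction of words $\textbf{w}$, so one must keep track of how the few low‑entropy words are distributed among the level‑$k$ cells of $\mathcal{L}^{\mathbb{C}}_k$; that bookkeeping is the heart of the argument, and (as explained in the last paragraph) it is also where the required probability bound $1-2\eps$, rather than merely a positive bound, is actually produced.

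First I would fix $\eps>0$ and pick auxiliary parameters $\eps''\ll\eps$ and $\eps'\ll\eps$ (in the end $\eps''=\eps/100$ and $\eps'$ a small multiple of $\eps$, also $\le\delta_1(\eps'')$ from Lemma~\ref{lem:hochman1}). Set $\epsilon_3:=\min\{\eps',\delta_1(\eps'')\}$, let $\delta_2=\delta_2(\epsilon_3)$ be given by Corollary~\ref{cor:uppermeasurebound}, and — exactly as in the proof of Corollary~\ref{cor:uppermeasurebound} — choose $\ell\in\mathbb{N}$, uniformly in $k$, with $2R^{\phi}_{\gamma}|\gamma|^{\hat{k}+\ell}\le\delta_2/b^{k}$ for all $k$; write $n=n(k):=\hat{k}+\ell$, see (\ref{eq:hat}). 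Using (\ref{FundementalFormular}) and (\ref{T^nm_x}) at this $n$, split $\varLambda^{n}=A_{k}\cup A_{k}^{c}$ where $\textbf{w}\in A_{k}$ iff $\mathrm{supp}(T^{n}m_{\textbf{w}(x)})$ lies in a single cell of $\mathcal{L}^{\mathbb{C}}_{k}$. Since each $T^{n}m_{\textbf{w}(x)}$ is carried by a set of diameter $\le 2R^{\phi}_{\gamma}|\gamma|^{n}\le\delta_2/b^{k}$, every $\textbf{w}\in A_{k}^{c}$ has $\mathrm{supp}(T^{n}m_{\textbf{w}(x)})\subset\partial\mathcal{L}_{k}^{(\delta_2/b^{k})}$, whence $\#A_{k}^{c}/b^{n}\le m_x(\partial\mathcal{L}_{k}^{(\delta_2/b^{k})})<\epsilon_3$ by Corollary~\ref{cor:uppermeasurebound}. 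Put $\eta':=\frac{1}{\#A_{k}}\sum_{\textbf{w}\in A_{k}}T^{n}m_{\textbf{w}(x)}$, so $m_x=(1-p)\eta'+p\eta''$ with $p=\#A_{k}^{c}/b^{n}<\epsilon_3\le\delta_1(\eps'')$; Lemma~\ref{lem:hochman1} gives $\mathbb{P}^{\,m_x}_{i=k}\big(\|(m_x)_{z,k}-\eta'_{z,k}\|<\eps''\big)>1-\eps''$.

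Next one estimates $\frac1m H(\eta'_{I},\mathcal{L}_{k+m})$ cell by cell. For a cell $I\in\mathcal{L}^{\mathbb{C}}_k$ with $B_{I,k}:=\{\textbf{w}\in A_{k}:\mathrm{supp}(T^{n}m_{\textbf{w}(x)})\subset I\}\ne\emptyset$ one has $\eta'_{I}=\frac{1}{\#B_{I,k}}\sum_{\textbf{w}\in B_{I,k}}T^{n}m_{\textbf{w}(x)}$, and, writing $(m_x)^{z,k}=g_{I}((m_x)_{z,k})$ and $H(\eta'_{I},\mathcal{L}_{k+m})=H(g_{I}\eta'_{I},\mathcal{L}_{m})$, using concavity of $\mu\mapsto H(\mu,\mathcal{L}_m)$ (Lemma~\ref{lem:concave}) together with Lemma~\ref{lem:affinetransform} applied to the affine map $g_{I}\circ f_{x,\textbf{w}}$ — whose linear part $b^{k}\gamma^{n}$ has modulus in $(|\gamma|^{\ell+1},|\gamma|^{\ell}]$, so $[\log_b|b^k\gamma^n|]=-c_\ell$ with $c_\ell=O_{\gamma,\eps}(1)\ge 0$ (a rotation costs only $O(1)$ in $H(\cdot,\mathcal{L}_m)$) — one obtains
\[
\frac1m H\big(\eta'_{I},\mathcal{L}_{k+m}\big)\ \ge\ \frac{1}{\#B_{I,k}}\sum_{\textbf{w}\in B_{I,k}}\frac1m H\big(m_{\textbf{w}(x)},\mathcal{L}_{m+c_\ell}\big)+O(1/m).
\]
Apply Lemma~\ref{lem:boundsinm} at level $m+c_\ell$ with word length $n(k)$: for $m\ge M_6(\eps)$ and $k\ge K_6(\eps,m)$ at most $\eps'b^{n}$ words $\textbf{w}\in\varLambda^{n}$ are \emph{bad}, i.e.\ $\frac{1}{m+c_\ell}H(m_{\textbf{w}(x)},\mathcal{L}_{m+c_\ell})\le\alpha-\eps'$, while every other word contributes more than $\alpha-\eps'$ to the average above and each summand is $\le 2$. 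Hence, for $m$ large enough that the $O(1/m)$ term is $<\eps/6$, a cell $I$ with $\frac1m H(\eta'_{I},\mathcal{L}_{k+m})<\alpha-\eps/2$ must have a fraction exceeding some $c_0=c_0(\eps)>0$ of its words $\textbf{w}\in B_{I,k}$ bad; summing $\#(\text{bad words in }B_{I,k})\ge c_0\#B_{I,k}$ over all such cells gives $\sum\#B_{I,k}\le\eps'b^{n}/c_0$, so, adding the $A_k^c$‑mass (and the mass of cells with $\#B_{I,k}=0$, which is $\le p<\eps'$), the total $m_x$‑measure of these ``bad cells'' is $\le\eps'/c_0+2\eps'$.

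Finally I would combine the two events. On the event $\|(m_x)_{z,k}-\eta'_{z,k}\|<\eps''$ — of $m_x$‑measure $>1-\eps''$ — Lemma~\ref{lem:hochman2} (both measures live on $O(b^{2m})$ cells of $\mathcal{L}_m$ after rescaling by $g_{\mathcal{L}_k(z)}$) gives $\big|\frac1m H((m_x)^{z,k},\mathcal{L}_m)-\frac1m H(\eta'_{z,k},\mathcal{L}_{k+m})\big|<5\eps''$ for $m$ large, so $\frac1m H((m_x)^{z,k},\mathcal{L}_m)<\alpha-\eps$ forces $\frac1m H(\eta'_{\mathcal{L}_k(z)},\mathcal{L}_{k+m})<\alpha-\eps/2$, i.e.\ $z$ lies in a bad cell. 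Therefore $\mathbb{P}^{\,m_x}_{i=k}\big(\frac1m H((m_x)^{z,k},\mathcal{L}_m)<\alpha-\eps\big)\le\eps'/c_0+2\eps'+\eps''$, and choosing $\eps''=\eps/100$ and $\eps'$ small enough (and $\le\delta_1(\eps'')$) makes the right‑hand side $<2\eps$, uniformly in $x\in[0,1]$, which is the claim. The main obstacle is precisely this last step: unlike in Lemma~\ref{lem:projectionlow} there is no uniform lower bound for $\frac1m H(m_x,\mathcal{L}_m)$, so a lower bound on the \emph{average} entropy of the pieces would only yield a positive — not close to $1$ — lower bound for the probability of large entropy; one genuinely needs the fact that a level‑$k$ cell can enter the bad set only when it is heavily populated by the few low‑entropy words, together with the non‑atomicity input of Corollary~\ref{cor:uppermeasurebound} that keeps the straddling words negligible.
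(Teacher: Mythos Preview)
Your argument is correct, but it is more laborious than the paper's. The paper avoids the whole ``bad-cell'' bookkeeping by a simple device: instead of building $\eta'$ from all non-straddling words $A_{k,\ell}$ and then controlling, cell by cell, the fraction of low-entropy words, it first intersects $A_{k,\ell}$ with the set
\[
B_{k,\ell}:=\Big\{\textbf{w}\in\varLambda^{\hat{k}+\ell}:\ \tfrac1m H(m_{\textbf{w}(x)},\mathcal{L}_m)>\alpha-\eps/8\Big\}
\]
supplied by Lemma~\ref{lem:boundsinm}, sets $\tilde A_{k,\ell}:=A_{k,\ell}\cap B_{k,\ell}$, and builds $\eta'$ from $\tilde A_{k,\ell}$ only. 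Since $\#\tilde A_{k,\ell}^{\,c}/b^{\hat{k}+\ell}$ is still small (the complements of $A_{k,\ell}$ and $B_{k,\ell}$ are each small), Lemma~\ref{lem:hochman1} applies exactly as in Lemma~\ref{lem:projectionlow}; and now \emph{every} cell $I$ with $B_{I,k}\neq\emptyset$ contains only high-entropy words, so the analogue of (\ref{eq:low}) gives $\tfrac1m H(\eta'_{I},\mathcal{L}_{k+m})\ge\alpha-\eps/8+O(\ell/m)$ for all such cells at once. The rest is then identical to the end of the proof of Lemma~\ref{lem:projectionlow}, with no counting of bad cells needed.

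In short, the difference is where the exceptional words from Lemma~\ref{lem:boundsinm} are discarded: you keep them in $\eta'$ and control their effect via a Markov-type counting argument, whereas the paper throws them out before forming $\eta'$, so that the concavity bound is automatically uniform over cells. Your route works, but the paper's is shorter and explains why the corollary is stated as an immediate variant of Lemma~\ref{lem:projectionlow}. One minor technical remark on your write-up: the affine map $g_I\circ f_{x,\textbf{w}}$ has complex linear part $b^k\gamma^n$, so strictly speaking Lemma~\ref{lem:affinetransform} (stated for real $a$) does not apply verbatim; your parenthetical that a rotation costs only $O(1)$ is correct and is what the paper uses implicitly, but it is worth making explicit.
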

\begin{proof}For any $k,\,\ell\in\mathbb{Z}_+$ large enough,
   define the set 
   $$B_{k,\,\ell}=B_{k,\,\ell}(\eps,m):=\left\{\textbf{i}\in \varLambda^{\hat{k}+\ell}: \frac{1}{m}
   H(m_{\textbf{i}(x)}, \mathcal{L}_{m})>\alpha-\varepsilon/8\right\}$$
and $\tilde{A}_{k,\,\ell}:=A_{k,\,\ell}\cap B_{k,\,\ell}$
 where $A_{k,\,\ell}$ is from Lemma \ref{lem:projectionlow}. let $\tilde{A}^c_{k,\,\ell}:= \varLambda^{\hat{k}+\ell}\setminus\tilde{A}_{k,\,\ell}$, thus
 Corollary \ref{lem:projectionlow2} can be proved by the similar method of Lemma \ref{lem:projectionlow} with Lemma \ref{lem:boundsinm}.
\end{proof}
\begin{lemma}\label{lem:entporous3}
	Assume the condition (H) holds and $\Delta\in\mathbb{R}\setminus\mathbb{Q}.$
	For any $\varepsilon>0$, there exists $\delta>0$ such that if 	
	$m\ge M_7(\varepsilon)$ and $k\ge K_7(\varepsilon,m)$ and if $\left|\frac{1}{k} H(m_x, \mathcal{L}_k)-\alpha\right|<\frac{\delta}{2}$, then $m_x$ is $(\alpha, \varepsilon, m)$-entropy porous from scale $1$ to $k$.
\end{lemma}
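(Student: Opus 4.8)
The plan is to deduce the statement from Corollary~\ref{lem:projectionlow2} together with the entropy decomposition Lemma~\ref{lem:decomposition}, by a Markov-inequality argument. Throughout, abbreviate $X_i=X_i(z):=\frac1mH\big((m_x)^{z,\,i},\mathcal{L}^{\mathbb{C}}_m\big)$, regarded as a random variable on the probability space $(\mathbb{C},m_x)$; since every component $(m_x)^{z,\,i}$ is a probability measure supported in $[0,1)^2$ we have $0\le X_i\le 2$, and $\alpha\in[0,2]$ by Theorem~\ref{TheoremB}. Write $a_i:=\mathbb{E}_{m_x}(X_i)$. By definition, $m_x$ being $(\alpha,\varepsilon,m)$-entropy porous from scale $1$ to $k$ is exactly the assertion $\mathbb{P}^{\,m_x}_{1\le i<k}(X_i\ge\alpha+\varepsilon)<\varepsilon$, so this is what I would prove, with all constants independent of $x$.

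First I would fix a small auxiliary parameter $\varepsilon_0=\varepsilon_0(\varepsilon)\in(0,1)$ and invoke Corollary~\ref{lem:projectionlow2}: for $m\ge M_6(\varepsilon_0)$ and every scale $i\ge K_6(\varepsilon_0,m)$ one has $m_x(\{z:X_i(z)<\alpha-\varepsilon_0\})\le 2\varepsilon_0$, uniformly in $x\in[0,1]$. For such $i$ set $Z_i:=(X_i-\alpha+\varepsilon_0)^+\ge 0$. Using the identity $Z_i=(X_i-\alpha+\varepsilon_0)+(\alpha-\varepsilon_0-X_i)^+$ and noting that the last summand is supported on $\{X_i<\alpha-\varepsilon_0\}$, where it is $\le\alpha\le 2$, one gets $\mathbb{E}_{m_x}(Z_i)\le a_i-\alpha+5\varepsilon_0$. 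Since $\{X_i\ge\alpha+\varepsilon\}\subseteq\{Z_i\ge\varepsilon\}$, Markov's inequality yields the scalewise bound
$$m_x\big(\{z:X_i(z)\ge\alpha+\varepsilon\}\big)\le\frac{a_i-\alpha+5\varepsilon_0}{\varepsilon}\qquad\text{for every }i\ge K_6(\varepsilon_0,m).$$

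Next I would average over $i$. By Lemma~\ref{lem:decomposition} applied with $R=R^{\phi}_{\gamma}$ (a fixed constant) we have $\frac1kH(m_x,\mathcal{L}_k)=\frac1k\sum_{i=0}^{k-1}a_i+O(m/k)$, so the hypothesis $\frac1kH(m_x,\mathcal{L}_k)<\alpha+\frac{\delta}{2}$ gives $\frac1k\sum_{i=0}^{k-1}a_i<\alpha+\delta$ once $k$ is large relative to $m$ and $\delta$. Summing the scalewise bound over $K_6(\varepsilon_0,m)\le i<k$, discarding the $O(K_6(\varepsilon_0,m))$ initial scales (which contribute $O(K_6/k)$ to the average), and using $a_i\ge 0$, one arrives at
$$\mathbb{P}^{\,m_x}_{1\le i<k}(X_i\ge\alpha+\varepsilon)\le O\!\Big(\frac{K_6}{k}\Big)+\frac1\varepsilon\Big(\delta+O\!\Big(\frac{K_6}{k}\Big)+5\varepsilon_0\Big).$$
It then suffices to choose $\varepsilon_0$ and $\delta$ small enough in terms of $\varepsilon$ (this fixes $\delta=\delta(\varepsilon)$ and $M_7(\varepsilon):=M_6(\varepsilon_0)$), and then take $k\ge K_7(\varepsilon,m)$ large enough that both $O(K_6/k)$-terms and the decomposition error are each much smaller than $\varepsilon$; the right-hand side is then $<\varepsilon$, which is precisely the desired porosity.

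I expect the only substantive step to be the uniform lower bound of Corollary~\ref{lem:projectionlow2} — the fact that at all large scales the components of $m_x$ cannot have entropy much below $\alpha$ — since it is exactly what prevents the value $\frac1kH(m_x,\mathcal{L}_k)\approx\alpha$ from being realized by a non-negligible fraction of very large components offset by very small ones, and it is the point where the hypotheses (H) and $\Delta\notin\mathbb{Q}$ really enter (through the chain Lemma~\ref{lemNonatomic} $\Rightarrow$ Lemma~\ref{lem:boundsinm} $\Rightarrow$ Corollary~\ref{lem:projectionlow2}). Granting that, the rest is the elementary two-sided squeeze above — an upper bound on the average of the $a_i$ against a near-lower-bound for each $a_i$ — and the only care required is keeping every constant independent of $x$ and handling the finite range $i<K_6(\varepsilon_0,m)$ of small scales.
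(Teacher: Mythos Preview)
Your argument is correct and is essentially the one the paper has in mind: it uses exactly the two ingredients the paper cites (Lemma~\ref{lem:decomposition} and Corollary~\ref{lem:projectionlow2}) and combines them via the Markov/squeeze argument of \cite[Lemma~3.7]{barany2019hausdorff}. The only cosmetic points are that your scalewise bound $\mathbb{E}(Z_i)\le a_i-\alpha+5\varepsilon_0$ is indeed nonnegative (since Corollary~\ref{lem:projectionlow2} already forces $a_i\ge\alpha-5\varepsilon_0$), and the $1/k$ versus $1/(k-1)$ normalizations are harmless for large $k$.
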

\begin{proof}
	The method of  proof is similar to \cite[Lemma 3.7]{barany2019hausdorff} by Lemma \ref{lem:decomposition} and Corollary \ref{lem:projectionlow2}, thus we omit the details.
\end{proof}
\begin{proof}[The proof of Theorem \ref{thm:entporous}]
	The proof is  same  as \cite[Theorem 5.1]{Ren2022}.
\end{proof}

\subsection{Exponential separation}\label{sep}
In this subsection, we deduce from the condition (H) the  exponential separation properties as \cite{Ren2022} did.
\begin{definition}\label{def:ex}
	Let $E_1,E_2,\ldots$ be  subsets of $\mathbb{C}$. 
	For any $\eps>0$ and  $Q\subset\mathbb{Z}_+$,
	we say that the sequence $(E_n)_{n\in\mathbb{Z}_+}$ is $(\varepsilon,Q)$ -exponential separation if
	$$|p-q|>\sqrt{2}\,\varepsilon^{\hat{n} }\quad\quad \forall p\neq q\in E_{\hat{n}}$$
	for each $n\in Q.$
\end{definition}
The following is a corollary of \cite[Lemma 5.8]{hochman2014self}.
\begin{corollary}\label{lemHochmanSeperation2}
	For any $k\in\mathbb{N}$ and compact interval $J\subset\mathbb{R}$,
	let $F:J\to\mathbb{C}$ be a $k+1$-times continuously differentiable function. Let $M=\parallel F\parallel_{J,\,k+1}$, and let $0<d<1$ be such that for every $x\in J$ there is a $p\in\{0,1,\ldots,k\}$ with $|F^{(p)}(x)|>d.$ Then for every $0<\rho<(d/2)^{2^k}$, the set $F^{-1}\big(\mathbf{B}(0,\rho)\big)\subset J$ can be cover by $O_{k, M,|J|}(1/{d^{k+1}})$ intervals of length $\le2(\rho/d)^{1/{2^k}}$ each.
\end{corollary}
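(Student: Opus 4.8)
The plan is to deduce this from the real-valued statement \cite[Lemma 5.8]{hochman2014self} by splitting $\mathbb{C}=\mathbb{R}^{2}$ into its two coordinate axes and applying that lemma to each separately. Write $F=g+ih$ with $g=\mathrm{Re}\,F$ and $h=\mathrm{Im}\,F$, so that $g,h$ are real-valued $C^{k+1}$ functions on $J$ with $\|g\|_{J,k+1},\|h\|_{J,k+1}\le\|F\|_{J,k+1}=M$. First I would record the two elementary facts that make the reduction go. Since $\mathbf{B}(0,\rho)\subset\{z\in\mathbb{C}:|\mathrm{Re}\,z|\le\rho\}\cap\{z\in\mathbb{C}:|\mathrm{Im}\,z|\le\rho\}$, we get $F^{-1}(\mathbf{B}(0,\rho))\subset g^{-1}([-\rho,\rho])\cap h^{-1}([-\rho,\rho])$; and since $|F^{(p)}(x)|^{2}=|g^{(p)}(x)|^{2}+|h^{(p)}(x)|^{2}$, the hypothesis $\max_{0\le p\le k}|F^{(p)}(x)|>d$ forces, at each $x\in J$, that $\max_{0\le p\le k}|g^{(p)}(x)|>d/\sqrt{2}$ or $\max_{0\le p\le k}|h^{(p)}(x)|>d/\sqrt{2}$. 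Hence $J=A\cup B$, where $A$ and $B$ are the relatively open sets on which, respectively, $g$ or $h$ has a derivative of order $\le k$ of modulus $>d/\sqrt{2}$, and therefore
\[
F^{-1}(\mathbf{B}(0,\rho))\subset\big(g^{-1}([-\rho,\rho])\cap A\big)\cup\big(h^{-1}([-\rho,\rho])\cap B\big).
\]

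Next I would cover the two pieces separately, the arguments being symmetric. Write $A$ as a disjoint union $A=\bigsqcup_{i}A_{i}$ of relatively open subintervals of $J$. On each closure $\overline{A_{i}}$ the function $g$ satisfies $\max_{0\le p\le k}|g^{(p)}|>d/2$ throughout --- strictly, since the bound $>d/\sqrt{2}$ holds in the interior and passes to $\ge d/\sqrt{2}$ at the endpoints by continuity --- and $\|g\|_{\overline{A_{i}},k+1}\le M$. So Hochman's Lemma 5.8 applies to $g$ on the compact interval $\overline{A_{i}}$ with constant $d/2$ and covers $g^{-1}([-\rho,\rho])\cap\overline{A_{i}}$ by $O_{k,M,|A_{i}|}(d^{-(k+1)})$ intervals of length $O((\rho/d)^{1/2^{k}})$, which after tracking the bounded factors lost above is of the stated form $\le 2(\rho/d)^{1/2^{k}}$, under the standing assumption $\rho<(d/2)^{2^{k}}$. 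Doing the same for $h$ over the components of $B$ (using $F^{-1}(\mathbf{B}(0,\rho))\cap B\subset h^{-1}([-\rho,\rho])\cap B$) and taking the union of all the interval families produced yields a cover of $F^{-1}(\mathbf{B}(0,\rho))$ by intervals of the required length.

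The step I expect to be the main obstacle is bounding the \emph{total} number of intervals, i.e.\ summing the Lemma 5.8 bounds over the components $\{A_{i}\}$ and $\{B_{j}\}$, of which there may a priori be infinitely many. What I would rely on is that the number of intervals provided by Lemma 5.8 on a given interval grows at most linearly in its length --- in essence it counts the excursions of the function away from the level set $\{0\}$ --- so that, the $A_{i}$ being pairwise disjoint in $J$,
\[
\sum_{i}O_{k,M,|A_{i}|}(d^{-(k+1)})=O_{k,M}\Big(d^{-(k+1)}\sum_{i}|A_{i}|\Big)=O_{k,M,|J|}(d^{-(k+1)}),
\]
and likewise for $B$, so passing to (possibly many) components costs nothing. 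The rest is bookkeeping: the $\sqrt{2}$-losses incurred in replacing $|\cdot|$ on $\mathbb{C}$ by $|\mathrm{Re}\,\cdot|$ and $|\mathrm{Im}\,\cdot|$ only affect the implied constants and shrink the admissible range of $\rho$ by a bounded factor, and one checks that $\rho<(d/2)^{2^{k}}$ still suffices after this loss; both are routine.
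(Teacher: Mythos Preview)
Your reduction to the real-valued lemma via $\mathrm{Re}\,F$ and $\mathrm{Im}\,F$ is exactly the right idea, and matches the paper's approach. The gap is precisely where you flagged it: the summation over connected components of $A$ and $B$. Your proposed fix --- that the interval count from Lemma~5.8 grows linearly in $|A_i|$ and hence sums to $O_{k,M,|J|}(d^{-(k+1)})$ --- does not work as stated. Even if the dependence on the interval length is affine, say $C_k(1+M|A_i|)/d^{k}$, the additive $+1$ per component is fatal: a component $A_i$ with $g^{-1}([-\rho,\rho])\cap A_i\neq\emptyset$ contributes at least one covering interval, and nothing in the hypotheses prevents $A$ (or $B$) from having infinitely many such components. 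A $C^{k+1}$ bound on $F$ does not by itself control the number of components of $\{x:\max_p|g^{(p)}(x)|>d/\sqrt 2\}$.

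The paper avoids this entirely by choosing the decomposition differently. Rather than taking the components of $A$ and $B$, it partitions $J$ \emph{a priori} into $L=\lfloor 4M|J|/d\rfloor+1=O_{M,|J|}(1/d)$ subintervals $J_t$ of length $<d/(4M)$. On such a short interval the derivative $F^{(p)}$ varies by at most $M\cdot|J_t|<d/4$ for every $p\le k$, so the hypothesis $|F^{(p_t)}(a_t)|>d$ at the left endpoint propagates to $|F^{(p_t)}|>3d/4$ on all of $J_t$; taking real or imaginary part then gives a single real $C^{k+1}$ function with a \emph{fixed} derivative order bounded below by $d/4$ throughout, to which Lemma~5.8 applies directly. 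Summing the resulting $O_{k,M,|J|}(1/d^{p_t})$ bounds over the $L=O(1/d)$ pieces yields the claimed $O_{k,M,|J|}(1/d^{k+1})$. The point is that the number of pieces is controlled \emph{uniformly} in terms of $M$, $|J|$, and $d$ --- not in terms of the particular function $F$ --- which is exactly what your component decomposition fails to deliver.
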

\begin{proof}
	Let $J_t=[a_t, b_t),$ $t=1,2,\ldots,L$ be the disjoint intervals such that $J=\bigcup_{t=1}^{L}J_t$ and $|J_t|<\frac{d}{4M}$ where $L=\lfloor\frac {4M|J|}{d}\rfloor+1$. For each $t\in\{1,2,\ldots,L\}$ there is a $p_t\in\{0,1,\ldots,k\}$ with $|F^{(p_t)}(a_t)|>d$, thus either $\big|\,\text{Re}(\,F^{(p_t)}(a_t)\,)\big|>d/2$ or 
	$\big|\,\text{Im}(\,F^{(p_t)}(a_t)\,)\big|>d/2$. For the former case let $F_t: J\to\mathbb{R}$ be a $p_t$-times continuously differentiable function such that $\text{Re}(F)|_{J_t}\equiv F_t|_{J_t}$, $\parallel F_t\parallel_{J,\,p_t}=O_{M,\,|J|,\,p_t}(1)$ and $|F_t^{(p_t)}(x)|\ge d/4$ for each $x\in J$. Do the same thing for the other case. Thus we have the set $F_t^{-1}(-\rho,\rho)\subset J$ can be cover by $O_{k, M,|J|}(1/{d^{p_t}})$ intervals of length $\le2(\rho/d)^{1/{2^{p_t}}}$ each
	by \cite[Lemma 5.8]{hochman2014self}, which implies the corollary holds.
\end{proof}
The main result of this subsection is the following Theorem \ref{thmSeperation}.
\begin{theorem}\label{thmSeperation}
	There exists $\ell_0\in\mathbb{N}$ and $\varepsilon_0>0$ such that the following holds for  Lebesgue-a.e.$\,x\in [0,1]$. 
	
	For any integer $\ell\ge\ell_0$, there exists a set $Q_{x,\,\ell}\subset\mathbb{Z}_+$ such that 
	\begin{enumerate}
		\item [(i)] $\#Q_{x,\,\ell}=\infty $;
		\item [(ii)] for any $\textbf{w}\in\varLambda^{\ell}$, the sequence $(X_{n}^{\textbf{w},\,x})_{n\in\mathbb{Z}_+}$ is $(\varepsilon_0, Q_{x,\,\ell})$-exponential separation
	\end{enumerate}
	where $X_{n}^{\textbf{w},\,x}=\big\{S(x,\,\textbf{j} \textbf{w} )\,:\,\textbf{j}\in\varLambda^{n-\ell}\big\}$ for $n>\ell$ and $X_{n}^{\textbf{w},\,x}=\{0\}$ for $n\le\ell$.
\end{theorem}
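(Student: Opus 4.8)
The plan is to reduce the statement, via the scaling formula (\ref{FM}), to a statement about a single fixed function of $x$ and then apply the quantitative non-degeneracy estimate of Corollary \ref{lemHochmanSeperation2}. First I would fix $\textbf{w}\in\varLambda^\ell$ and $\textbf{i}\ne\textbf{j}\in\varLambda^{n-\ell}$ and use (\ref{FM}) to write, with $m=n-\ell$ the common length and $\textbf{u}$ the longest common prefix of $\textbf{i},\textbf{j}$ (say $|\textbf{u}|=r$),
$$S(x,\textbf{i}\textbf{w})-S(x,\textbf{j}\textbf{w})=\gamma^{r}\Big(S(\textbf{u}(x),\textbf{i}'\textbf{w})-S(\textbf{u}(x),\textbf{j}'\textbf{w})\Big),$$
where $\textbf{i}',\textbf{j}'$ start with distinct digits. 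So up to the factor $|\gamma|^{r}$, the difference is controlled by the differences $S(y,\textbf{a}\textbf{w})-S(y,\textbf{b}\textbf{w})$ with $y\in[0,1]$, $\textbf{a},\textbf{b}$ of length $\le m$ beginning with distinct symbols. Because condition (H) holds, for each such pair the analytic function $y\mapsto S(y,\textbf{a}\textbf{w})-S(y,\textbf{b}\textbf{w})$ (note it is analytic in $y$ since $\phi$ is analytic and the tail $\textbf{w}$ is bounded) is not identically zero; compactness of the symbol space and analyticity give a uniform order of vanishing, i.e.\ there is $k_0\in\mathbb N$ and $d_0>0$, independent of $\textbf{a},\textbf{b},\textbf{w}$ (for $|\textbf{w}|=\ell$ fixed, or uniformly for all $\ell$ after one more compactness argument), such that for every $y$ some derivative of order $\le k_0$ has modulus $>d_0$. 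Here I would need an argument that this non-degeneracy survives as $m\to\infty$: the functions $S(\cdot,\textbf{a}\textbf{w})$ converge (in $C^{k_0}$) to $S(\cdot,\textbf{a}^\infty)$ type limits, and condition (H) for infinite words plus a normal-families/compactness argument on $\Sigma$ gives a single pair $(k_0,d_0)$ that works for all lengths.

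Next, given $(k_0,d_0)$, I would apply Corollary \ref{lemHochmanSeperation2} to the function $F_{\textbf{a},\textbf{b},\textbf{w}}(y)=S(y,\textbf{a}\textbf{w})-S(y,\textbf{b}\textbf{w})$ on $J=[0,1]$: for every $\rho<(d_0/2)^{2^{k_0}}$, the set $F_{\textbf{a},\textbf{b},\textbf{w}}^{-1}(\mathbf B(0,\rho))$ is covered by $O(1/d_0^{k_0+1})$ intervals of length $\le 2(\rho/d_0)^{1/2^{k_0}}$. Summing over the (at most $b^{2m}$, or more carefully $b^{2r}$ using the prefix decomposition) pairs $\textbf{a},\textbf{b}$, for a suitable choice $\rho=\rho(\varepsilon_0,n)$ of the form $c\,\varepsilon_0^{\hat n}$ I would conclude that the set of $x\in[0,1]$ for which the sequence $(X_n^{\textbf{w},x})_{n}$ FAILS $(\varepsilon_0,\{n\})$-exponential separation has Lebesgue measure at most $\sum_{\textbf{a},\textbf{b}} O(1/d_0^{k_0+1})\cdot(\rho/d_0)^{1/2^{k_0}}$. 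Choosing $\varepsilon_0$ small enough (so that this geometric-type sum is summable in $n$) and appealing to Borel--Cantelli, I get that for Lebesgue-a.e.\ $x$ the separation holds for all but finitely many $n$ — but I actually want an infinite $Q_{x,\ell}$, which is weaker, so a fortiori it holds; in fact one gets cofinitely many $n$.

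The main obstacle I anticipate is \textbf{uniformity in the length $m=n-\ell$ and the handling of the longest-common-prefix factor $|\gamma|^r$}. When $r$ is comparable to $m$ the factor $|\gamma|^r$ is tiny and helps separation, but when $r$ is small (the two words differ early) one must separate functions $S(\cdot,\textbf{a}\textbf{w})-S(\cdot,\textbf{b}\textbf{w})$ with $\textbf{a},\textbf{b}$ of full length $m$, and there are exponentially many such pairs, so the covering bound $O(b^{2m})\cdot(\rho/d_0)^{1/2^{k_0}}$ forces $\rho$ to be exponentially small in $m$, i.e.\ $\varepsilon_0^{\hat n}$ with $\hat n\asymp n\log b/\log(1/|\gamma|)$ — one must check this is consistent with $\varepsilon_0$ being an absolute constant bounded away from $0$, which works precisely because $\hat n$ is the relevant exponent in Definition \ref{def:ex} and $(1/b^m)^{1/2^{k_0}}$ beats $b^{2m}$ once we only demand separation at scale $\varepsilon_0^{\hat n}$ with $\varepsilon_0$ close enough to $1$; making this bookkeeping precise, together with extracting the uniform $(k_0,d_0)$ from condition (H) via the analyticity of $\phi$ and compactness of $\Sigma$, is the technical heart. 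This is essentially the argument of \cite[Section 5]{hochman2014self} and \cite{Ren2022} adapted to $\mathbb C$-valued $S$, so I would follow their bookkeeping and only highlight the modifications coming from working with the complex modulus $|\gamma|$ and the extra tail word $\textbf{w}$.
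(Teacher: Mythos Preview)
Your proposal is correct and follows essentially the same route as the paper: the paper's proof simply cites that \cite[Lemma 5.2]{ren2021dichotomy} (which is exactly the uniform non-degeneracy datum $(k_0,d_0)$ you extract from condition (H) via analyticity and compactness of $\Sigma$) extends to complex $\gamma$, and then combines it with Corollary~\ref{lemHochmanSeperation2} and the Borel--Cantelli bookkeeping of \cite[Theorem 4.1]{Ren2022}, which is precisely the covering-count-and-sum argument you outline. Your identification of the role of $\ell_0$ (to make the finite-word differences $C^{k_0}$-close to infinite-word ones so the uniform $(k_0,d_0)$ transfers) and your verification that the exponential count $b^{2m}$ is beaten by $(\varepsilon_0^{\hat n})^{1/2^{k_0}}$ for small enough absolute $\varepsilon_0$ are exactly the points the cited references handle.
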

\begin{proof}
The proof of \cite[Lemma 5.2]{ren2021dichotomy} works for all $\gamma\in\mathbb{C}$ such that $0<|\gamma|<1$. Combining this with Corollary \ref{lemHochmanSeperation2} we can prove  Theorem \ref{thmSeperation}  by the same method of \cite[Theorem 4.1]{Ren2022}, thus we omit the proof.
\end{proof}
\subsection{Transversality}\label{tra}
In this subsection we give some quantified estimates for transversality.
For any $\textbf{i},\,\textbf{j}\in\varLambda^{\#}\cup\Sigma$ and
integer $1\le k\le|\textbf{j}|$, recall $\textbf{j}_{k}=j_1j_2\ldots j_k$. 
Write $\textbf{i}<\textbf{j}$  if $\textbf{i}=\textbf{j}_{|\textbf{i}|}$ holds.
When $|\textbf{j}|<\infty$, let $I_{\textbf{j}}$ be an interval in $[0,1]$ such that
\begin{equation}\label{eq:interval}
I_{\textbf{j}}=\bigg[\frac{i_1+i_2b+\dots+i_nb^{n-1}}{b^n},\frac{1+i_1+i_2b+\dots+i_nb^{n-1}}{b^n}\bigg).
\end{equation}
The main result of this subsection is the following Theorem \ref{thmTransversality}.
\begin{theorem}\label{thmTransversality}
	For any $t_0>0$, there exists an integer $t>t_0$, real number $\Xi_1>0$ and  $\textbf{h},\,\textbf{h}',\,\textbf{a}\in\varLambda^t$ with  the following  property.
	For every $z\in I_{\textbf{a}}$ and $\textbf{i},\,\textbf{j}\in \varLambda^{\#}$, if $\textbf{h}<\textbf{i}$, 	$\textbf{h}'<\textbf{j}$, then
	\begin{enumerate}
		\item[(A.1)] $|S'(z,\,\textbf{i})|\,,\,|S'(z,\,\textbf{j})|>\Xi_1;$
		\item[(A.2)] $|S'(z,\,\textbf{i})-S'(z,\,\textbf{j})|>\Xi_1;$
		\item[(A.3)] $\frac{\parallel\phi'\parallel_{\infty}}{(1-\gamma)b^t}<\Xi_1/4.$
	\end{enumerate}
\end{theorem}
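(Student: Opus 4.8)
The plan is to reduce Theorem~\ref{thmTransversality}, via the cocycle identity for $S(\cdot,\cdot)$, to one non-degeneracy fact about the functions $z\mapsto S'(z,\cdot)$ (derivative in the first variable), and to deduce that fact from condition (H) together with Theorem~\ref{thm:dichotomy}. Two elementary inputs: differentiating (\ref{F}) in $x$ yields, for $\textbf{w}\in\varLambda^{t}$ and any word $\textbf{i}$,
$$S'(z,\textbf{w}\textbf{i})=S'(z,\textbf{w})+(\gamma/b)^{t}\,S'\big(\textbf{w}(z),\textbf{i}\big),$$
while differentiating (\ref{S}) termwise gives $\sup_{z}|S'(z,\textbf{i})|\le\|\phi'\|_\infty\sum_{n\ge1}|\gamma|^{n-1}b^{-n}=\tfrac{\|\phi'\|_\infty}{b-|\gamma|}<\tfrac{\|\phi'\|_\infty}{1-|\gamma|}$. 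Hence, whenever $\textbf{h},\textbf{h}'\in\varLambda^{t}$, $\textbf{h}<\textbf{i}$ and $\textbf{h}'<\textbf{j}$, the quantities $|S'(z,\textbf{i})-S'(z,\textbf{h})|$ and $|S'(z,\textbf{j})-S'(z,\textbf{h}')|$ are each $<\tfrac{\|\phi'\|_\infty}{(1-|\gamma|)b^{t}}$, i.e. below the scale appearing in (A.3). So it suffices to produce, for all $t$ past a threshold, words $\textbf{h}\neq\textbf{h}'$ in $\varLambda^{t}$, a number $\Xi_1>0$ with $\tfrac{\|\phi'\|_\infty}{(1-|\gamma|)b^{t}}<\Xi_1/4$, and $\textbf{a}\in\varLambda^{t}$ such that $|S'(z,\textbf{h})|,|S'(z,\textbf{h}')|$ and $|S'(z,\textbf{h})-S'(z,\textbf{h}')|$ are all $\ge 2\Xi_1$ for every $z\in I_{\textbf{a}}$; then (A.3) is immediate, and (A.1),(A.2) follow from the triangle inequality.

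These data I would obtain from infinite words. The crux is to show that, under (H), there exist $\textbf{p}\neq\textbf{q}\in\Sigma$, a constant $c>0$, and an open interval $J\subset(0,1)$ on which $|S'(z,\textbf{p})|$, $|S'(z,\textbf{q})|$ and $|S'(z,\textbf{p})-S'(z,\textbf{q})|$ all exceed $c$. Granting this, set $\textbf{h}:=\textbf{p}_{t}$, $\textbf{h}':=\textbf{q}_{t}$, $\Xi_1:=c/3$. By the displayed identity, $\|S'(\cdot,\textbf{p})-S'(\cdot,\textbf{h})\|_\infty=O\big((|\gamma|/b)^{t}\big)\to0$ (and likewise for $\textbf{q}$), so for all large $t$ the three quantities for $\textbf{h},\textbf{h}'$ are $\ge 2\Xi_1$ on $J$, $\textbf{h}\neq\textbf{h}'$, and $J$ contains a level-$t$ $b$-adic interval $I_{\textbf{a}}$; enlarging $t$ further secures $\tfrac{\|\phi'\|_\infty}{(1-|\gamma|)b^{t}}<\Xi_1/4$ and $t>t_0$. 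Thus the whole statement reduces to this infinite-word claim, which is the hard part: (H) only asserts that $K^{\phi}_{b,\gamma}$ is not a graph, and this is weaker than the non-degeneracy we need — in particular it does not directly exclude $S'(\cdot,\textbf{p})\equiv0$ for all $\textbf{p}$, nor $S(\cdot,\textbf{p})-S(\cdot,\textbf{q})$ being a nonzero constant (which would make the derivative difference vanish identically).

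For the claim, first note that each of the three functions is real-analytic in $z$ on $[0,1]$ (a geometrically convergent series of real-analytic functions), hence if it is not identically zero its zero set is finite; so it is enough to find $\textbf{p}\neq\textbf{q}$ with all three not identically zero, then take $J$ off the finite union of their zero sets. Let $W:=\{\textbf{r}\in\Sigma:\ x\mapsto S(x,\textbf{r})\ \text{is constant}\}=\{\textbf{r}:S'(\cdot,\textbf{r})\equiv0\}$. The case $t=1$ of the displayed identity, $S'(z,\textbf{r})=\tfrac1b\phi'\big(\tfrac{z+r_1}{b}\big)+\tfrac{\gamma}{b}S'\big(\tfrac{z+r_1}{b},\tau\textbf{r}\big)$ with $\tau$ the shift, shows $W\neq\Sigma$: otherwise $\phi'\equiv0$ on each $I_{j}$, $j\in\varLambda$, so $\phi$ is constant, contradicting (H). Fix $\textbf{p}^*\notin W$ and put $F:=S'(\cdot,\textbf{p}^*)\not\equiv0$. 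If there is $\textbf{q}\in\Sigma\setminus W$, $\textbf{q}\neq\textbf{p}^*$, with $S'(\cdot,\textbf{q})\not\equiv F$, the pair $(\textbf{p}^*,\textbf{q})$ proves the claim. Otherwise $S'(\cdot,\textbf{r})\in\{0,F\}$ for every $\textbf{r}$; write $S'(\cdot,\textbf{r})=\epsilon(\textbf{r})F$ with $\epsilon(\textbf{r})\in\{0,1\}$. If $\epsilon$ takes both values, fix $\textbf{r}_0,\textbf{r}_1$ with $\epsilon(\textbf{r}_0)=0$, $\epsilon(\textbf{r}_1)=1$; the identity gives, for each $j$, that $\epsilon(j\textbf{r})F(z)-\tfrac{\gamma}{b}\epsilon(\textbf{r})F\big(\tfrac{z+j}{b}\big)=\tfrac1b\phi'\big(\tfrac{z+j}{b}\big)$ is independent of $\textbf{r}$, and comparing $\textbf{r}_0,\textbf{r}_1$ gives $\tfrac{\gamma}{b}F\big(\tfrac{z+j}{b}\big)=\big(\epsilon(j\textbf{r}_1)-\epsilon(j\textbf{r}_0)\big)F(z)$; the integer coefficient cannot be $\pm1$ (that would force $\|F\|_\infty\ge\tfrac{b}{|\gamma|}\|F\|_\infty$, impossible since $b/|\gamma|>1$), hence it is $0$, so $F\equiv0$ on $I_{j}$ for each $j$, i.e. $F\equiv0$ — contradiction. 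If instead $\epsilon\equiv1$, then $S'(\cdot,\textbf{p})=F$ for all $\textbf{p}$, the identity makes $\phi'+\gamma F$ invariant under $x\mapsto x+\tfrac1b$, so $F=\tfrac1b\tilde g$ and $\phi'(x)=\tilde g(bx)-\tfrac{\gamma}{b}\tilde g(x)$ with $\tilde g$ real-analytic and $1$-periodic, and $\int_0^1\phi'=0$ forces $\int_0^1\tilde g=0$; telescoping the series for $S$ then gives $S(x,\textbf{p})=\tfrac{\phi(0)}{1-\gamma}+\tfrac1b\int_0^x\tilde g$, independent of $\textbf{p}$, so $K^{\phi}_{b,\gamma}$ is the graph of a real-analytic function, contradicting (H) via Theorem~\ref{thm:dichotomy}. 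This proves the claim, hence the theorem; the two degenerate subcases just treated are the only substantive difficulty — the first collapses via the impossible inequality $\|F\|_\infty\ge\tfrac{b}{|\gamma|}\|F\|_\infty$, the second via the identity $\phi'(x)=\tilde g(bx)-\tfrac{\gamma}{b}\tilde g(x)$, which is exactly the fingerprint of the graph alternative in Theorem~\ref{thm:dichotomy} — while the remaining steps (turning (H) into non-constancy of $\phi$, the truncation estimates, and the $b$-adic bookkeeping) are routine.
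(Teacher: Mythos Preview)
Your argument is correct. The paper's own proof is essentially a citation: it asserts that Lemmas~6.1--6.2 and Theorem~6.1 of \cite{Ren2022} extend verbatim to complex $\gamma$, obtains (A.1)--(A.2) at some level $t'$, and then pads the words by $\textbf{0}_{t-t'}$ to enlarge $t$ until (A.3) holds. Since no details are reproduced here, a line-by-line comparison with your proof is not possible from this text; but your overall scheme --- find $\textbf{p}\neq\textbf{q}\in\Sigma$ and an open interval $J$ on which $S'(\cdot,\textbf{p})$, $S'(\cdot,\textbf{q})$ and their difference are all nonzero, then truncate to level $t$ and pick $I_{\textbf{a}}\subset J$ --- is the natural route and almost certainly what \cite{Ren2022} does in the real case.

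What your write-up adds is a clean, self-contained elimination of the two degenerate scenarios that (H) alone does not obviously exclude. The $\{0,F\}$-valued case collapses via the identity $\tfrac{\gamma}{b}F\bigl(\tfrac{z+j}{b}\bigr)=c_jF(z)$ with $c_j\in\{-1,0,1\}$: if $c_j=\pm1$ for some $j$ then $\|F\|_{I_j}=\tfrac{b}{|\gamma|}\|F\|_\infty$, forcing $F\equiv0$; if $c_j=0$ for all $j$ then $F$ vanishes on every $I_j$. The $\epsilon\equiv1$ case produces the cohomological equation $\phi'(x)=\tilde g(bx)-\tfrac{\gamma}{b}\tilde g(x)$, and your telescoping of the series for $S$ then shows $S(x,\textbf{p})$ is independent of $\textbf{p}$, contradicting (H) directly. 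One point worth making explicit: you use that $\tilde g=bF$ is $1$-periodic. This follows because $S'(x+1,\textbf{p})=S'(x,\textbf{p}^{+})$ for the ``successor'' sequence $\textbf{p}^{+}$ (add $1$ in base $b$ with carry, absorbed by the $\mathbb{Z}$-periodicity of $\phi$), and by hypothesis $S'(\cdot,\textbf{p}^{+})=F$ as well, so $F(x+1)=F(x)$.
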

\begin{proof}
	Since \cite[Lemma 6.1, Lemma 6.2]{Ren2022} can be extended the case $\gamma\in\mathbb{C}$ such that $0<|\gamma|<1$, thus we could prove that (A.1) and (A.2) hold for some $t'>t_0$, $\Xi_1>0$ and  $\textbf{h},\,\textbf{h}',\,\textbf{a}\in\varLambda^{t'}$ by the same method of \cite[Theorem 6.1]{Ren2022}. Therefore it suffice to choose $t>t'$ large enough to make sure (A.3) also holds and replace $\textbf{h},\,\textbf{h}',\,\textbf{a}\in\varLambda^{t'}$ by $\textbf{h}\textbf{0}_{t-t'},\,\textbf{h}'\textbf{0}_{t-t'},\,\textbf{0}_{t-t'}\textbf{a}\in\varLambda^t$.
\end{proof}
\subsection{The partitions  of the space $\varLambda^{\#}$}\label{sec:partitionX}
This subsection is devoted to construct a  sequence of partitions $\mathcal{L}_n^{\varLambda^{\#}}$ of the space $\varLambda^{\#}$ by the same method of \cite[Section.7]{Ren2022}.
Combining Theorem \ref{TheoremB} with
Theorem \ref{thmSeperation} and Theorem \ref{thmTransversality},
there exists an integer $t>0$ such that (A.3) holds, some constants $\Xi_1,\, C>0$, a point $x_0\in [0,1)$, set $M\subset\mathbb{Z}_+$ and $\textbf{a},\,\textbf{h},\,\textbf{h}'\in\varLambda^t$  with the following properties.
\begin{enumerate}
	\item[(B.1)] For each  $\textbf{w}\in\varLambda^{t}$, the sequence $(X_n^{\textbf{w},\,x_0}\,)_{n\in\mathbb{Z}_+}$
	is $(|\gamma|^{C/2}, M)$-exponential separation
	where $X_n^{\textbf{w},\,x_0}$ is from Theorem \ref{thmSeperation};
	\item[(B.2)] $dim(m_{x_0})=\alpha$;
	\item[(B.3)]For every $z\in I_{\textbf{a}}$ and $\textbf{i},\,\textbf{j}\in \varLambda^{\#}$, if $\textbf{h}<\textbf{i}$, 	$\textbf{h}'<\textbf{j}$ then (A.1), (A.2) hold;
	\item[(B.4)] $\# M=\infty.$
\end{enumerate}
 We also  fix such elements $\{t,\,x_0,\,C,\, \Xi_1,\, M,\,\textbf{a},\,\textbf{h},\textbf{h}' \}$.
Let  $\overline{\pi}:\varLambda^{\#}\to\mathbb{N}\times\mathbb{C}^{3}$ be the map such that
$$\textbf{w}\mapsto\bigg(|\textbf{w}|,\,S(\textbf{w}(x_0),\textbf{h}),\,S(\textbf{w}(x_0),\textbf{h}'),\,S(x_0,\textbf{w})\bigg).$$
\begin{definition} For any integer $n\ge 1$, let 
	$\mathcal{L}_n^{\varLambda^{\#}}$ be the union of all the non-empty subsets of $\varLambda^{\#}$ of the following form
	$$\overline{\pi}^{\,-1}\left(\{m\}\times I_1 \times I_2  \times J\right),$$
	where $m\in \mathbb{N}, \,I_1,\,I_2\in \mathcal{L}_n,\, J\in \mathcal{L}^{\mathbb{C}}_{n+[m\log_b 1/|\gamma|]}.$
	The partition $\mathcal{L}_0^{\varLambda^{\#}}$ consists of non-empty subsets of $\varLambda^{\#}$ of the following form
	$$\overline{\pi}^{\,-1}\left(\{m\}\times\mathbb{C}\times\mathbb{C}\times J\right),$$
	where $m\in \mathbb{N}, J\in \mathcal{L}^{\mathbb{C}}_{[m\log_b 1/|\gamma|]}.$
\end{definition}
\subsection{The proof of Theorem A}
We shall recall the following notations from \cite{Ren2022}.  For any probability measure $\xi\in\mathscr{P}(\varLambda^{\#})$ and $\textbf{u}\in\varLambda^{\#}$, define the probability measure  $A_{\textbf{u}}(\xi)\in\mathscr{P}(\mathbb{C})$ be such that
\begin{equation}\label{measure}
A_{\textbf{u}}(\xi)=\sum_{\textbf{w}\in\text{supp}(\xi)}\xi(\{\textbf{w}\})\,\delta_{S(x_0,\textbf{w}\,\textbf{u})}.
\end{equation}
Let $B_{\textbf{q}}(\xi)\in\mathscr{P}(\mathbb{C})$ be the measure such that for any set $A\in\mathscr{B}(\mathbb{C})$, 
\begin{equation}\label{measure2}
B_{\textbf{q}}(\xi)(A)=\xi\times\nu^{\mathbb{Z}_+}\big(\,\big\{\,(\textbf{w},\, \textbf{j})\in\varLambda^{\#}\times \Sigma: S(x_0,\textbf{w}\,\textbf{q}\,\textbf{j})\in A\big\}\,\big).
\end{equation}
We also define the discrete measure on $\varLambda^{\#}$
\begin{equation}\label{measure3}
\theta^{\,\textbf{u}}_n:=\frac1{b^{\hat{n}-t}}\sum_{\textbf{w}\in\varLambda^{\hat{n}-t}}\delta_{\textbf{w}\textbf{u}}.
\end{equation}
\begin{proof}[The proof of Theorem A]
	\cite[Lemma 8.1, corollary 8.1 and Lemma 8.3]{Ren2022} can 
	be extended to the case $\gamma\in\mathbb{C}$ such that $0<|\gamma|<1$ by the same methods. Also by (B.1), (B.2) and (B.3), the proofs of  \cite[Lemma 7.1, Lemma 7.2, and Lemma 8.2, Lemma 8.4, Lemma 8.5 ]{Ren2022} also work for  the case $\gamma\in\mathbb{C}$ such that $0<|\gamma|<1$. Combining these with Theorem \ref{thm:entropygrowth}, we extend the conclusion of \cite[Lemma 8.6]{Ren2022} to the case $\gamma\in\mathbb{C}$  if $\alpha<\min\{2,\frac{\log b}{\log1/|\gamma|}\}$.
	Assume $\alpha<\min\{2,\frac{\log b}{\log1/|\gamma|}\}$,
	then there is a contradiction as we did in \cite{Ren2022}.
\end{proof}

\bibliographystyle{plain}             

\begin{thebibliography}{1}
	\bibitem{baranski2014dimension}
	K. Bara{\'n}ski, B. B{\'a}r{\'a}ny, and J. Romanowska.
	\newblock On the dimension of the graph of the classical weierstrass function.
	\newblock {\em Adv. Math.}, 265, 32--59, 2014.
	
	\bibitem{barany2019hausdorff}
	B. B{\'a}r{\'a}ny, M. Hochman, and A. Rapaport.
	\newblock Hausdorff dimension of planar self-affine sets and measures.
	\newblock {\em Invent. Math.}, 216(3):601--659, 2019.
	
	
	
	
	
	
	
	\bibitem{hochman2014self}
	M. Hochman.
	\newblock On self-similar sets with overlaps and inverse theorems for entropy.
	\newblock {\em Ann. Math.}, 773--822, 2014.
	
	
	
	\bibitem{ledrappier1992dimension}
	F. Ledrappier.
	\newblock On the dimension of some graphs.
	\newblock {\em Contemp. Math.}, 135, 285--293, 1992.
	
	\bibitem{ledrappier1985metric}
	F. Ledrappier and L.S. Young.
	\newblock The metric entropy of diffeomorphisms: Part ii: Relations between
	entropy, exponents and dimension.
	\newblock {\em Ann. Math.}, 540--574, 1985.
	
	\bibitem{PS}
	Y. Peres and B. Solomyak.
	\newblock Absolute continuity of Bernoulli convolutions, a simple proof.
	\newblock {\em Math. Res. Lett}, 3, 231-C239., 1996.
	
	
	\bibitem{shen2018hausdorff}
	W. Shen.
	\newblock Hausdorff dimension of the graphs of the classical weierstrass
	functions.
	\newblock {\em Math. Z.}, 289(1-2), 223--266, 2018.
	
	\bibitem{Solo}
	B. Solomyak.
	\newblock On the random series $\sum \pm \lambda^n$ (an Erd\"os problem).
	\newblock {\em Ann. Math}, 142, 611-C625, 1995.
	
	\bibitem{tsujii2001fat}
	M. Tsujii.
	\newblock Fat solenoidal attractors.
	\newblock {\em Nonlinearity}, 14(5), 1011--1027, 2001.
	
	\bibitem{Young}
	L. S. Young.
	\newblock Dimension, entropy and Lyapunov exponents.
	\newblock {\em Ergodic Theory Dyn. Syst.}, 2(1):109--124, 1982.
	
	\bibitem{ren2021dichotomy}
	 H. Ren. and W. Shen:
	\newblock A Dichotomy for the Weierstrass-type functions.
	\newblock {\em Invent. Math.},226(3),1057-1100 (2021)
	
	\bibitem{Alexander1984fat}
	J.C. Alexander., and J. A. Yorke:
	\newblock Fat Baker's transformations.
	\newblock {\em Ergodic Theory Dyn. Syst.},4,1-23 (1984)
	
	\bibitem{Varju1}
	P. Varj{\'u}.
	\newblock
	On the dimension of Bernoulli convolutions for all transcendental parameters. 
	\newblock{\em Ann. Math}, 189(3): 1001-1010, 2019.
	
	
	\bibitem{Varju2}
	P. Varj{\'u}.
	\newblock
	Absolute continuity of Bernoulli convolutions for algebraic parameters. 
	\newblock{\em J. Amer. Math. Soc.}, 32(2): 351-397, 2019.
	
	
	
	\bibitem{Zhang2018}
	Z. Zhang.
	\newblock
	On the smooth dependence of SRB measures for partially hyperbolic systems. 
	\newblock{\em Commun. Math. Phys.}, 358: 45-79, 2018.
	
	\bibitem{Rams}
	M. Rams.
	\newblock
	Absolute continuity for the SBR measure for non-linear fat baker maps. 
	\newblock{\em Nonlinearity}, 16: 1649-1655, 2003.
	
	\bibitem{Hochman2022}
	M. Hochman., and A. Rapaport.
	\newblock
	Hausdorff dimension of planar self-affine sets with overlaps. 
	\newblock{\em J. Eur. Math. Soc.}, 7: 2361-2441, 2022.
	
	\bibitem{gao2022}
	R. Gao, and W. Shen.
	\newblock
	Low complexity of optimizing measures over an expanding circle map. 
	\newblock{\em preprint, Math. ArXiv}, 2206.05467V1.
	
	\bibitem{Shu2010}
	L. Shu.
	\newblock
	Dimension theory for invariant measures of endomorphisms. 
	\newblock{\em Comm. Math. Phys.}, 1: 65-99, 2010.
	
	
	\bibitem{Fan2002}
	A.H. Fan, K.S.Lau. and H. Rao.
	\newblock
	Relationships between different dimensions of a measure. 
	\newblock{\em Monatsh. Math.}, 135(3): 191-201, 2002.
	
	
	\bibitem{rokhlin1949fundamental}
	V. A. Rokhlin.
	\newblock
	On the fundamental ideas of measure theory. 
	\newblock{Trans. Amer. Math. Soc\em }, 10: 1-52, 1962.
	
	\bibitem{Ren2022}
	H. Ren.
	\newblock
	A Dichotomy for the dimension of SRB measure.
	\newblock{\em preprint, Math. ArXiv}, 2208.04576V1.

	\bibitem{GTM79}
	P. Walters.
	\newblock
	An introduction to Ergodic Theory.
	\newblock{\em GTM, 79. Springer, Berlin-New York}, 1982.
	
	\bibitem{Furstenberg}
	H. Furstenberg.
	\newblock
	Ergodic fractal measures and dimension conservation. 
	\newblock{\em Ergodic Theory Dyn. Syst.}, 2: 405-422, 2008.
	
	\bibitem{Feng}
	D. Feng, and H. Hu.
	\newblock
	Dimension theory of iterated function systems. 
	\newblock{\em Comm. Pure Appl. Math.}, 62(11): 1435-1500, 2009.
	
	\bibitem{Falconer}
	K. Falconer, and H. Hu.
	\newblock
	 Exact dimensionality and projections of random self-similar measures and sets.
	\newblock{\em J. Lon. Math. Soc.}, 90(2): 388-412, 2014.
	
	
	\bibitem{hochman2021}
	M. Hochman.
	\newblock On self-similar sets with overlaps and inverse theorems for entropy in $\mathbb{R}^d$.
	\newblock {\em Mem. Amer. Math. Soc.}, 265, no.1287, 2021.
	
	\bibitem{Parry}
	W. Parry.
	\newblock
	Skew products of shifts with a compact Lie group.
	\newblock{\em J. Lon. Math. Soc.}, 56(2): 395-404, 1997.
	
	\bibitem{Maker}
    P. T. Maker.
	\newblock
	The ergodic theorem for a sequence of functions.
	\newblock{\em Duke Math. J.}, 6: 27-30, 1940.
	
	
	\bibitem{Schmeling}
	B. Hasselblatt  and J.  Schmeling.
	\newblock
	 Dimension product structure of hyperbolic sets.
	\newblock{\em Modern Dynamical
		Systems and Applications. Eds. B. Hasselblatt, M. Brin and Y. Pesin. Cambridge University Press,
		New York}, 2004,pp.331-345.
	
	\bibitem{Simon}
	K.  Simon.
	\newblock
	Hausdorff dimension for non-invertible maps.
	\newblock{\em Ergod. Theor. Dynam. Syst.}, 13: 199-212, 1993.
	
	
		
	\bibitem{Simon2}
	K.  Simon.
	\newblock
	 The Hausdorff dimension of the Smale–Williams solenoid with different contraction
	coefficients.
	\newblock{\em Proc. Am. Math. Soc.}, 125: 1221-8, 1997.
	
		
	\bibitem{Przytycki}
	R. Mohammadpour, F. Przytycki  and M. Rams.
	\newblock
	Hausdorff and packing dimensions and
	measures for nonlinear transversally
	non-conformal thin solenoids.
	\newblock{\em Ergod. Theor. Dynam. Syst.}, 11: 3458-3489, 2002.
	
	\bibitem{Silva}
	R. Bortolotti and E. Silva.
	\newblock
	Hausdorff dimension of thin
	higher-dimensional solenoidal attractors.
	\newblock{\em Nonlinearity}, 6: 3261-3282, 2022.
	
	\bibitem{Simon2}
	K. Simon and B. Solomyak.
	\newblock
	Hausdorff dimension for horseshoes in $\mathbb{R}^3$.
	\newblock{\em Ergod. Theor. Dynam. Syst.}, 5: 1343-1363, 1999.
	
	\bibitem{Mihailescu}
	E. Mihailescu and M. Urba{\'n}ski.
	\newblock
	Transversal families of hyperbolic skew-products.
	\newblock{\em Discrete Contin. Dynam. Syst.}, A 21: 907-928, 2008.
	
	\bibitem{Rams2}
	M. Rams and K. Simon.
	\newblock
	Hausdorff and packing measure for solenoids.
	\newblock{\em Ergod. Theor. Dynam. Syst.}, 23: 273-292, 2003.
	
	\bibitem{Rams3}
	M. Rams.
	\newblock
    Hausdorff and packing measure for thick solenoids.
	\newblock{\em  Studia Math.}, 163. no.3, : 193-202, 2004.
\end{thebibliography}

\end{document}